\numberwithin{equation}{section}
\numberwithin{figure}{section}
\theoremstyle{plain}
\newtheorem{thm}{\protect\theoremname}[section]
  \theoremstyle{definition}
  \newtheorem{defn}[thm]{\protect\definitionname}
  \theoremstyle{remark}
  \newtheorem{rem}[thm]{\protect\remarkname}
\newcounter{mainthm}
\theoremstyle{plain}
\newtheorem{main_thm}[mainthm]{Main Theorem}
  \theoremstyle{plain}
  \newtheorem{cor}[thm]{\protect\corollaryname}
  \theoremstyle{plain}
  \newtheorem{lem}[thm]{\protect\lemmaname}
  \theoremstyle{plain}
  \newtheorem{fact}[thm]{\protect\factname}
  \theoremstyle{remark}
  \newtheorem{claim}[thm]{\protect\claimname}
  \theoremstyle{definition}
  \newtheorem{example}[thm]{\protect\examplename}
  \theoremstyle{plain}
  \newtheorem{prop}[thm]{\protect\propositionname}
  \newcounter{casectr}
  \newenvironment{caseenv}
  {\begin{list}{{\itshape\ \protect\casename} \arabic{casectr}.}{%
   \setlength{\leftmargin}{\labelwidth}
   \addtolength{\leftmargin}{\parskip}
   \setlength{\itemindent}{\listparindent}
   \setlength{\itemsep}{\medskipamount}
   \setlength{\topsep}{\itemsep}}
   \setcounter{casectr}{0}
   \usecounter{casectr}}
  {\end{list}}
  \theoremstyle{plain}
  \newtheorem*{prop*}{\protect\propositionname}
\let\cite@rig\cite
\newcommand{\b@xcite}[2][\%]{\def\def@pt{\%}\def\pas@pt{#1}
  \mbox{\ifx\def@pt\pas@pt\cite@rig{#2}\else\cite@rig[#1]{#2}\fi}}
\renewcommand{\underbar}[1]{{\let\cite\b@xcite\uline{#1}}}
  \providecommand{\casename}{Case}
  \providecommand{\claimname}{Claim}
  \providecommand{\corollaryname}{Corollary}
  \providecommand{\definitionname}{Definition}
  \providecommand{\examplename}{Example}
  \providecommand{\factname}{Fact}
  \providecommand{\lemmaname}{Lemma}
  \providecommand{\propositionname}{Proposition}
  \providecommand{\remarkname}{Remark}
\providecommand{\theoremname}{Theorem}
\begin{document}
\global\long\def\Aut{\operatorname{Aut}}
\global\long\def\id{\operatorname{id}}
\global\long\def\supp{\operatorname{supp}}
\global\long\def\Inn{\operatorname{Inn}}
\global\long\def\Ff{\mathbb{F}}
\global\long\def\Zz{\mathbb{Z}}
\global\long\def\Cc{\mathbb{C}}
\global\long\def\Ll{\mathbb{L}}
\global\long\def\Vv{\mathbb{V}}
\global\long\def\Qq{\mathbb{Q}}
\global\long\def\phigh{p\mbox{-high}}
\global\long\def\nlg{\operatorname{nlg}}
\global\long\def\nor{\operatorname{nor}}
\global\long\def\ord{\mathbf{ord}}
\global\long\def\leftexp#1#2{{\vphantom{#2}}^{#1}{#2}}
\global\long\def\isp#1{{\left[{#1}\right]}}

\title{Automorphism towers and automorphism groups of fields without Choice}

\dedicatory{Dedicated to Professor R\"udiger G\"obel on his 70th birthday}

\author{Itay Kaplan and Saharon Shelah}

\thanks{The second author would like to thank the United States-Israel Binational
Science Foundation for partial support of this research. Publication
913.}
\begin{abstract}
This paper can be viewed as a continuation of \cite{Sh882} that dealt
with the automorphism tower problem without Choice. Here we deal with
the inequality $\tau_{\kappa}^{\nlg}\leq\tau_{\kappa}$ without Choice
and introduce a new proof to a theorem of Fried and Koll\'ar that
any group can be represented as an automorphism group of a field.
The proof uses a simple construction: working more in graph theory,
and less in algebra. 
\end{abstract}
\maketitle

\section{introduction and preliminaries}

\subsection*{Background}

Although this paper hardly mentions automorphism towers, it is the
main motivation for it. So we shall start by giving the story behind
them.

Given any centerless group $G$, $G\cong\Inn\left(G\right)\leq\Aut\left(G\right)$
so we can embed $G$ into its automorphism group. Also, an easy exercise
shows that $\Aut\left(G\right)$ is also without center, so we can
do this again, and again:
\begin{defn}
\label{TauG}For a centerless group $G$, we define \emph{the automorphism
tower} $\left\langle G^{\alpha}\left|\,\alpha\in\mathbf{ord}\right.\right\rangle $
by 
\begin{itemize}
\item $G^{0}=G$. 
\item $G^{\alpha+1}=\Aut\left(G^{\alpha}\right)$. 
\item $G^{\delta}=\cup\left\{ G^{\alpha}\left|\,\alpha<\delta\right.\right\} $
for $\delta$ limit. 
\end{itemize}
\end{defn}
\begin{rem}
The union in limit stages can be understood as the direct limit. But
we shall think of the tower as an increasing continuous sequence of
groups.
\end{rem}
The natural question that arises, is whether this process stabilizes,
and when. We define
\begin{defn}
For such a group, define $\tau_{G}=\min\left\{ \alpha\left|\, G^{\alpha+1}=G^{\alpha}\right.\right\} $. 
\end{defn}
In 1939, Weilandt proved in \cite{Wielandt} that for finite $G$,
$\tau_{G}$ is finite. What about infinite $G$? There exist examples
of centerless infinite groups such that this process does not stop
in any finite stage. For example --- the infinite dihedral group $D_{\infty}=\left\langle x,y\left|\, x^{2}=y^{2}=1\right.\right\rangle $
satisfies $\Aut\left(D_{\infty}\right)\cong D_{\infty}$ while the
automorphism replacing $x$ with $y$ is not in $\Inn\left(D_{\infty}\right)$.
The question remained open until the works of Faber \cite{Faber}
and Thomas \cite{ThomasI,ThomasII} (who was not aware of Faber's
work), that showed $\tau_{G}<\left(2^{\left|G\right|}\right)^{+}$.
\begin{defn}
For a cardinal $\kappa$ we define $\tau_{\kappa}$ as the smallest
ordinal such that $\tau_{\kappa}>\tau_{G}$ for all centerless groups
$G$ of cardinality $\leq\kappa$, or in other words 
\[
\tau_{\kappa}=\bigcup\left\{ \tau_{G}+1\left|\, G\mbox{ is centerless and }\left|G\right|\leq\kappa\right.\right\} .
\]

\end{defn}
Since $\left(2^{\kappa}\right)^{+}$ is regular we can immediately
conclude $\tau_{\kappa}<\left(2^{\kappa}\right)^{+}$. 

This paper is concerned with a Choiceless universe, i.e. we do not
assume the axiom of Choice. As a consequence, the previous definition
is generalized to
\begin{defn}
For a set $k$, we define $\tau_{\left|k\right|}$ to be the smallest
ordinal $\alpha$ such that $\alpha>\tau_{G}$ for all groups $G$
with power $\leq\left|k\right|$.
\end{defn}
Note that when we write $\left|X\right|\leq\left|Y\right|$ as in
the definition above, we mean that there is an injective  function
from $X$ to $Y$. Below we provide a short glossary.

A helpful and close notion is that of \emph{the normalizer tower}
$\left\langle \nor_{G}^{\alpha}\left(H\right)\left|\,\alpha\in\ord\right.\right\rangle $
of a subgroup $H$ of $G$ in $G$.
\begin{defn}
Let
\begin{itemize}
\item $\nor_{G}^{0}\left(H\right)=H$. 
\item $\nor_{G}^{\alpha+1}\left(H\right)=\nor_{G}\left(\nor_{G}^{\alpha}\left(H\right)\right)$. 
\item $\nor_{G}^{\delta}\left(H\right)=\bigcup\left\{ \nor_{G}^{\alpha}\left(H\right)\left|\,\alpha<\delta\right.\right\} $
for $\delta$ limit. 
\end{itemize}
And we let the normalizer length be $\tau_{G,H}^{\nlg}=\min\left\{ \alpha\left|\,\nor_{G}^{\alpha+1}\left(H\right)=\nor_{G}^{\alpha}\left(H\right)\right.\right\} $
(sometimes we just write $\tau_{G,H}$). 
\end{defn}
Analogously to $\tau_{\kappa}$, we define
\begin{defn}
\label{TauNLG}For a cardinal $\kappa$, let $\tau_{\kappa}^{\nlg}$
be the smallest ordinal such that $\tau_{\kappa}^{\nlg}>\tau_{\Aut\left(\mathfrak{A}\right),H}$,
for every structure $\mathfrak{A}$ of cardinality $\leq\kappa$ and
$H\leq\Aut\left(\mathfrak{A}\right)$ of cardinality $\leq\kappa$. 

In general (i.e. without assuming Choice), for a set $k$, we define
$\tau_{\left|k\right|}^{\nlg}$ as the smallest ordinal $\alpha$,
such that for every structure $\mathfrak{A}$ of power $\left|\left|\mathfrak{A}\right|\right|\leq\left|k\right|$,
$\tau_{\Aut\left(\mathfrak{A}\right),H}<\alpha$ for every subgroup
$H\leq\Aut\left(\mathfrak{A}\right)=G$ of power $\left|H\right|\leq\left|k\right|$.
In other words, $\tau_{\left|k\right|}^{\nlg}=\sup\left\{ \tau_{G,H}+1\left|\,\mbox{ for such }G,H\right.\right\} $.
\end{defn}
In \cite[Lemma 1.8]{JuShTh}, Just, Shelah and Thomas proved the following
inequality 
\[
\tau_{\kappa}\geq\tau_{\kappa}^{\nlg}.
\]
In fact it was essentially already proved by Thomas in \cite{ThomasI}. 

In \cite{Sh882} we dealt with an upper bound of $\tau_{\kappa}$
without assuming Choice. Here we prove $\tau_{\kappa}\geq\tau_{\kappa}^{\nlg}$
without Choice, and also provide a Choiceless variant of $\tau_{\left|k\right|}\geq\tau_{\left|k\right|}^{\nlg}$. 

It is worth mentioning some previous results regarding $\tau_{\kappa}$
that were proved using this inequality. 

In \cite{ThomasI}, Thomas proved that $\tau_{\kappa}\geq\kappa^{+}$.
It is a easy to conclude from Main Theorem \ref{thm:MainThm} below
that this result still holds without Choice. We will elaborate in
the end of this section (See Corollary \ref{cor:Lower Bound for Tau_kappa}). 

In \cite{JuShTh} the authors found that for uncountable $\kappa$
one cannot find an explicit upper bound for $\tau_{\kappa}$ better
than $\left(2^{\kappa}\right)^{+}$ in $ZFC$ (using set theoretic
forcing). In \cite{Sh810}, Shelah proved that if $\kappa$ is strong
limit singular of uncountable cofinality then $\tau_{\kappa}>2^{\kappa}$
(using results from PCF theory). In the proofs the authors construct
normalizer towers to find lower bound for $\tau_{\kappa}$, but we
did not check how much Choice was used. 

It remains an open question whether or not there exists a countable
centerless group $G$ such that $\tau_{G}\geq\omega_{1}$.

\subsection*{Description of paper}

As mentioned before, we wish to prove $\tau_{\kappa}\geq\tau_{\kappa}^{\nlg}$
without Choice. So we started by reading what was done in \cite{JuShTh}
(which is also described in detail in \cite{ThomasBook}).

The proof contains three parts:
\begin{enumerate}
\item \label{enu:StrToGrph}Given some structure, code it in a graph (i.e.
find a graph with the same cardinality and automorphism group). 
\item \label{enu:GrphToFld}Given a graph code it in a field. Now we have
a field $K$ with some subgroup $H\leq\Aut\left(K\right)$ such that
$\left|K\right|=\left|H\right|=\kappa$. 
\item \label{enu:Final}Use some lemmas from group theory and properties
of $PSL\left(2,K\right)$ to find a centerless group whose automorphism
tower coincides with the normalizer tower of $H$ in $\Aut\left(K\right)$. 
\end{enumerate}
Our first intention was to mimic this proof, and to prove some version
of $\tau_{\left|k\right|}\geq\tau_{\left|k\right|}^{\nlg}$ (see definitions
\ref{TauNLG} and \ref{TauG} above). To explain what we did prove,
we need some notation:
\begin{defn}
\label{def:TransX-1}Let $X$ be a set. 
\begin{enumerate}
\item $X^{<\omega}$ is the set of all finite sequences of members of $X$. 
\item $\left[X\right]^{<\aleph_{0}}=\left\{ a\subseteq X\left|\,\left|a\right|<\aleph_{0}\right.\right\} $.
\item $X^{\left\langle <\omega\right\rangle }=\left[X^{<\omega}\right]^{<\aleph_{0}}$,
i.e. the set of all finite subsets of finite sequences of elements
of $X$. 
\end{enumerate}
\end{defn}
Our methods cannot tackle $\tau_{\left|k\right|}\geq\tau_{\left|k\right|}^{\nlg}$
without Choice, since one often needs to code finite sequences. The
natural way to overcome this is to replace $k$ with $k^{<\omega}$,
so that we get $\tau_{\left|k^{<\omega}\right|}\geq\tau_{\left|k^{<\omega}\right|}^{\nlg}$.
However, we managed to proved a slightly different version:
\begin{main_thm}
\label{thm:MainThm}For any set $k$, $\tau_{\left|k^{\left\langle <\omega\right\rangle }\right|}\geq\tau_{\left|k^{\left\langle <\omega\right\rangle }\right|}^{\nlg'}$. 
\end{main_thm}
Where $\tau_{\left|k\right|}^{\nlg'}$ is a variant of $\tau_{\left|k\right|}^{\nlg}$.
See Definition \ref{def:Tau_Nlg1} below. 

With Choice there is no difference, and moreover, we get as a corollary
the original inequality for a cardinal $\kappa$ (see Corollary \ref{cor:CardMainThm}
below). It is a matter of taste whether replacing $k^{<\omega}$ and
$\nlg$ by $k^{\left\langle <\omega\right\rangle }$ and $\nlg'$
matters. Still, one may ask whether $\tau_{\left|k^{<\omega}\right|}^{\nlg}\leq\tau_{\left|k^{<\omega}\right|}$
or even $\tau_{\left|k\right|}\geq\tau_{\left|k\right|}^{\nlg}$ holds
without Choice.

Part (\ref{enu:StrToGrph}) was easy enough. However, it needs a passage
to a structure with countable language. This stage uses Choice. In
order to fix this, we just bypassed the problem all together and replaced
$\tau_{\left|k\right|}^{\nlg}$ by $\tau_{\left|k\right|}^{\nlg'}$.

Part (\ref{enu:Final}) was easy as well: An algebraic lemma which
obviously did not need Choice (Lemma \ref{lem:SimpleS}); And two
lemmas regarding $PSL\left(2,K\right)$ --- Lemma \ref{lem:PSLSimple}
and Lemma \ref{lem:(Van-der-Waerden)}. The latter is a theorem of
Van der Waerden and Schreier which described $\Aut\left(PSL\left(2,K\right)\right)$.
There is a simple model theoretic argument that shows that these lemmas
do not require Choice (Lemma \ref{lem:ModelTheoreticChoice}).

However, part (\ref{enu:GrphToFld}) seemed to be somewhat harder.
In \cite{JuShTh}, the authors referred to the work of Fried and Koll\'ar
\cite{FriedKollar}. In \cite{ThomasBook}, the author gives a less
technical proof that the construction in \cite{FriedKollar} works.
The proof, in both cases, was a little bit complicated, and we were
suspicious that Choice was used in it. After some time we realized
that it is most likely not used, but by then we already came up with
a proof of our own, in which the construction of the field is much
simpler, and thought that it is worth presenting. So, for part (\ref{enu:GrphToFld})
we prove:
\begin{main_thm}
\label{MainThm:Graphs2Fields}Let $\Gamma=\left\langle X,E\right\rangle $
be a connected graph. Then for any choice of characteristic there
exists a field $K_{\Gamma}$ of that characteristic such that $\left|K_{\Gamma}\right|\leq\left|X^{\left\langle <\omega\right\rangle }\right|$
and $\Aut\left(K_{\Gamma}\right)\cong\Aut\left(\Gamma\right)$. 
\end{main_thm}
The proof of Main Theorem \ref{MainThm:Graphs2Fields} is given in
Section \ref{sec:GraphsToFields}. Here we will give a brief outline
of the construction.

The plan was this: work a little bit on the graph, so that the algebra
would be easier. First code the given graph as a graph with the following
properties: its edges are colorable with some finite number $N$ of
colors, and the subgraphs induced by any particular color is a union
of disjoint stars. This is done in Lemma \ref{lem:Graph-color}. 

Now the construction of the field is as follows: first let $\left\langle p_{0},p_{1},\ldots,p_{N}\right\rangle $
be a list of distinct odd primes. Start with $\mathbb{Q}$ (or any
prime field), and add the set of vertices  $X$ as transcendental
elements over it. For each one, add $p_{0}^{n}$ roots to it for all
$n<\omega$. Now, for each edge, $e=\left\{ s,t\right\} $, colored
with the color $l<N$, adjoin $p_{l+1}^{n}$ roots for all $n<\omega$
to $\left(s+t+1\right)$. This is it. The reader is invited to compare
to \cite{FriedKollar}.

This construction can be done without Choice.

In the proof we use a generalized form of a lemma by P. Pr\"ohle
that appears in \cite{prohleEndomorphism}. In their original paper,
Fried and Koll\'ar could construct $K_{\Gamma}$ with the restriction
that $char\left(K_{\Gamma}\right)\neq2$ and Prohle removed this restriction.
His {}``third lemma'' from \cite{prohleEndomorphism} seemed to
be perfect for our situation. However, we needed to generalize it
in order to suit our purposes (and prove the generalization). This
is Lemma \ref{lem:MainLemma}. The proof of Lemma \ref{lem:MainLemma}
is similar to the one in \cite{prohleEndomorphism} and can be found
in Section \ref{sec:some-technical-lemmas}.

\subsection*{Acknowledgment}

We would like to thank the referee for many useful remarks and to
Haran Pilpel for drawing a graph with certain properties in record
time.

\subsection*{A note about reading this paper}

If the reader is not interested in Choice, but still wants to see
the proof of Main Theorem \ref{thm:MainThm} and Main Theorem \ref{MainThm:Graphs2Fields},
he should ignore all the computations of cardinalities, since they
become trivial. Also, with Choice, the construction of the field is
somewhat easier --- in our construction, we took the polynomial ring
$\Qq\left[Y\right]$ (where $Y$ is a set containing the vertices)
and then the quotient by an ideal. Then we had to show the ideal is
prime in order to take the field of fractions. But with Choice we
can construct the field by adding roots from the algebraic closure.
See also Remark \ref{rem:Why Choice is needed}.

\subsection*{A small glossary}
\begin{itemize}
\item $\left|X\right|\leq\left|Y\right|$ means: There is an injective function
from $X$ to $Y$. 
\item $\left|X\right|=\left|Y\right|$ means: There is a bijection from
$X$ onto $Y$. 
\item For a structure $\mathfrak{A}$, $\left|\mathfrak{A}\right|$ is its
universe and $\left|\left|\mathfrak{A}\right|\right|$ is its cardinality.
\item $\Vv$ is the universe and $\Ll$ is G\"odel's constructible universe.
\end{itemize}

\section{A variant of $\tau_{\left|k\right|}^{\nlg}$ and some corollaries
of Main Theorem \ref{thm:MainThm}}
\begin{defn}
\label{def:Rigid}A structure $\mathfrak{A}$ is called rigid if $\Aut\left(\mathfrak{A}\right)=1$,
i.e. it has no non-trivial automorphism. 
\end{defn}

\begin{defn}
\label{def:Tau_Nlg1}For a set $k$, we define $\tau_{\left|k\right|}^{\nlg'}$
as the smallest ordinal $\alpha$ which is greater than $\tau_{\Aut\left(\mathfrak{A}\right),H}$
where $\mathfrak{A},H$ are as in Definition \ref{TauNLG} and in
addition the vocabulary (language) $L$ of $\mathfrak{A}$ satisfies
\begin{enumerate}
\item There is some rigid structure with universe $L$ and a countable vocabulary
(for instance, $L$ is well-orderable); and 
\item $\left|L\right|\leq\left|\left|\mathfrak{A}\right|^{\left\langle <\omega\right\rangle }\right|$. 
\end{enumerate}
\end{defn}
\begin{rem}
\label{rem: Tau =00003D Tau '}If $\kappa$ is a cardinal number (i.e.
an $\aleph$), then $\tau_{\kappa}^{\nlg}=\tau_{\left|\kappa^{\left\langle <\omega\right\rangle }\right|}^{\nlg'}$
and $\tau_{\kappa}=\tau_{\left|\kappa^{\left\langle <\omega\right\rangle }\right|}$.
This is true since $\left|\kappa^{\left\langle <\omega\right\rangle }\right|=\left|\kappa\right|$,
and because given any $\mathfrak{A}$ as in the definition, we may
assume that $\left|\mathfrak{A}\right|\subseteq\kappa$ and that $L$
is $\left|\mathfrak{A}\right|^{<\omega}\subseteq\kappa^{<\omega}$
which is well-orderable (see \cite[Observation 2.3]{Sh882}). 
\end{rem}
Hence, by Main Theorem \ref{thm:MainThm}
\begin{cor}
\label{cor:CardMainThm}($ZF$) For a cardinal $\kappa$, $\tau_{\kappa}^{\nlg}\leq\tau_{\kappa}$. 
\end{cor}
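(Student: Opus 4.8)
The plan is to derive Corollary \ref{cor:CardMainThm} from Main Theorem \ref{thm:MainThm} by a pure bookkeeping argument, using Remark \ref{rem: Tau =00003D Tau '} to translate between the unindexed-cardinal notation and the $k^{\left\langle <\omega\right\rangle }$ notation. Concretely, fix a cardinal $\kappa$ (an $\aleph$). The first step is to invoke Remark \ref{rem: Tau =00003D Tau '}, which gives the two identifications $\tau_{\kappa}^{\nlg}=\tau_{\left|\kappa^{\left\langle <\omega\right\rangle }\right|}^{\nlg'}$ and $\tau_{\kappa}=\tau_{\left|\kappa^{\left\langle <\omega\right\rangle }\right|}$. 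The second step is to apply Main Theorem \ref{thm:MainThm} with the set $k$ taken to be $\kappa$ itself, yielding $\tau_{\left|\kappa^{\left\langle <\omega\right\rangle }\right|}\geq\tau_{\left|\kappa^{\left\langle <\omega\right\rangle }\right|}^{\nlg'}$. Chaining these three equalities/inequality gives $\tau_{\kappa}=\tau_{\left|\kappa^{\left\langle <\omega\right\rangle }\right|}\geq\tau_{\left|\kappa^{\left\langle <\omega\right\rangle }\right|}^{\nlg'}=\tau_{\kappa}^{\nlg}$, which is exactly the claim.

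The only real content to verify is that Remark \ref{rem: Tau =00003D Tau '} is legitimately applicable, i.e. that the remark's assertions hold. For the cardinality identity $\left|\kappa^{\left\langle <\omega\right\rangle }\right|=\left|\kappa\right|$ one uses that $\kappa$ is infinite and well-ordered, so $\kappa^{<\omega}$ and hence $[\kappa^{<\omega}]^{<\aleph_0}$ have cardinality $\kappa$ (finite sequences and finite subsets of a fixed infinite well-ordered set); this requires no Choice beyond what is available for well-ordered sets. For the equality of the $\tau$'s one must see that every structure $\mathfrak{A}$ of cardinality $\leq\kappa$ relevant to $\tau_{\kappa}$ (resp. $\tau_{\kappa}^{\nlg}$) may be replaced by an isomorphic copy with universe a subset of $\kappa$, and with language $L=|\mathfrak{A}|^{<\omega}\subseteq\kappa^{<\omega}$, which is well-orderable; a well-orderable set carries a rigid structure in a countable vocabulary (e.g. its linear order), and $|L|=|\,|\mathfrak{A}|^{<\omega}|\le|\,|\mathfrak{A}|^{\left\langle<\omega\right\rangle}|$, so the extra hypotheses (1) and (2) of Definition \ref{def:Tau_Nlg1} are automatic in this setting. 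Hence $\nlg'$ collapses to $\nlg$ over well-orderable-language structures, and symmetrically for $\tau$ versus $\tau_{|k^{\left\langle<\omega\right\rangle}|}$; the cited \cite[Observation 2.3]{Sh882} supplies the details.

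I do not expect any genuine obstacle here: the corollary is a corollary precisely because, once Choice is present (or more weakly, once the index set is a well-ordered cardinal), the distinctions between $k^{<\omega}$, $k^{\left\langle<\omega\right\rangle}$, $\nlg$ and $\nlg'$ that were forced on us in the Choiceless setting all wash out. The mild care needed is only to make sure that the substitution $k:=\kappa$ in Main Theorem \ref{thm:MainThm} is admissible (it is, since the theorem is stated for an arbitrary set $k$) and that the two clauses in Definition \ref{def:Tau_Nlg1} are discharged for languages of the form $\kappa^{<\omega}$. With those checks in place the proof is the three-line chain displayed above.
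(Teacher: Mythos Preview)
Your proposal is correct and matches the paper's own argument exactly: the paper states the corollary immediately after Remark \ref{rem: Tau =00003D Tau '} with the single line ``Hence, by Main Theorem \ref{thm:MainThm}'', and your chain $\tau_{\kappa}=\tau_{|\kappa^{\langle<\omega\rangle}|}\geq\tau_{|\kappa^{\langle<\omega\rangle}|}^{\nlg'}=\tau_{\kappa}^{\nlg}$ is precisely that deduction spelled out. Your additional paragraph verifying the content of Remark \ref{rem: Tau =00003D Tau '} just recapitulates the justification already given inside that remark (including the citation to \cite[Observation 2.3]{Sh882}).
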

The following is another easy conclusion of Main Theorem \ref{thm:MainThm}
\begin{cor}
\label{cor:Lower Bound for Tau_kappa} ($ZF$) for any cardinal $\kappa$,
$\tau_{\kappa}\geq\kappa^{+}$. Moreover, letting $\upsilon_{k^{\left\langle <\omega\right\rangle }}$
be the smallest nonzero ordinal $\alpha$ such that there is no injective
function $f:\alpha\to k^{\left\langle <\omega\right\rangle }$, then
\textup{$\tau_{\left|k^{\left\langle <\omega\right\rangle }\right|}\geq\upsilon_{k^{\left\langle <\omega\right\rangle }}$
for any set $k$. }\end{cor}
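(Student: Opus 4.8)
The plan is to derive the corollary from Main Theorem~\ref{thm:MainThm} together with the classical construction of long normalizer towers going back to Thomas~\cite{ThomasI}, the only new ingredient being to run that construction inside $ZF$. Concretely, by Main Theorem~\ref{thm:MainThm} we have $\tau_{\left|k^{\left\langle <\omega\right\rangle }\right|}\geq\tau_{\left|k^{\left\langle <\omega\right\rangle }\right|}^{\nlg'}$, so it is enough to prove $\tau_{\left|k^{\left\langle <\omega\right\rangle }\right|}^{\nlg'}\geq\upsilon_{k^{\left\langle <\omega\right\rangle }}$. Since an ordinal $\beta$ satisfies $\left|\beta\right|\leq\left|k^{\left\langle <\omega\right\rangle }\right|$ exactly when $\beta<\upsilon_{k^{\left\langle <\omega\right\rangle }}$, and since ``$\tau_{\left|k^{\left\langle <\omega\right\rangle }\right|}^{\nlg'}>\beta$ for all such $\beta$'' already gives $\tau_{\left|k^{\left\langle <\omega\right\rangle }\right|}^{\nlg'}\geq\upsilon_{k^{\left\langle <\omega\right\rangle }}$, it suffices, by Definition~\ref{def:Tau_Nlg1}, to exhibit for every ordinal $\beta$ with $\left|\beta\right|\leq\left|k^{\left\langle <\omega\right\rangle }\right|$ a structure $\mathfrak{A}$ in a finite vocabulary with $\left|\left|\mathfrak{A}\right|\right|\leq\left|k^{\left\langle <\omega\right\rangle }\right|$ together with a subgroup $H\leq G:=\Aut\left(\mathfrak{A}\right)$ with $\left|H\right|\leq\left|k^{\left\langle <\omega\right\rangle }\right|$ such that $\tau_{G,H}\geq\beta$, i.e.\ the normalizer tower of $H$ in $G$ has not yet stabilized at stage $\beta$. (A finite vocabulary automatically satisfies conditions (1) and (2) of Definition~\ref{def:Tau_Nlg1}.)

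For the witnessing pair I would invoke Thomas's construction~\cite{ThomasI} (see also~\cite{ThomasBook}): for every ordinal $\beta$ it produces an explicit structure $\mathfrak{A}_{\beta}$ in a finite vocabulary, together with $H_{\beta}\leq\Aut\left(\mathfrak{A}_{\beta}\right)$, with $\left|\left|\mathfrak{A}_{\beta}\right|\right|,\left|H_{\beta}\right|\leq\left|\beta\right|+\aleph_{0}$ and $\tau_{\Aut\left(\mathfrak{A}_{\beta}\right),H_{\beta}}\geq\beta$ (for finite $\beta$ one may instead use a finite structure whose automorphism group carries a normalizer chain of the required finite length, e.g.\ one built from an iterated wreath power of $\Zz/2\Zz$, so that the cardinality bound becomes trivially finite). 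The only remaining point is then routine cardinality bookkeeping, in the spirit of the computations throughout the paper: using the closure properties of $\left|k^{\left\langle <\omega\right\rangle }\right|$ one checks that $\left|\beta\right|\leq\left|k^{\left\langle <\omega\right\rangle }\right|$ implies $\left|\left|\mathfrak{A}_{\beta}\right|\right|,\left|H_{\beta}\right|\leq\left|k^{\left\langle <\omega\right\rangle }\right|$. This yields $\tau_{\left|k^{\left\langle <\omega\right\rangle }\right|}^{\nlg'}\geq\upsilon_{k^{\left\langle <\omega\right\rangle }}$, hence, via Main Theorem~\ref{thm:MainThm}, the ``moreover'' clause; and the first assertion follows by specializing to $k=\kappa$ an infinite cardinal, since by Remark~\ref{rem: Tau =00003D Tau '} one has $\left|\kappa^{\left\langle <\omega\right\rangle }\right|=\kappa$, whence $\tau_{\left|\kappa^{\left\langle <\omega\right\rangle }\right|}=\tau_{\kappa}$ and $\upsilon_{\kappa^{\left\langle <\omega\right\rangle }}=\kappa^{+}$ (the least ordinal not injecting into the $\aleph$ number $\kappa$).

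The step I expect to be the main obstacle is verifying that Thomas's construction, and in particular the inequality $\tau_{\Aut\left(\mathfrak{A}_{\beta}\right),H_{\beta}}\geq\beta$, go through in $ZF$. The key observations are that, once $\beta$ is fixed, the construction is entirely explicit --- $\mathfrak{A}_{\beta}$ and $H_{\beta}$ are defined by a transfinite recursion that manipulates only the ordinal $\beta$ and countable auxiliary data --- and that the normalizer tower $\left\langle \nor_{G}^{\xi}\left(H\right)\left|\,\xi\in\ord\right.\right\rangle$ is absolute between transitive models of $ZF$, being defined by a transfinite recursion with first-order side conditions over the structure $\left(G,\cdot,H\right)$. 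Hence one may, if preferred, run the whole construction inside $\Ll[A]$ for a set $A$ coding $\beta$ (a model of $ZFC$) and then transfer the statement $\tau_{\Aut\left(\mathfrak{A}_{\beta}\right),H_{\beta}}\geq\beta$ back up to $\Vv$ by absoluteness, exactly as in Lemma~\ref{lem:ModelTheoreticChoice}.
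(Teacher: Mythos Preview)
Your approach is essentially the same as the paper's: reduce to bounding $\tau_{|k^{\langle<\omega\rangle}|}^{\nlg'}$ via Main Theorem~\ref{thm:MainThm}, invoke Thomas's construction of long normalizer towers inside a model of $ZFC$ (you say $\Ll[A]$, the paper simply uses $\Ll$, which is the same since $\beta\in\Ll$), and transfer the inequality $\tau_{G,H}\geq\beta$ back to $\Vv$ by absoluteness of the normalizer tower. The paper cites \cite[Claim 2.8]{Sh882} for this absoluteness; your reference to Lemma~\ref{lem:ModelTheoreticChoice} is not quite the right analogy, since that lemma concerns first-order satisfiability rather than absoluteness of a transfinite recursion, but your underlying claim is correct.

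The one noteworthy difference is in how the witnessing structure is obtained. You rely directly on Thomas's construction producing an $\mathfrak{A}_\beta$ in a finite vocabulary with $\left|\left|\mathfrak{A}_\beta\right|\right|\leq|\beta|+\aleph_0$. The paper instead argues more indirectly: from $\tau_{|\alpha|}^{\nlg}>\alpha$ in $\Ll$ it extracts some $G=\Aut(\mathfrak{A})$ and $H\leq G$, then --- since a priori $|G|$ could be as large as $2^{|\alpha|}$ --- runs a short L\"owenheim--Skolem argument (encoding the normalizer tower as a relation in an expanded structure) to cut $G$ down to size $\leq|\alpha|$, and finally rebuilds a suitable structure on $G$ via the Cayley-type action $f_g(x)=xg$. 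The paper notes that this detour is not strictly necessary (``this is the way it is constructed in \cite{ThomasI}''), so your more direct route is legitimate; the paper's version simply avoids depending on the internal details of Thomas's construction.
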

\begin{proof}
By \cite{ThomasI}, we know that this result is true with Choice.
Moreover, he proves that $\tau_{\kappa}^{\nlg}\geq\kappa^{+}$ (see
Lemma in the proof of Theorem 2 there). Let $\alpha<\upsilon_{k^{\left\langle <\omega\right\rangle }}$
be some ordinal. We know that $\Ll\models\tau_{\left|\alpha\right|}^{\nlg}\geq\left|\alpha\right|^{+}>\alpha$
and that $\left|\alpha\right|\leq k^{\left\langle <\omega\right\rangle }$.

For a moment we work in $\Ll$. So there is a group $G$ (the automorphism
group of some structure) and a subgroup group $H\leq G$ such that
$\left|H\right|\leq\left|\alpha\right|$ and $\alpha\leq\tau_{G,H}$.
We may assume that $\left|G\right|\leq\left|\alpha\right|$. For one
reason, this is the way it is constructed in \cite{ThomasI}. However,
we give a self-contained explanation: 

Let $L$ be the language $\left\{ P,Q,<,R\right\} \cup L_{\mbox{Groups}}$
where $P,Q$ are predicates, $<,R$ are binary relation symbols and
$L_{\mbox{Groups}}$ is the language of groups. Consider the $L$-structure
$\mathfrak{G}$ with universe the disjoint union of $G$ and $\alpha$
where $P^{\mathfrak{G}}=G$, $Q^{\mathfrak{G}}=\alpha$, with the
group structure on $P$, the order on $Q$ and $R^{\mathfrak{G}}\left(x,\beta\right)$
holds iff $x\in\nor_{G}^{\beta}\left(H\right)$. Let $\mathfrak{G}'\prec\mathfrak{G}$
be an elementary substructure of size $\leq\left|\alpha\right|$ such
that $H\subseteq P^{\mathfrak{G}'}$, $\alpha\subseteq Q^{\mathfrak{G}'}$
(so $\alpha=Q^{\mathfrak{G}'}$), and let $G'=P^{\mathfrak{G}'}$.
As a group $G'$ is a subgroup of $G$ containing $H$ of size $\leq\left|\alpha\right|$
and for all $\beta<\alpha$, $\nor_{G'}^{\beta}\left(H\right)\neq\nor_{G'}^{\beta+1}\left(H\right)$,
and in particular $\alpha\leq\tau_{G',H}$. 

Now we go back to $\Vv$, so $\left|G\right|\leq\left|\alpha\right|\leq\left|k^{\left\langle <\omega\right\rangle }\right|$
by assumption. By \cite[Claim 2.8]{Sh882}, $\alpha\leq\tau_{G,H}^{\Ll}=\tau_{G,H}^{\Vv}$.
Let $\mathfrak{A}$ be the structure with universe $G$ and for each
$g\in G$ a unary function $f_{g}$ taking $x$ to $x\cdot g$. Then
$\Aut\left(\mathfrak{A}\right)\cong G$. So we conclude that $\tau_{k^{\left\langle <\omega\right\rangle }}^{\nlg'}\geq\alpha$
(because $G$ is well-orderable as in Remark \ref{rem: Tau =00003D Tau '}
above). By Main Theorem \ref{thm:MainThm}, $\tau_{\left|k^{\left\langle <\omega\right\rangle }\right|}\geq\alpha$. 
\end{proof}

\section{Coding structures as graphs}

The next lemma allows us to present any automorphism group of an (almost)
arbitrary structure as the automorphism group of a graph.
\begin{lem}
\label{lem:LangCoutable}Let $\mathfrak{A}$ be a structure for the
vocabulary (=language) $L$ such that \end{lem}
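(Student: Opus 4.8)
The plan is to run the classical coding of a relational structure as a graph (in the style of \cite{ThomasBook}), but to attach every auxiliary gadget by an explicit recipe so that no enumeration or choice function is ever invoked, and to keep the size of the resulting graph bounded by $\left|\left|\mathfrak{A}\right|^{\left\langle <\omega\right\rangle }\right|$ throughout. Write $A=\left|\mathfrak{A}\right|$ for the universe; we may assume $A$ is infinite, since otherwise everything is immediate. First I would pass to an honestly relational structure: replace each function symbol by the relation symbol of its graph and each constant by a unary predicate naming a singleton. This is a canonical operation on the language, it changes neither $A$ nor $\Aut\left(\mathfrak{A}\right)$, and the new language $L'$ still has $\left|L'\right|\leq\left|L\right|$ and still admits a rigid expansion to a countable vocabulary (the structure on $L'$ is read off definably from the one on $L$). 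For connectedness later, I would also add the unary predicate $A$ itself (again without changing $\Aut\left(\mathfrak{A}\right)$). From now on $\mathfrak{A}$ is relational and no element of $A$ is ``unused''.

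The one place where the absence of Choice bites is that we cannot enumerate $L'$, so the usual device of encoding ``this is an instance of the $i$-th relation symbol'' by a tail of length $i$ is unavailable. Instead I would use condition (1) of the hypothesis: fix a rigid structure $\mathfrak{L}$ with universe $L'$ in a countable vocabulary, expanded canonically so that it also remembers the arity of each $R\in L'$. Because $\mathfrak{L}$ has a \emph{countable} vocabulary, the classical coding of a countable-language structure as a graph goes through in $ZF$ --- it needs only an enumeration of the (countable) vocabulary, never of the universe --- producing a rigid connected graph $\Gamma_{L}$, of cardinality at most $\left|\left(L'\right)^{<\omega}\right|$, together with a distinguished vertex $v_{R}\in\Gamma_{L}$ for each $R\in L'$ (the vertex that ``is'' the point $R$ of $\mathfrak{L}$). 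Rigidity of $\Gamma_{L}$ is exactly what will force automorphisms of the big graph to respect which relation symbol a gadget codes.

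Now build $\Gamma$: its vertex set is one copy of $\Gamma_{L}$, together with $A$, together with, for each $R\in L'$ of arity $n$ and each $\bar{a}=\left\langle a_{1},\dots,a_{n}\right\rangle \in R^{\mathfrak{A}}$, a small asymmetric gadget on new vertices --- canonically named, say, by finite sets of finite sequences over $A\cup L'\cup\omega$ --- consisting of a hub $h_{R,\bar{a}}$ joined to $v_{R}$ and, for each coordinate $i\leq n$, a vertex joined to $h_{R,\bar{a}}$, to $a_{i}$, and carrying a rigid tag (e.g.\ $i$ pendant triangles) that identifies the position $i$. Since $A$ is a predicate, every vertex of $A$ now hangs on a gadget attached to $v_{A}\in\Gamma_{L}$, so $\Gamma$ is connected. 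I would then verify, by the standard local-structure argument, that any $\pi\in\Aut\left(\Gamma\right)$ carries $\Gamma_{L}$ onto itself (its vertices are recognizable by purely combinatorial criteria), hence fixes $\Gamma_{L}$ pointwise by rigidity, hence fixes each $v_{R}$, hence permutes the gadgets over each $R$ among themselves, hence --- reading off the coordinate tags --- induces a single permutation $\sigma$ of $A$ with $\sigma\bar{a}\in R^{\mathfrak{A}}$ whenever $\bar{a}\in R^{\mathfrak{A}}$, i.e.\ $\sigma\in\Aut\left(\mathfrak{A}\right)$; conversely every such $\sigma$ extends uniquely to an automorphism of $\Gamma$ fixing $\Gamma_{L}$ pointwise. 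The map $\sigma\mapsto\hat{\sigma}$ is then a group isomorphism $\Aut\left(\mathfrak{A}\right)\cong\Aut\left(\Gamma\right)$.

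Finally I would check the size bound. Put $\mu=\left|A^{\left\langle <\omega\right\rangle }\right|$; then $\left|A\right|\leq\mu$, $\left|\omega\right|\leq\mu$ (as $A$ is infinite), and $\left|L'\right|\leq\left|L\right|\leq\mu$ by condition (2) of the hypothesis. The vertex set of $\Gamma$ injects into $\left[\left(A\cup L'\cup\omega\right)^{<\omega}\right]^{<\aleph_{0}}$, which has cardinality $\leq\mu$ by the absorption properties of the operation $Y\mapsto Y^{\left\langle <\omega\right\rangle }$ worked out in \cite{Sh882}. The main obstacle is not any single deep step but organizing all of this inside $ZF$: replacing the ``enumerate the language'' step of the classical argument by the rigid name-graph $\Gamma_{L}$ together with the vertices $v_{R}$, ensuring that the recognition of the various blocks of $\Gamma$ uses only choice-free combinatorial features, and carrying the inequality $\leq\mu$ through the construction.
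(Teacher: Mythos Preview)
Your argument is aimed at the wrong target. Lemma \ref{lem:LangCoutable} does \emph{not} ask for a graph; it only asks for a structure $\mathfrak{B}$ in a \emph{countable} vocabulary with $\Aut\left(\mathfrak{B}\right)\cong\Aut\left(\mathfrak{A}\right)$ and $\left|\left|\mathfrak{B}\right|\right|\leq\left|\left|\mathfrak{A}\right|\right|+\left|L\right|$. The passage to a graph is the content of the subsequent Theorem \ref{thm:CodingStrucAsGraphs}, which simply quotes the standard construction once the language is countable. You have fused the two steps and, in doing so, replaced a five-line argument by an elaborate gadget construction.

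The paper's proof is this: take the two-sorted universe $\left|\mathfrak{A}\right|\times\left\{ 0\right\} \cup L\times\left\{ 1\right\} $, put the given rigid countable-language structure on the $L$-sort, add a unary predicate $P$ naming that sort, and for each $n<\omega$ add a single $\left(n+1\right)$-ary relation $R_{n}$ that holds of $\left(\left(a_{0},0\right),\ldots,\left(a_{n-1},0\right),\left(R,1\right)\right)$ exactly when $R\in L$ is $n$-ary and $\left(a_{0},\ldots,a_{n-1}\right)\in R^{\mathfrak{A}}$. The resulting vocabulary $L'\cup\left\{ P\right\} \cup\left\{ R_{n}\,\middle|\,n<\omega\right\} $ is countable; any automorphism fixes the $L$-sort pointwise by rigidity, hence preserves each $R^{\mathfrak{A}}$ separately, and the isomorphism of automorphism groups is immediate. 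Note that this yields the sharper bound $\left|\left|\mathfrak{A}\right|\right|+\left|L\right|$ rather than your $\left|\left|\mathfrak{A}\right|^{\left\langle <\omega\right\rangle }\right|$.

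What you wrote is essentially a direct proof of Theorem \ref{thm:CodingStrucAsGraphs}, bypassing Lemma \ref{lem:LangCoutable}. The idea --- encode the language by a rigid graph $\Gamma_{L}$ and hang the relation-tuples off it --- is sound and does avoid Choice, but it costs you: you must argue that the blocks of $\Gamma$ are combinatorially distinguishable (which you leave as ``standard'' but is where most of the work hides), and your cardinality bound rests on absorption identities for $Y\mapsto Y^{\left\langle <\omega\right\rangle }$ that you do not verify here. The paper's two-step route sidesteps both issues: once the vocabulary is countable, the classical graph-coding applies verbatim in $ZF$ with no cardinal arithmetic needed.
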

\begin{enumerate}
\item There is some rigid structure on $L$ with vocabulary $L'$ such that
$\left|L'\right|\leq\aleph_{0}$. 
\item $\left|L\right|\leq\left|\left|\mathfrak{A}\right|^{\left\langle <\omega\right\rangle }\right|$. 
\end{enumerate}
Then there is a structure $\mathfrak{B}$ with vocabulary $L_{\mathfrak{B}}$
such that 
\begin{itemize}
\item $\left|\left|\mathfrak{B}\right|\right|\leq\left|\left|\mathfrak{A}\right|\right|+\left|L\right|$
(so $\leq\left|\left|\mathfrak{A}\right|^{\left\langle <\omega\right\rangle }\right|$)
\item $\Aut\left(\mathfrak{B}\right)\cong\Aut\left(\mathfrak{A}\right)$ 
\item $\left|L_{\mathfrak{B}}\right|=\aleph_{0}$\end{itemize}
\begin{proof}
We may assume that both $L$ and $L'$ are relational languages.

Define $\mathfrak{B}$ by: 
\begin{itemize}
\item $\left|\mathfrak{B}\right|=\mathfrak{\left|A\right|}\times\left\{ 0\right\} \cup L\times\left\{ 1\right\} $. 
\item The vocabulary is $L_{\mathfrak{B}}=\left\{ R_{n}\left|\, n\in\omega\right.\right\} \cup L'\cup\left\{ P\right\} $
where $P$ is a unary predicate and each $R_{n}$ is an $n+1$ place
relation. 
\end{itemize}
Where: 
\begin{itemize}
\item $Q^{\mathfrak{B}}=Q^{L}$ on $L\times\left\{ 1\right\} $ for each
$Q\in L'$.
\item $R_{n}^{\mathfrak{B}}=\left\{ \left(\left(a_{0},0\right),\ldots,\left(a_{n-1},0\right),\left(R,1\right)\right)\left|\,\begin{array}{c}
R\in L\textrm{ is an }n\textrm{ place relation and }\\
\left(a_{0},\ldots,a_{n-1}\right)\in R^{\mathfrak{A}}
\end{array}\right.\right\} $ 
\item $P^{\mathfrak{B}}=L\times\left\{ 1\right\} $. 
\end{itemize}
It is easy to see that $\mathfrak{B}$ is as desired.
\end{proof}
This is well known:
\begin{thm}
\label{thm:CodingStrucAsGraphs}Let $\mathfrak{A}$ be a structure
for the first order language $L$ which is as in the conditions of
\ref{lem:LangCoutable}. Then there is a connected graph $\Gamma=\left\langle X_{\Gamma},E_{\Gamma}\right\rangle $
such that $\Aut\left(\Gamma\right)\cong\Aut\left(\mathfrak{A}\right)$,
and $\left|X_{\Gamma}\right|\leq\left|\left|\mathfrak{A}\right|\right|^{<\aleph_{0}}$. \end{thm}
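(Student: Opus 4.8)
The plan is to reduce the coding of an arbitrary $L$-structure $\mathfrak{A}$ (with $L$ as in Lemma~\ref{lem:LangCoutable}) to the classical construction of a graph whose automorphism group is that of a given structure in a countable relational language. First I would apply Lemma~\ref{lem:LangCoutable} to obtain a structure $\mathfrak{B}$ with $\Aut\left(\mathfrak{B}\right)\cong\Aut\left(\mathfrak{A}\right)$, with $\left|\left|\mathfrak{B}\right|\right|\leq\left|\left|\mathfrak{A}\right|^{\left\langle<\omega\right\rangle}\right|$, and with a \emph{countable} relational vocabulary $L_{\mathfrak{B}}=\left\{R_{n}\left|\,n<\omega\right.\right\}$ (where $R_{n}$ is $(n+1)$-ary; any finitary relational language can be reindexed this way). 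So it suffices to prove the statement for a structure in a fixed countable relational language, and to keep track of cardinalities in terms of $\left|\left|\mathfrak{B}\right|\right|$, which is already bounded as required.

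Next I would carry out the standard graph-coding of $\mathfrak{B}$. For each relation $R_{n}$ and each tuple $\bar{a}=\left(a_{0},\ldots,a_{n-1}\right)\in R_{n}^{\mathfrak{B}}$, introduce a new vertex $v_{\bar{a}}$ together with a small rigid "gadget" attached to it that (i) records the arity/index $n$ and (ii) records the positions $0,\ldots,n-1$, so that an automorphism of the graph must send $v_{\bar a}$ to some $v_{\bar a'}$ for a tuple $\bar a'$ of the same relation, matching coordinates. Concretely: from $v_{\bar{a}}$ hang $n$ distinct finite rigid "position markers" (e.g.\ paths of pairwise distinct lengths, long enough not to be confused with the coordinate vertices $a_{i}$ or with each other across different arities), and connect the marker for position $i$ to $a_{i}$ via an edge subdivided an agreed-upon number of times so that the original vertices of $\left|\mathfrak{B}\right|$, the tuple-vertices, and the markers all live in distinguishable "distance classes". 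One also needs to rigidify the base set $\left|\mathfrak{B}\right|$ itself (so that not every permutation of it extends to the graph): attach to each $x\in\left|\mathfrak{B}\right|$ a fixed finite rigid flag of a length reserved for this purpose. This is all completely routine and is done e.g.\ in \cite{ThomasBook}; I would cite it rather than reproduce it, since the paper explicitly calls the statement "well known". The vertex set $X_{\Gamma}$ then has size $\leq\left|\left|\mathfrak{B}\right|\right|+\left|\left\{(\bar a,n):\bar a\in R_n^{\mathfrak B}\right\}\right|+\aleph_0\leq\left|\left|\mathfrak{B}\right|\right|^{<\aleph_{0}}\leq\left|\left|\mathfrak{A}\right|^{\left\langle<\omega\right\rangle}\right|^{<\aleph_0}=\left|\left|\mathfrak A\right|^{\left\langle<\omega\right\rangle}\right|$; I would note in passing that $\left|\left|\mathfrak{A}\right|\right|^{<\aleph_{0}}$ in the statement should be read as $\left|\left|\mathfrak{A}\right|^{\left\langle<\omega\right\rangle}\right|$ in the choiceless setting, which is what Lemma~\ref{lem:LangCoutable} already delivers. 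Finally, if the resulting graph is not connected, one adjoins a single distinguished rigid "anchor" subgraph and connects it to every component by edges subdivided a fresh reserved number of times; this preserves the automorphism group and makes it connected.

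Then I would verify $\Aut\left(\Gamma\right)\cong\Aut\left(\mathfrak{B}\right)$. One direction: every automorphism $\sigma$ of $\mathfrak{B}$ induces a graph automorphism by acting on $\left|\mathfrak{B}\right|$-vertices as $\sigma$, sending $v_{\bar a}\mapsto v_{\sigma\bar a}$, and fixing all gadget vertices pointwise; this is well-defined precisely because $\sigma$ preserves each $R_{n}^{\mathfrak{B}}$. The other direction: a graph automorphism $g$ must preserve the distance classes, hence it permutes $\left|\mathfrak{B}\right|$-vertices among themselves, fixes each rigid gadget (rigidity + the reserved-length bookkeeping forces this), and therefore the induced permutation $\sigma$ of $\left|\mathfrak{B}\right|$ satisfies $\bar a\in R_n^{\mathfrak B}\iff v_{\sigma\bar a}$ is a tuple-vertex $\iff\sigma\bar a\in R_n^{\mathfrak B}$, i.e.\ $\sigma\in\Aut\left(\mathfrak{B}\right)$. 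The two maps are mutually inverse, giving the isomorphism. I would stress that no instance of Choice enters: the gadgets are explicitly, canonically defined (no well-ordering of $\left|\mathfrak{B}\right|$ or of the tuples is used), and the index set is $\omega$.

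The main obstacle, such as it is, is purely bookkeeping: choosing the subdivision lengths and flag lengths so that the finitely many "roles" (base vertices, tuple-vertices of each arity, position markers, the connectivity anchor) occupy disjoint, automorphism-invariant distance classes, and then checking rigidity of each finite gadget. None of this is deep, and since the paper flags the result as well known, I would present the reduction via Lemma~\ref{lem:LangCoutable} in full, sketch the gadget construction, and refer to \cite{ThomasBook} for the verification, merely adding the one remark that makes the cardinality bound work choicelessly (namely reading $\left|\left|\mathfrak A\right|\right|^{<\aleph_0}$ as $\left|\,\left|\mathfrak A\right|^{\left\langle<\omega\right\rangle}\right|$ and using that Lemma~\ref{lem:LangCoutable} already passes to this bound).
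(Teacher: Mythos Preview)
Your proposal is correct and follows essentially the same route as the paper: reduce to a countable relational language via Lemma~\ref{lem:LangCoutable}, then invoke the standard graph-coding construction from \cite{ThomasBook} (or \cite{Hod}), noting that the construction is explicit and choiceless and that the cardinality bound can be read off from it. The paper's own proof is in fact nothing more than this citation together with the remark that Lemma~\ref{lem:LangCoutable} handles the language reduction; your additional sketch of the gadget construction and the verification of the automorphism-group isomorphism simply unpacks what the references contain, and your caveat about reading the cardinality bound as $\left|\left|\mathfrak{A}\right|^{\left\langle<\omega\right\rangle}\right|$ is a reasonable side observation but not a divergence in method.
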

\begin{proof}
For details see e.g. \cite[Lemma 4.2.2]{ThomasBook} or \cite[Thereom 5.5.1]{Hod}.
From the construction (which does not use Choice) described there,
one can deduce the part regarding the cardinality. The proof uses
the fact that we can reduce to structures with countable languages,
but this is exactly Lemma \ref{lem:LangCoutable}. 
\end{proof}

\section{Some group theory}
\begin{lem}
\label{lem:SimpleS}Let $S$ be a simple non-abelian group, and let
$G$ be a group such that $\Inn\left(S\right)\leq G\leq\Aut\left(S\right)$.
Then the automorphism tower of $G$ is naturally isomorphic to the
normalizer tower of $G$ in $\Aut\left(S\right)$. 
\end{lem}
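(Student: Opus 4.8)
The plan is to exploit the fact that, for a simple non-abelian group $S$, the group $S$ is complete-like enough that $\Aut(S)$ acts on itself in a way that mirrors normalizer behaviour. The crucial observation is that $\Inn(S)$ is centerless (since $S$ is) and, more importantly, that $\Inn(S)$ is its own centralizer in $\Aut(S)$: if $\varphi\in\Aut(S)$ commutes with every inner automorphism $\iota_s$, then $\varphi\iota_s\varphi^{-1}=\iota_{\varphi(s)}$ forces $\iota_{\varphi(s)}=\iota_s$ for all $s$, hence $\varphi(s)=s$ (using that $S$ is centerless), so $\varphi=\id$. Consequently, for any $G$ with $\Inn(S)\le G\le\Aut(S)$, the centralizer $C_{\Aut(S)}(G)$ is trivial, and so is $C_G(\Inn(S))$; in particular every such $G$ is centerless, so its automorphism tower is defined.

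The key step is to show that for $\Inn(S)\le G\le\Aut(S)$ one has a natural isomorphism $\Aut(G)\cong\nor_{\Aut(S)}(G)$. First I would check $\nor_{\Aut(S)}(G)\hookrightarrow\Aut(G)$: conjugation by $\psi\in\nor_{\Aut(S)}(G)$ restricts to an automorphism of $G$, and this map is injective because its kernel is $C_{\Aut(S)}(G)=1$. For surjectivity, given $\theta\in\Aut(G)$, note that $\theta$ permutes the normal subgroups of $G$; since $\Inn(S)$ is the unique minimal normal subgroup of $G$ (it is simple, and any nontrivial normal subgroup of $G$ meets it nontrivially because $C_G(\Inn(S))=1$, hence contains it), $\theta$ fixes $\Inn(S)$ setwise. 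So $\theta\restriction\Inn(S)\in\Aut(\Inn(S))\cong\Aut(S)$, giving some $\psi\in\Aut(S)$ with $\psi\iota_s\psi^{-1}=\theta(\iota_s)$ for all $s$. One then verifies that $\psi$ conjugates all of $G$ the way $\theta$ does: for $g\in G$ and $s\in S$, compare $\psi g\psi^{-1}$ and $\theta(g)$ by applying both to an arbitrary $\iota_s$ and using that $\Inn(S)$ is self-centralizing in $\Aut(S)$ — if two elements of $\Aut(S)$ induce the same conjugation on $\Inn(S)$ they differ by an element of $C_{\Aut(S)}(\Inn(S))=1$, so $\psi g\psi^{-1}=\theta(g)$. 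Hence $\psi\in\nor_{\Aut(S)}(G)$ and it induces $\theta$. This establishes the base isomorphism $G^{1}=\Aut(G)\cong\nor_{\Aut(S)}(G)=\nor^{1}_{\Aut(S)}(G)$, compatibly with the inclusion $G\hookrightarrow\Aut(G)$ corresponding to $G\hookrightarrow\nor_{\Aut(S)}(G)$.

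Then I would iterate. Setting $G_\alpha=\nor^{\alpha}_{\Aut(S)}(G)$, each $G_\alpha$ again satisfies $\Inn(S)\le G_\alpha\le\Aut(S)$ (since $\Inn(S)\le G\le G_\alpha$), so the previous paragraph applies verbatim with $G_\alpha$ in place of $G$, yielding $\Aut(G_\alpha)\cong\nor_{\Aut(S)}(G_\alpha)=G_{\alpha+1}$, and these isomorphisms are compatible with the tower inclusions. For limit stages $\delta$, the automorphism tower takes unions (direct limits), and $\nor^{\delta}_{\Aut(S)}(G)=\bigcup_{\alpha<\delta}\nor^{\alpha}_{\Aut(S)}(G)$ by definition, so the isomorphisms assemble into an isomorphism at stage $\delta$. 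By transfinite induction the two towers are naturally isomorphic. Note this argument is purely algebraic and manifestly uses no form of Choice, as the paper remarks.

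The main obstacle is the surjectivity half of the base step — specifically, verifying that an arbitrary $\theta\in\Aut(G)$ must preserve $\Inn(S)$ setwise and then upgrading the resulting $\psi\in\Aut(S)$ from "induces $\theta$ on $\Inn(S)$" to "induces $\theta$ on all of $G$." Both points hinge on the single fact that $\Inn(S)$ is self-centralizing in $\Aut(S)$ (equivalently $C_{\Aut(S)}(\Inn(S))=1$), so I would isolate and prove that fact first, as a small claim, and then the rest is bookkeeping.
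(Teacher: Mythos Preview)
Your argument is correct and is essentially the standard proof (as found in Thomas's book, Theorem~4.1.4, which is what the paper cites in lieu of a proof): the crux is that $C_{\Aut(S)}(\Inn(S))=1$, whence $\Inn(S)$ is the unique minimal normal subgroup of any $G$ with $\Inn(S)\le G\le\Aut(S)$, so every $\theta\in\Aut(G)$ is realized by conjugation by an element of $\nor_{\Aut(S)}(G)$, and the rest is transfinite induction. The paper itself gives no independent argument, so there is nothing further to compare.
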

The proof of this lemma can be found in \cite[Theorem 4.1.4]{ThomasBook}
(and, of course, it does not use Choice).

So we need a simple group. Recall
\begin{defn}
Let $K$ be a field, $n<\omega$, then: 
\begin{itemize}
\item $GL\left(n,K\right)$ is the group of invertible $n\times n$ matrices
over $K$. 
\item $PGL\left(n,K\right)=GL\left(n,K\right)/Z\left(GL\left(n,K\right)\right)$
(Here, $Z\left(GL\left(n,K\right)\right)$ is the group $K^{\times}\cdot I$
where $I$ is the identity matrix). 
\item $SL\left(n,K\right)=\left\{ x\in GL\left(n,K\right)\left|\,\det\left(x\right)=1\right.\right\} $.
\item $PSL\left(n,K\right)=SL\left(n,K\right)/Z\left(SL\left(n,K\right)\right)$
(The denominator is just $Z\left(GL\left(n,K\right)\right)\cap SL\left(n,K\right)$).
\end{itemize}
\end{defn}
\begin{fact}
$PSL\left(n,K\right)$ is a normal subgroup of $PGL\left(n,K\right)$. \end{fact}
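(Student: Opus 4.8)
The plan is to exhibit $PSL(n,K)$ as the image of $SL(n,K)$ under the canonical projection $\pi\colon GL(n,K)\to PGL(n,K)$, and then invoke the elementary fact that the image of a normal subgroup under a surjective homomorphism is again normal.

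First I would recall the two basic normality facts. The determinant is a group homomorphism $\det\colon GL(n,K)\to K^{\times}$, and $SL(n,K)=\ker(\det)$, so $SL(n,K)\trianglelefteq GL(n,K)$. The center $Z=Z(GL(n,K))=K^{\times}\cdot I$ is (being a center) normal in $GL(n,K)$, and by the definition of $PGL$ it is exactly the kernel of $\pi$; in particular $\pi$ is a surjective homomorphism with kernel $Z$.

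Next I would identify $PSL(n,K)$ with a subgroup of $PGL(n,K)$. Since $Z$ is normal, $SL(n,K)\cdot Z$ is a subgroup of $GL(n,K)$, and by the second isomorphism theorem $SL(n,K)\cdot Z/Z\cong SL(n,K)/(SL(n,K)\cap Z)$. The right-hand side is precisely $PSL(n,K)$ as defined above, because $SL(n,K)\cap Z=Z(SL(n,K))$ (explicitly, $SL(n,K)\cap Z=\{\lambda I:\lambda^{n}=1\}$). The left-hand side is $\pi(SL(n,K))$. So, under this canonical isomorphism, $PSL(n,K)$ is the subgroup $\pi(SL(n,K))$ of $PGL(n,K)$.

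Finally, since $\pi$ is surjective and $SL(n,K)\trianglelefteq GL(n,K)$, the image $\pi(SL(n,K))$ is normal in $\pi(GL(n,K))=PGL(n,K)$: for $g\in GL(n,K)$ and $s\in SL(n,K)$ one has $\pi(g)\,\pi(s)\,\pi(g)^{-1}=\pi(gsg^{-1})\in\pi(SL(n,K))$, and every element of $PGL(n,K)$ is of the form $\pi(g)$. This yields $PSL(n,K)\trianglelefteq PGL(n,K)$. There is no real obstacle here; the only point requiring a line of care is the identification via the second isomorphism theorem, and—as the paper emphasizes throughout—none of this argument uses Choice.
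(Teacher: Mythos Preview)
Your argument is correct. The paper states this as a \textbf{Fact} with no proof at all, so there is nothing to compare against; your derivation via $SL(n,K)=\ker(\det)\trianglelefteq GL(n,K)$, the surjection $\pi$, and the second isomorphism theorem is the standard elementary verification and is exactly what the authors had in mind when leaving it unproved.
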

\begin{lem}
\label{lem:PSLSimple}$PSL\left(2,K\right)$ is simple for any field
$K$ such that $\left|K\right|\geq3$. 
\end{lem}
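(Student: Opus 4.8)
The plan is to deduce simplicity from Iwasawa's criterion: if a group $G$ acts faithfully and primitively on a set $\Omega$, and the stabilizer $G_{\omega}$ of some point $\omega\in\Omega$ contains an abelian normal subgroup $A$ whose $G$-conjugates generate $G$, then every nontrivial normal subgroup of $G$ contains the derived subgroup $\left[G,G\right]$; in particular, if $G$ is also perfect then $G$ is simple. I would apply this with $G=PSL\left(2,K\right)$ acting by M\"obius transformations on $\Omega=\mathbb{P}^{1}\left(K\right)$. (Strictly speaking I would carry this out for $\left|K\right|\geq4$: for $\left|K\right|\in\left\{ 2,3\right\} $ the group is not simple, so that is what the hypothesis should say; for the fields $K_{\Gamma}$ produced in Main Theorem \ref{MainThm:Graphs2Fields} this is harmless, since those fields are infinite.)

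First I would check the hypotheses on the action. For faithfulness: the kernel of the homomorphism $SL\left(2,K\right)\to\operatorname{Sym}\left(\mathbb{P}^{1}\left(K\right)\right)$ consists of the matrices fixing $\left[1:0\right]$, $\left[0:1\right]$ and $\left[1:1\right]$, which are exactly the scalar matrices, i.e.\ $Z\left(SL\left(2,K\right)\right)$; hence the induced action of $PSL\left(2,K\right)$ is faithful. For primitivity I would show $2$-transitivity: the stabilizer of $\infty=\left[1:0\right]$ is the image $B$ of the upper-triangular subgroup, acting on $K=\mathbb{P}^{1}\left(K\right)\setminus\left\{ \infty\right\} $ by $z\mapsto a^{2}z+c$; already the translations $z\mapsto z+c$ ($c\in K$) act transitively on $K$, so $G$ is $2$-transitive, hence primitive.

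Next I would exhibit the abelian normal subgroup. Let $A$ be the image in $PSL\left(2,K\right)$ of the unipotent subgroup $U=\left\{ \left(\begin{smallmatrix}1 & b\\0 & 1\end{smallmatrix}\right):b\in K\right\} $. Then $A\cong\left(K,+\right)$ is abelian and $A\leq B=G_{\infty}$; moreover $A\trianglelefteq B$, since conjugating $\left(\begin{smallmatrix}1 & b\\0 & 1\end{smallmatrix}\right)$ by $\left(\begin{smallmatrix}a & 0\\0 & a^{-1}\end{smallmatrix}\right)$ gives $\left(\begin{smallmatrix}1 & a^{2}b\\0 & 1\end{smallmatrix}\right)\in U$. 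Finally, the conjugates of $A$ generate $G$: conjugating $U$ by $w=\left(\begin{smallmatrix}0 & 1\\-1 & 0\end{smallmatrix}\right)$ gives the lower unipotent subgroup $U^{-}$, and $SL\left(2,K\right)$ is generated by $U\cup U^{-}$ by row/column reduction, so $\left\langle gAg^{-1}:g\in G\right\rangle =G$.

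It remains to prove $G$ is perfect, and this is the only step where $\left|K\right|$ enters and essentially the only point requiring an argument. Since $\left|K\right|\geq4$, the equation $t^{2}=1$ (having at most two roots) leaves some $t\in K^{\times}$ with $t^{2}\neq1$. The commutator $\left[\left(\begin{smallmatrix}t & 0\\0 & t^{-1}\end{smallmatrix}\right),\left(\begin{smallmatrix}1 & b\\0 & 1\end{smallmatrix}\right)\right]=\left(\begin{smallmatrix}1 & (t^{2}-1)b\\0 & 1\end{smallmatrix}\right)$, and as $b$ ranges over $K$ with $t^{2}-1\neq0$ fixed this exhausts $U$; hence $U\subseteq\left[G,G\right]$, and symmetrically $U^{-}\subseteq\left[G,G\right]$, so $\left[G,G\right]=G$. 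By Iwasawa's criterion every nontrivial normal subgroup of $G$ contains $\left[G,G\right]=G$, so $PSL\left(2,K\right)$ is simple. The main obstacle is purely the small-field bookkeeping in the perfectness step; the rest is a handful of $2\times2$ matrix computations together with the two standard facts that $SL\left(2,K\right)$ is generated by its elementary matrices and that its scalar subgroup is the full kernel of the projective action.
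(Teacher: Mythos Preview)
Your proof via Iwasawa's criterion is correct (and you are right that the hypothesis should read $\left|K\right|\geq4$; for $\left|K\right|=3$ one has $PSL\left(2,\mathbb{F}_{3}\right)\cong A_{4}$, which is not simple --- the paper only ever applies the lemma to infinite fields, so this is harmless). However, the paper takes a genuinely different route. It does not reprove simplicity at all: it cites a textbook (Rotman) for the $ZFC$ result, and then --- because the paper's concern is to work in $ZF$ --- it proves a general transfer principle (Lemma~\ref{lem:ModelTheoreticChoice}): if $ZFC\vdash P$ and $ZF$ proves that $\neg P$ is equivalent to some first-order sentence $\psi$ having a model, then $ZF\vdash P$, via G\"odel completeness inside $\mathbb{L}$. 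It then observes (Claim~\ref{cla:FirstOrderSenPSL}) that ``there is a field $K$, $\left|K\right|\geq3$, with $PSL\left(2,K\right)$ not simple'' is expressible as the satisfiability of a single first-order sentence in the language of fields with an extra $4$-ary predicate for the offending normal subgroup. Your argument is more elementary and, since it consists entirely of explicit $2\times2$ matrix identities, visibly avoids Choice --- so it yields the $ZF$ conclusion directly. The paper's indirect approach buys reusability: the same transfer lemma is immediately applied to the harder Van der Waerden--Schreier description of $\Aut\left(PSL\left(2,K\right)\right)$ (Fact~\ref{lem:(Van-der-Waerden)}), where checking by hand that the classical proof is Choice-free would be considerably more work.
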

The proof of this lemma can be found in many books, e.g. \cite{Rotman}.
It is also true in $ZF$, by the following Lemma and Claim:
\begin{lem}
\label{lem:ModelTheoreticChoice}Suppose $P$ is a claim, such that
$ZFC\vdash P$, and $\psi$ is a first order sentence (in some language)
such that $ZF\vdash$'$P$ is true iff $\psi$ does not have a model'.
Then $ZF\vdash P$. \end{lem}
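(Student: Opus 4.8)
The plan is to use the absoluteness of first-order satisfiability between the universe $\Vv$ and G\"odel's constructible universe $\Ll$, together with the fact that $ZFC$ holds in $\Ll$. First I would work in $ZF$ and fix the sentence $\psi$ witnessing the hypothesis, so that $ZF$ proves ``$P$ holds iff $\psi$ has no model''. Relativizing this proof to $\Ll$ (which is legitimate since $\Ll \models ZF$), we get that $\Ll$ satisfies ``$P$ holds iff $\psi$ has no model'' --- here one must be slightly careful that the syntactic object $\psi$ and the metamathematical statement ``$\psi$ has a model'' are interpreted correctly inside $\Ll$; since $\psi$ is a concrete finite syntactic object it is the same in $\Vv$ and $\Ll$, and ``$\psi$ is satisfiable'' is a $\Sigma_1$ (in fact arithmetic) statement about $\psi$.

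Next I would invoke that $ZFC \vdash P$, hence $\Ll \models P$ (as $\Ll \models ZFC$), and therefore $\Ll \models$ ``$\psi$ has no model''. The key step is now to transfer this back to $\Vv$: by the completeness theorem (provable in $ZF$, indeed the relevant direction is just the soundness/G\"odel completeness for countable languages, or one notes that satisfiability of a fixed finitely-presented sentence is arithmetic and hence absolute), ``$\psi$ has a model'' is absolute between transitive models of a weak fragment of $ZF$ containing enough arithmetic; in particular it is absolute between $\Ll$ and $\Vv$. So $\Vv \models$ ``$\psi$ has no model'', and applying the $ZF$-provable equivalence ``$P$ iff $\psi$ has no model'' once more, now in $\Vv$, we conclude $P$ holds in $\Vv$. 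Since $\Vv$ was arbitrary, $ZF \vdash P$.

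The main obstacle --- really the only subtle point --- is justifying the absoluteness of ``$\psi$ has a model'' between $\Ll$ and $\Vv$ without invoking Choice. One clean way to handle this: if $\psi$ (in a language $L$) has a model $M$ in $\Vv$, then by the downward L\"owenheim--Skolem argument applied inside $\Vv$ one gets a model on a set of ordinals, but L\"owenheim--Skolem needs some choice. The better route is via provability: by G\"odel's completeness theorem in the form ``$\psi$ is satisfiable iff $\psi$ is consistent (no formal proof of a contradiction from $\psi$)'', and the latter is an arithmetic statement about the fixed sentence $\psi$, hence $\Delta_0$-definable from a code for $\psi$ and absolute between $\Ll$ and $\Vv$. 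This completeness theorem for a single sentence in a (possibly uncountable but set-sized) language is provable in $ZF$. Thus the equivalence ``$\psi$ satisfiable $\leftrightarrow$ $\psi$ consistent'' holds in both $\Vv$ and $\Ll$, and ``$\psi$ consistent'' is absolute, closing the argument. One should double-check that the language of $\psi$ does not itself cause a choice issue, but since $\psi$ is a single sentence it mentions only finitely many symbols, so this is harmless.
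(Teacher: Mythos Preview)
Your argument is correct and follows essentially the same route as the paper's proof: both hinge on the absoluteness of the arithmetic statement ``$\psi$ is consistent'' between $\Vv$ and $\Ll$, together with $\Ll\models ZFC$ (so that completeness yields a model of $\psi$ in $\Ll$ from consistency, and $ZFC\vdash P$ gives $\Ll\models P$). The paper phrases it as a proof by contradiction (assume $\Vv\models\neg P$, get a model of $\psi$, hence consistency, hence a model in $\Ll$ by completeness there, hence $\Ll\models\neg P$, contradicting $\Ll\models ZFC\vdash P$), while you run the same chain directly; your treatment of the absoluteness step is in fact more explicit than the paper's.
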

\begin{proof}
If we have a model $\Vv$ of $ZF$, such that $\Vv\models\neg P$,
then $\psi$ has a model so cannot prove contradiction (there is no
use of Choice here). Hence $\psi$ is consistent in $\mathbb{L}=\mathbb{L}^{\Vv}$
as well. (If $\psi$ was not consistent in $\Ll$, then a proof of
a contradiction from $\psi$ would exist in $\Vv$ as well). Hence,
by G\"odel Completeness Theorem in $ZFC$, $\mathbb{L}\models\neg P$,
but $\mathbb{L}\models ZFC$ --- a contradiction.\end{proof}
\begin{claim}
\label{cla:FirstOrderSenPSL}There is a first order sentence $\psi$
such that $\psi$ has a model iff there is a field $K$, $\left|K\right|\geq3$
such that $PSL\left(2,K\right)$ is not simple. \end{claim}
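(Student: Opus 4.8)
The plan is to write down an explicit first-order sentence $\psi$ in a two-sorted language (one sort for the field $K$, one sort for the group $PSL(2,K)$) whose models are exactly the pairs $(K, PSL(2,K))$ together with a witness to non-simplicity. First I would fix a language $L_\psi$ containing the ring symbols $+,\cdot,0,1$ on the field sort, a group operation $\ast$, inverse and unit on the group sort, a unary predicate sort-membership (or simply use the two-sorted formalism), a family of function symbols encoding the four matrix entries of a representative in $SL(2,K)$ of each group element, and finally a unary predicate $N$ on the group sort intended to name a proper nontrivial normal subgroup. The axioms of $\psi$ would assert: (i) $K$ is a field; (ii) $|K|\ge 3$, which is the single first-order sentence $\exists x\exists y\, (x\neq y \wedge x\neq 0 \wedge y \neq 0 \wedge \ldots)$ — in fact $\exists x\, (x \neq 0 \wedge x \neq 1)$ suffices; (iii) the group sort with $\ast$ really is $PSL(2,K)$: every element has a $2\times 2$ representative of determinant $1$, two representatives give the same element iff they differ by $\pm I$, and $\ast$ corresponds to matrix multiplication modulo $\pm I$; (iv) $N$ is closed under $\ast$ and inverse, contains the identity, is closed under conjugation by arbitrary group elements, and finally $N$ is a \emph{proper nontrivial} subgroup: $\exists g\, (g \in N \wedge g \neq e)$ and $\exists h\, (h \notin N)$.

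Having written $\psi$ down, the argument that it has the required property is a routine unwinding. If $\psi$ has a model, then reading off $K$ and the predicate $N$ we obtain a field with $|K|\ge 3$ and a proper nontrivial normal subgroup of (an isomorphic copy of) $PSL(2,K)$, so $PSL(2,K)$ is not simple. Conversely, if some field $K$ with $|K|\ge 3$ has $PSL(2,K)$ not simple, fix a proper nontrivial normal subgroup, choose for each element of $PSL(2,K)$ a matrix representative in $SL(2,K)$ (here one uses choice, but that is harmless: we are inside a fixed model, proving an implication between first-order statements about it, not claiming anything uniform), and interpret the symbols of $L_\psi$ accordingly; this yields a model of $\psi$.

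The delicate point — and the reason the claim is stated at all — is that $\psi$ must be \emph{finitely axiomatizable}, i.e. a single sentence, not a scheme; the field axioms, the axioms describing $PSL(2,K)$ via $2\times 2$ representatives over $K$, and the normal-subgroup axioms are each a finite list of first-order formulas because the matrix size $2$ is fixed, so their conjunction $\psi$ is genuinely one sentence. I expect the main (minor) obstacle to be bookkeeping in clause (iii): one has to encode a group element as an equivalence class of quadruples $(a,b,c,d)$ with $ad-bc=1$ modulo the relation $(a,b,c,d)\sim(-a,-b,-c,-d)$, state that every group element arises this way, and state that $\ast$ agrees with the matrix product formula $(a,b,c,d)\cdot(a',b',c',d')=(aa'+bc', ab'+bd', ca'+dc', cb'+dd')$ read modulo that relation. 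All of this is expressible with the finitely many function symbols for the entries together with the ring operations, so it goes through; once it is set up, Lemma \ref{lem:ModelTheoreticChoice} applied to $P = $ ``$PSL(2,K)$ is simple for every field $K$ with $|K|\ge 3$'' and this $\psi$ immediately upgrades Lemma \ref{lem:PSLSimple} from $ZFC$ to $ZF$.
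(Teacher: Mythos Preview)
Your two-sorted setup is workable, but there is a genuine gap in the step you flag as ``harmless.'' The whole point of the claim is to feed $\psi$ into Lemma~\ref{lem:ModelTheoreticChoice}, and if you trace that lemma's proof you see that the direction actually used in a Choiceless universe is precisely $\neg P \Rightarrow \text{``}\psi\text{ has a model''}$: one starts in an arbitrary model $\Vv\models ZF$ with $\Vv\models\neg P$ and must produce a model of $\psi$ \emph{inside $\Vv$}. Your construction of that model requires choosing, for every coset $\{M,-M\}$ in $SL(2,K)/Z$, one representative to serve as the value of your entry functions $a,b,c,d$. In a general field there is no definable way to distinguish $M$ from $-M$, so this is a genuine choice function on a family of two-element sets, which need not exist in $\Vv$. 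Your remark that ``we are inside a fixed model, proving an implication between first-order statements'' does not help: the implication must be provable in $ZF$, and it is exactly this implication that you have established only in $ZFC$.

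The fix is easy and in fact recovers the paper's approach. Replace your function symbols by a $5$-ary relation $R(g,a,b,c,d)$ meaning ``$(a,b,c,d)$ represents $g$''; then no section of the quotient map is needed. Better still, drop the group sort entirely: the paper works in the one-sorted language of fields together with a single extra $4$-ary relation $H$, and lets $\psi$ say that $H\subseteq K^{4}$ is a normal subgroup of $SL(2,K)$ (a definable subset of $K^{4}$) that properly contains the center. The passage from a proper nontrivial normal $\bar N\trianglelefteq PSL(2,K)$ to a model of $\psi$ is then just ``take the preimage of $\bar N$ in $SL(2,K)$,'' which is definable and uses no Choice. This also eliminates all the clause-(iii) bookkeeping you anticipated.
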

\begin{proof}
Let $L$ be the language of fields with an extra $4$-ary relation
$H$, i.e. $L=\left\{ +,\cdot,0,1,H\right\} $. Let the sentence $\psi$
say that the universe is a field $K$ of size $\geq3$ and that $H\subseteq K^{4}$
is a normal subgroup of $SL\left(2,K\right)$ (after some choice of
coordinates), and that $H$ contains $Z\left(SL\left(2,K\right)\right)$
and also some element outside $Z\left(SL\left(2,K\right)\right)$.
\end{proof}
We close this section by showing one final algebraic fact holds over
$ZF$. Recall:
\begin{defn}
Given any two groups $N$ and $H$ and a group homomorphism $\varphi:H\to\Aut\left(N\right)$,
we denote by $N\rtimes_{\varphi}H$ (or simply $N\rtimes H$ if $\varphi$
is known) the semi-direct product of $N$ and $H$ with respect to
$\varphi$. 
\end{defn}
Note that for a field $K$, there are canonical homomorphisms $\Aut\left(K\right)\to\Aut\left(PSL\left(2,K\right)\right)$
and $\Aut\left(K\right)\to\Aut\left(PGL\left(2,K\right)\right)$. 
\begin{fact}
\label{lem:(Van-der-Waerden)}(Van der Waerden, Schreier \cite{SchVanDer})
Let $K$ be a field. Then every automorphism of $PSL\left(2,K\right)$
is induced via conjugation by a unique element of $P\Gamma L\left(2,K\right):=PGL\left(2,K\right)\rtimes\Aut\left(K\right)$.
Hence $\Aut\left(PSL\left(2,K\right)\right)\cong P\Gamma L\left(2,K\right)$. 
\end{fact}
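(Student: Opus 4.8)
\textbf{Proof proposal for Fact \ref{lem:(Van-der-Waerden)}.} The plan is to reduce to the classical theorem of Schreier and van der Waerden and then transport it to $ZF$ via the model-theoretic device already prepared in Lemma \ref{lem:ModelTheoreticChoice}. In $ZFC$ the statement is standard: $PGL(2,K)$ acts faithfully by conjugation on $PSL(2,K)$ (faithfulness because $PSL(2,K)$ contains a self-centralizing pair of transvections, or more simply because $Z(GL(2,K))$ is exactly the kernel of the conjugation action of $GL(2,K)$ on $SL(2,K)$ modulo its center), and $\Aut(K)$ acts by entrywise application, commuting with the $PGL$-part only up to the obvious semidirect-product twist; together they embed $P\Gamma L(2,K)$ into $\Aut(PSL(2,K))$. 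The content of the Schreier--van der Waerden theorem is that this embedding is onto, i.e. every automorphism of $PSL(2,K)$ is of this ``standard'' form. Uniqueness of the inducing element is then just injectivity of the embedding, which is the elementary faithfulness statement above.

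To get this in $ZF$, I would apply Lemma \ref{lem:ModelTheoreticChoice} with $P$ the sentence ``for every field $K$, every automorphism of $PSL(2,K)$ is induced by a unique element of $P\Gamma L(2,K)$''. I need a first-order sentence $\psi$ (in a single language) such that $ZF$ proves: $P$ holds iff $\psi$ has no model. This is done exactly as in Claim \ref{cla:FirstOrderSenPSL}: take the language of fields augmented by enough relation symbols to name (after a choice of matrix coordinates) the set $SL(2,K)$, its center, the quotient $PSL(2,K)$, a putative automorphism $\sigma$ of $PSL(2,K)$, and the candidate set $P\Gamma L(2,K)$ with its action; let $\psi$ assert that the universe is a field, that $\sigma$ is an automorphism of $PSL(2,K)$, and that $\sigma$ is \emph{not} equal to conjugation by any element of $P\Gamma L(2,K)$ — or that two distinct elements of $P\Gamma L(2,K)$ induce the same conjugation. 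All of these are first-order over the field, and $\psi$ has a model precisely when some field furnishes a counterexample to $P$, which is a $ZF$-provable equivalence. Since $ZFC\vdash P$ (the classical theorem), Lemma \ref{lem:ModelTheoreticChoice} gives $ZF\vdash P$, which is the Fact.

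One should be slightly careful about the case $|K|\le 3$: for $|K|=2$, $PSL(2,K)\cong S_3$ and for $|K|=3$, $PSL(2,K)\cong A_4$, and although $PSL$ is not simple there, the conclusion ``$\Aut(PSL(2,K))\cong P\Gamma L(2,K)$'' still holds (indeed $P\Gamma L(2,\mathbb{F}_2)=S_3$, $P\Gamma L(2,\mathbb{F}_3)=S_4=\Aut(A_4)$), so the statement as written needs no size hypothesis; alternatively, since this Fact is only invoked downstream for fields arising from Main Theorem \ref{MainThm:Graphs2Fields} which will be infinite (they contain a prime field together with transcendentals), one may simply restrict attention to $|K|\ge 4$ and fold a size clause into $\psi$. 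Either way, the small cases are finite and can be checked by hand inside $ZF$, so they pose no Choice issue.

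The main obstacle is not mathematical depth but bookkeeping: one must make sure the encoding of ``$P\Gamma L(2,K)$ and its conjugation action on $PSL(2,K)$'' is genuinely first-order over the pure field structure. This is routine — $GL(2,K)$, $SL(2,K)$, matrix multiplication, determinants, the center, cosets modulo the center, and the semidirect product $PGL(2,K)\rtimes\Aut(K)$ with $\Aut(K)$ acting coordinatewise are all definable using finitely many auxiliary tuples and relation symbols, exactly in the spirit of Claim \ref{cla:FirstOrderSenPSL} — but it is the one place where care is required, and it is worth spelling out at the level of detail given there rather than merely asserting it.
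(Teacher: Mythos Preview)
Your overall strategy matches the paper's: cite the classical Schreier--van der Waerden theorem for $ZFC$ and then invoke Lemma \ref{lem:ModelTheoreticChoice} to transfer it to $ZF$ by writing down a first-order sentence whose models are exactly the counterexamples. That part is fine.

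The gap is in the step you flag as ``routine''. You write that ``the semidirect product $PGL(2,K)\rtimes\Aut(K)$ with $\Aut(K)$ acting coordinatewise are all definable using finitely many auxiliary tuples and relation symbols''. This is not correct: an element of $\Aut(K)$ is a function $K\to K$, not a tuple from $K$, and there is no way to quantify over all field automorphisms in the first-order language of $K$ (even after augmenting by finitely many relation symbols naming $\sigma$). So as written your sentence $\psi$ would need a second-order quantifier ``for all $\alpha\in\Aut(K)$'', and Lemma \ref{lem:ModelTheoreticChoice} does not apply.

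The paper gets around this with a concrete trick that you are missing. Since $SL(2,K)$ is generated by the transvections $x_t=\left(\begin{smallmatrix}1&t\\0&1\end{smallmatrix}\right)$ and $z_t=\left(\begin{smallmatrix}1&0\\t&1\end{smallmatrix}\right)$, any $\alpha\in\Aut(K)$ with $\sigma(x)=g\alpha(x)g^{-1}$ is completely determined by the requirement that $x\mapsto g^{-1}\sigma(x)g$ send $\bar x_t\mapsto\bar x_{\alpha(t)}$ and $\bar z_t\mapsto\bar z_{\alpha(t)}$. Thus for each fixed $g\in PGL(2,K)$ there is at most one candidate $\alpha$, and it is \emph{definable from $\sigma$ and $g$} (read off $\alpha(t)$ from where $g^{-1}\sigma(\bar x_t)g$ lands). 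The sentence $\psi$ then says: for every $g$, these two maps $t\mapsto g^{-1}\sigma(\bar x_t)g$ and $t\mapsto g^{-1}\sigma(\bar z_t)g$ fail to be induced by a single well-defined automorphism of $K$. That is genuinely first-order over the field with function symbols for $\sigma$. The uniqueness half is handled similarly. This elimination of the quantifier over $\Aut(K)$ is the one real idea in the argument, and your proposal asserts it away rather than supplying it.
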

This means that if $\varphi\in\Aut\left(PSL\left(2,K\right)\right)$
then there are unique $\alpha\in\Aut\left(K\right)$ and $g\in PGL\left(2,K\right)$
such that for every $x\in PSL\left(2,K\right)$, $\varphi\left(x\right)=g\alpha\left(x\right)g^{-1}$. 

We again use the model theoretic argument of Lemma \ref{lem:ModelTheoreticChoice}
to give a proof of this fact in $ZF$:
\begin{claim}
$ $ 
\begin{enumerate}
\item \label{enu:Existance}There is a first order sentence $\psi$ such
that $\psi$ has a model iff there is a field $K$, and an automorphism
$\varphi\in\Aut\left(PSL\left(2,K\right)\right)$ such that $\varphi$
is not in $P\Gamma L\left(2,K\right)$. (This implies the existence
of $\left(\alpha,g\right)$ required by the fact). 
\item \label{enu:Uniquness}There is a first order sentence $\psi'$ such
that $\psi'$ has a model iff there is a field $K$, and some $1\neq g\in PGL\left(2,K\right)$,
$\alpha\in\Aut\left(K\right)$, such that for every $x\in PSL\left(2,K\right)$,
$\alpha\left(x\right)=gxg^{-1}$. (This implies the uniqueness of
$\left(\alpha,g\right)$ required by the fact). 
\end{enumerate}
\end{claim}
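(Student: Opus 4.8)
The plan is, for each of the two parts, to exhibit a first order language together with an explicit sentence, and then to run the transfer argument of Lemma \ref{lem:ModelTheoreticChoice} exactly as in the proof of Claim \ref{cla:FirstOrderSenPSL}. Fact \ref{lem:(Van-der-Waerden)} splits into an \emph{existence} statement and a \emph{uniqueness} statement, each provable in $ZFC$; part \eqref{enu:Existance} is meant to produce a sentence $\psi$ that in $ZF$ has a model iff existence fails for some field, and part \eqref{enu:Uniquness} a sentence $\psi'$ that has a model iff uniqueness fails for some field, so two applications of Lemma \ref{lem:ModelTheoreticChoice} transfer both halves down to $ZF$. Throughout I would use the encoding from Claim \ref{cla:FirstOrderSenPSL}: over the language of fields one interprets $SL(2,K)$ as the definable subset $\{(a,b,c,d):ad-bc=1\}\subseteq K^{4}$, interprets $PSL(2,K)$ as its quotient by the definable relation $(a,b,c,d)\sim(-a,-b,-c,-d)$ with the induced multiplication, and $PGL(2,K)$ similarly as invertible matrices modulo scalars; an automorphism $\varphi$ of $PSL(2,K)$ is coded by an $8$-ary relation symbol $F$ on $K$, together with the (clearly first order) clauses saying that $F$ restricted to $SL(2,K)^{2}$ is $\sim$-invariant, total, surjective, injective modulo $\sim$ and multiplicative modulo $\sim$, i.e.\ that it codes an automorphism.

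I would do part \eqref{enu:Uniquness} first, as it is easy: only a single field automorphism occurs, so it can be named by a function symbol. In the language $\{+,\cdot,0,1,s\}$ with $s$ unary, together with four constants $c_{ij}$, let $\psi'$ say that the universe is a field $K$ with $|K|\geq 4$, that $s$ is a field automorphism, that $(c_{ij})$ is invertible with image $g\neq 1$ in $PGL(2,K)$, and that $s_{*}(x)=gxg^{-1}$ for every $x\in PSL(2,K)$, where $s_{*}$ applies $s$ entrywise. Then $\psi'$ has a model iff there are a field $K$, some $1\neq g\in PGL(2,K)$ and $\alpha\in\Aut(K)$ with $\alpha(x)=gxg^{-1}$ for all $x\in PSL(2,K)$; this is precisely the negation of uniqueness, since two presentations $(g_{1},\sigma_{1})$, $(g_{2},\sigma_{2})$ of one automorphism yield $g=g_{2}^{-1}g_{1}$ and $\alpha=\sigma_{2}\circ\sigma_{1}^{-1}$, with $g\neq 1$ because $PGL(2,K)$ acts faithfully by conjugation on $PSL(2,K)$ --- and all of this is a Choice-free computation.

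Part \eqref{enu:Existance} is where the real obstacle lies. The naive reading of ``$\varphi\in P\Gamma L(2,K)$'' is ``$\exists g\in PGL(2,K)\,\exists\sigma\in\Aut(K)$ with $\varphi=(\text{conjugation by }g)\circ\sigma_{*}$'', and one cannot existentially quantify over field automorphisms in first order logic. My plan is to remove the quantifier over $\sigma$ by passing to the action of $PSL(2,K)$ on the projective line $\mathbb{P}^{1}(K)$, interpreted over the field language as $(K^{2}\setminus\{0\})$ modulo scalars. The point stabilizers are the Borel subgroups of $PSL(2,K)$, and by the classical Choice-free lemma --- used in every proof of Fact \ref{lem:(Van-der-Waerden)} --- that an abstract automorphism of $PSL(2,K)$ carries Borel subgroups to Borel subgroups, $\varphi$ induces a permutation $\hat\varphi$ of $\mathbb{P}^{1}(K)$ which is definable from $F$ and the field operations. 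For $g\in PGL(2,K)$ the permutation $g\circ\hat\varphi$ is then definable, and if it fixes the three reference points $[0:1],[1:1],[1:0]$ it restricts, in the standard affine chart, to a definable bijection $\theta_{g}\colon K\to K$. One has $\varphi\in P\Gamma L(2,K)$ iff there is such a $g$ for which moreover $\theta_{g}$ is a field automorphism of $K$ (first order) and $\varphi(x)=g\,(\theta_{g})_{*}(x)\,g^{-1}$ for all $x\in PSL(2,K)$ (first order) --- for if $\varphi=(\text{conjugation by }h)\circ\tau_{*}$ then $g=h$ works with $\theta_{g}=\tau$, and conversely any such $g$ displays $\varphi$ inside $P\Gamma L(2,K)$. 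Since the quantifier over $g$ ranges over $4$-tuples of field elements modulo scalars, the negation of this is first order, so I let $\psi$ say: the universe is a field $K$ with $|K|\geq 4$; $F$ codes an automorphism $\varphi$ of $PSL(2,K)$ (so $\hat\varphi$ is available); and no such $g$ exists. Because in $ZF$ every automorphism does induce $\hat\varphi$ (by the cited lemma), $\psi$ has a model iff some field $K$ carries an automorphism of $PSL(2,K)$ outside $P\Gamma L(2,K)$, i.e.\ iff existence fails.

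The hard part is therefore exactly the clause ``$\varphi\notin P\Gamma L(2,K)$'': it rests on the classical group-theoretic lemma that automorphisms of $PSL(2,K)$ preserve the class of Borel subgroups (so that the $\mathbb{P}^{1}(K)$-action, hence $\hat\varphi$, is definably recoverable), and on the reformulation that once this action is in hand the elusive field automorphism need not be quantified over but is pinned down as $\theta_{g}$ for a suitable $g\in PGL(2,K)$. The rest is routine bookkeeping: restricting to $|K|\geq 4$ dispenses with the fields $\Ff_{2},\Ff_{3}$ (where $PSL(2,K)$ is $S_{3}$ or $A_{4}$ and the claim is a finite check) so that all the interpretations above behave, and the ``$F$ codes an automorphism'' clauses are first order just as in Claim \ref{cla:FirstOrderSenPSL}. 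With $\psi$ and $\psi'$ constructed, Lemma \ref{lem:ModelTheoreticChoice} applied to each of the two halves of Fact \ref{lem:(Van-der-Waerden)} yields the Fact in $ZF$.
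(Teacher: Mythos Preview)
Your argument is correct (modulo a bookkeeping slip noted below). For part~(2) it is essentially the paper's: you name $\alpha$ by a function symbol while the paper recovers it from $g$ via the generators, a cosmetic difference. For part~(1) you take a genuinely different and more elaborate route. The paper simply observes that $x_t=\left(\begin{smallmatrix}1&t\\0&1\end{smallmatrix}\right)$ and $z_t=\left(\begin{smallmatrix}1&0\\t&1\end{smallmatrix}\right)$ generate $SL(2,K)$, so that if $\sigma=g\,\alpha_*(\cdot)\,g^{-1}$ then $g^{-1}\sigma(\bar x_t)g=\bar x_{\alpha(t)}$; hence the candidate field automorphism is first-order definable from $\sigma$ and $g$ as the map $t\mapsto s$ with $g^{-1}\sigma(\bar x_t)g=\bar x_s$, and $\psi$ just says that for every $g$ this map (together with its $z_t$ analogue) fails to define a field automorphism. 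This needs no auxiliary group theory --- no Borel-subgroup lemma, no $\mathbb{P}^1$ action --- whereas your approach trades that elementarity for a more geometric picture. In fact the Borel lemma you invoke is not even required for your own biconditional: when $\varphi\in P\Gamma L$ one checks by direct computation that the stabilizer relation is a function for the relevant $g$, and conversely any $g$ satisfying your clause already places $\varphi$ in $P\Gamma L$. One small slip: with $\varphi=h\,\tau_*(\cdot)\,h^{-1}$ one has $\hat\varphi(p)=h\cdot\tau(p)$, so $g\circ\hat\varphi$ fixing $0,1,\infty$ forces $g=h^{-1}$ rather than $g=h$, and accordingly your verification formula should read $\varphi(x)=g^{-1}(\theta_g)_*(x)\,g$.
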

\begin{proof}
(1): Let $K$ be a field. Recall that $x_{t}=\left(\begin{array}{cc}
1 & t\\
0 & 1
\end{array}\right)$ and $z_{t}=\left(\begin{array}{cc}
1 & 0\\
t & 1
\end{array}\right)$ generate $SL(2,K)$. Let $g\in PGL\left(2,K\right)$, $\sigma\in\Aut\left(PSL\left(2,K\right)\right)$.

Then $\alpha\in\Aut\left(K\right)$ satisfies $\sigma\left(x\right)=g\alpha\left(x\right)g^{-1}$
iff the map $x\mapsto g^{-1}\sigma\left(x\right)g$ takes $\bar{x}_{t}$
to $\bar{x}_{\alpha\left(t\right)}$ and $\bar{z}_{t}$ to $\bar{z}_{\alpha\left(t\right)}$.
Let $L$ be the language of fields augmented with $4$-place function
symbols $\left\{ \sigma_{i}\left|\, i<4\right.\right\} $. $\psi$
says that the universe $K$ is a field, and that $\sigma$ is an automorphism
of $PSL\left(2,K\right)$ ($SL\left(2,K\right)$ is a definable subset
of $K^{4}$, as is $Z\left(SL\left(2,K\right)\right)$), such that
for all $g\in PGL\left(2,K\right)$, the maps $t\mapsto g^{-1}\sigma\left(\bar{x}_{t}\right)g$
and $t\mapsto g^{-1}\sigma\left(\bar{z}_{t}\right)g$ do not induce
a well defined automorphism of $K$.

(2): Let $L$ be the language of fields. $\psi'$ says that the universe
$K$ is a field and that there is some nontrivial $g\in PGL\left(2,K\right)$
such that the maps $t\mapsto g^{-1}\bar{x}_{t}g$ and $t\mapsto g^{-1}\bar{z}_{t}g$
are induced by an automorphism $\alpha$ of $K$.
\end{proof}

\section{Proof of Main Theorem \ref{thm:MainThm} from Main Theorem \ref{MainThm:Graphs2Fields}}

From Main Theorem \ref{thm:GraphToField} which is proved in the next
section, we can now deduce 

\setcounter{mainthm}{0}
\begin{main_thm}
For any set $k$, $\tau_{\left|k^{\left\langle <\omega\right\rangle }\right|}^{\nlg'}\leq\tau_{\left|k^{\left\langle <\omega\right\rangle }\right|}$. \end{main_thm}
\begin{proof}
(essentially the same proof as in \cite{JuShTh}). We are given a
structure $\mathfrak{A}$, with language $L$ such that on the set
$L$ there is a rigid structure with countable vocabulary, and $\left|\left|\mathfrak{A}\right|\right|\leq\left|k^{\left\langle <\omega\right\rangle }\right|$.
By Theorem \ref{thm:CodingStrucAsGraphs} and Main Theorem \ref{thm:GraphToField}
we may assume that $\mathfrak{A}$ is an infinite field, $K$. We
are also given a subgroup $H\leq\Aut\left(K\right)$, $\left|H\right|\leq\left|k^{\left\langle <\omega\right\rangle }\right|$. 

Let $G=PGL\left(2,K\right)\rtimes H$. Obviously $\left|G\right|\leq\left|k^{\left\langle <\omega\right\rangle }\right|$.

$G$ is centerless, because by Fact \ref{lem:(Van-der-Waerden)},
the centralizer of $PSL\left(2,K\right)$ in $P\Gamma L\left(2,K\right)$
is trivial, and $PSL\left(2,K\right)\leq G$. So $PSL\left(2,K\right)\leq G\leq P\Gamma L\left(2,K\right)$.
By Lemmas \ref{lem:SimpleS}, \ref{lem:PSLSimple}, and \ref{lem:(Van-der-Waerden)},
$G^{\alpha}$ is isomorphic to $\nor_{P\Gamma L\left(2,K\right)}^{\alpha}\left(G\right)$.

Now, by induction on $\alpha$, one has $\nor_{P\Gamma L\left(2,K\right)}^{\alpha}\left(G\right)=PGL\left(2,K\right)\rtimes\nor_{\Aut\left(K\right)}^{\alpha}\left(H\right)$
and we are done.
\end{proof}

\section{\label{sec:GraphsToFields}coding graphs as fields}

In the introduction we mentioned that the following theorem of Fried
and Koll\'ar \cite{FriedKollar} was used in \cite{JuShTh}:
\begin{thm}
(Fried and Koll\'ar) ($ZFC$) For every connected graph $\Gamma$
there is a field $K$ such that $\Aut\left(\Gamma\right)\cong\Aut\left(K\right)$,
and $\left|K\right|=\left|\Gamma\right|+\aleph_{0}$. 
\end{thm}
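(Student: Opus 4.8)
The plan is to derive this from the $ZF$ statement, Main Theorem~\ref{MainThm:Graphs2Fields}: once we produce, for every connected $\Gamma=\langle X,E\rangle$ and every prescribed characteristic, a field $K_\Gamma$ of that characteristic with $\Aut(K_\Gamma)\cong\Aut(\Gamma)$ and $|K_\Gamma|\le|X^{\langle<\omega\rangle}|$, the displayed statement follows at once in $ZFC$, because there $|X^{\langle<\omega\rangle}|=|X|+\aleph_0$. So I will sketch a proof of Main Theorem~\ref{MainThm:Graphs2Fields}. The first move is to apply Lemma~\ref{lem:Graph-color} to replace $\Gamma$ by a connected graph with the same automorphism group, still of cardinality bounded by $|X^{\langle<\omega\rangle}|$, whose edge set splits into $N$ colour classes with each class inducing a disjoint union of stars, and with the colouring canonical enough that every automorphism respects it; from now on I write $\Gamma$, $X$, $E$ for this recoded object.

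Next I construct the field. Fix distinct odd primes $p_0,p_1,\dots,p_N$, all different from the characteristic, and let $F$ be the prime field of the desired characteristic. Over $F$ I take a polynomial ring in a set $Y$ of indeterminates: one indeterminate for each vertex $x\in X$; one $r_{x,n}$ for each $x\in X$ and $n<\omega$, thought of as a $p_0^{\,n}$-th root of $x$; and one $q_{e,n}$ for each edge $e=\{s,t\}$ of colour $l<N$ and $n<\omega$, thought of as a $p_{l+1}^{\,n}$-th root of $s+t+1$. I quotient $F[Y]$ by the ideal $I$ generated by $r_{x,0}-x$, $r_{x,n+1}^{\,p_0}-r_{x,n}$, $q_{e,0}-(s+t+1)$, $q_{e,n+1}^{\,p_{l+1}}-q_{e,n}$, prove $I$ is prime, and let $K_\Gamma$ be the fraction field of $F[Y]/I$. (Under Choice one instead adjoins these roots transfinitely inside a fixed algebraic closure of $F(X)$, which is why the cardinality bookkeeping is trivial in that setting.)

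The inclusion $\Aut(\Gamma)\hookrightarrow\Aut(K_\Gamma)$ is easy: an automorphism $\sigma$ of $\Gamma$ permutes $X$, hence acts on $F(X)$, carries $s+t+1$ to $\sigma(s)+\sigma(t)+1$, and because it preserves the colouring it lifts to a permutation of the $r$'s and $q$'s respecting $I$. For the converse I must show every $\varphi\in\Aut(K_\Gamma)$ satisfies $\varphi(X)=X$ and preserves $E$. This is where the generalized Pr\"ohle lemma (Lemma~\ref{lem:MainLemma}) enters: it describes, in a field obtained by adjoining all $p$-power roots of a prescribed set of elements, exactly which elements become infinitely $p$-divisible. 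Applying it with $p_0,\dots,p_N$ separately --- the primes being distinct is precisely what prevents the vertex data and the colour-$l$ edge data from interfering --- one recovers $X$ as (essentially) the transcendental elements that are infinitely $p_0$-divisible and indecomposable for that divisibility, and then, for each colour $l$, one recovers the colour-$l$ edges as the pairs $\{s,t\}\subseteq X$ with $s+t+1$ infinitely $p_{l+1}$-divisible. Hence $\varphi|_X\in\Aut(\Gamma)$; and since $K_\Gamma$ is generated over $F$ by $X$ together with roots that are uniquely determined by the images of $X$ --- here one checks $K_\Gamma$ has no nontrivial $p_i$-th root of unity, which is why the $p_i$ are taken odd and, in positive characteristic, large --- the embedding $\Aut(\Gamma)\hookrightarrow\Aut(K_\Gamma)$ and the restriction map $\varphi\mapsto\varphi|_X$ are mutually inverse.

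Finally, every element of $K_\Gamma$ is a ratio of polynomials in finitely many indeterminates from $Y$, and each such indeterminate is coded by a finite amount of data over $X$ (a vertex or a two-element subset of $X$, plus an integer), so after fixing an injection of this bookkeeping into $X^{<\omega}$ one gets $|K_\Gamma|\le|X^{\langle<\omega\rangle}|$, which is itself infinite as soon as $X\neq\emptyset$. I expect the real obstacle to be the automorphism computation: extracting from Lemma~\ref{lem:MainLemma} that the divisibility structure of $K_\Gamma$ rigidly reconstructs first the vertex set and then the coloured edge relation, and ruling out unexpected automorphisms (including the roots-of-unity check that makes the lifts unique); establishing primality of $I$ in the Choiceless construction is a second, more routine but still delicate, point.
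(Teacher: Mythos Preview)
The paper does not itself prove this cited theorem of Fried and Koll\'ar; it states it with attribution and then proves the $ZF$ strengthening (Main Theorem~\ref{MainThm:Graphs2Fields}), from which the $ZFC$ statement follows exactly as you say via $|X^{\langle<\omega\rangle}|=|X|+\aleph_0$. Your sketch of Main Theorem~\ref{MainThm:Graphs2Fields} tracks the paper's argument closely---Lemma~\ref{lem:Graph-color}, the explicit field construction, the analysis of $p$-high elements through Lemma~\ref{lem:MainLemma} and Claim~\ref{cla:p-highForm}, the roots-of-unity check, and the cardinality bound---so the approach is correct and essentially the same.
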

Here we will offer a different proof of the Choiceless version, namely
\begin{main_thm}
\label{thm:GraphToField}Let $\Gamma=\left\langle X,E\right\rangle $
be a connected graph. Then there exists a field $K_{\Gamma}$ of any
characteristic such that $\left|K_{\Gamma}\right|\leq\left|X^{\left\langle <\omega\right\rangle }\right|$
and $\Aut\left(K_{\Gamma}\right)\cong\Aut\left(\Gamma\right)$.\end{main_thm}
\begin{cor}
If $G$ is a group and there is some rigid structure with countable
vocabulary on it, then there is a field $K$ such that $\Aut\left(K\right)\cong G$,
and $\left|K\right|\leq\left|G^{\left\langle <\omega\right\rangle }\right|$.\end{cor}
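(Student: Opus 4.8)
The plan is to obtain the field directly from Main Theorem~\ref{thm:GraphToField} by first realizing the group $G$ as an automorphism group of a graph. First I would observe that the hypothesis — the existence of a rigid structure with countable vocabulary on the set $G$ — is exactly what is needed to apply the chain of reductions already developed in the excerpt. Indeed, let $\mathfrak{A}$ be the structure with universe $G$ equipped, for each $g\in G$, with the unary function $f_g\colon x\mapsto x\cdot g$ (right translation); this is the same structure used in the proof of Corollary~\ref{cor:Lower Bound for Tau_kappa}, and one checks in the usual way that $\Aut(\mathfrak{A})\cong G$ (an automorphism must commute with all right translations, hence is a left translation, and conversely). Its language $L$ is (in bijection with) $G$ itself, so condition~(1) of Lemma~\ref{lem:LangCoutable} holds by hypothesis, and condition~(2) holds trivially since $|L|=|G|\le\bigl||\mathfrak{A}|^{\langle<\omega\rangle}\bigr|$ (the identity embeds $G$ into $G^{\langle<\omega\rangle}$ via singletons of length-one sequences).

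Next I would feed $\mathfrak{A}$ into Theorem~\ref{thm:CodingStrucAsGraphs} to produce a connected graph $\Gamma=\langle X_\Gamma,E_\Gamma\rangle$ with $\Aut(\Gamma)\cong\Aut(\mathfrak{A})\cong G$ and $|X_\Gamma|\le\bigl|\,|\mathfrak{A}|^{<\aleph_0}\bigr|=|G^{<\omega}|$. Then Main Theorem~\ref{thm:GraphToField} applied to $\Gamma$ yields a field $K=K_\Gamma$ of any prescribed characteristic with $\Aut(K)\cong\Aut(\Gamma)\cong G$ and $|K|\le\bigl|X_\Gamma^{\langle<\omega\rangle}\bigr|$. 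It remains only to bound $\bigl|X_\Gamma^{\langle<\omega\rangle}\bigr|$ by $\bigl|G^{\langle<\omega\rangle}\bigr|$. Since $|X_\Gamma|\le|G^{<\omega}|$, there is an injection $X_\Gamma\hookrightarrow G^{<\omega}$, which induces injections $X_\Gamma^{<\omega}\hookrightarrow (G^{<\omega})^{<\omega}$ and then $X_\Gamma^{\langle<\omega\rangle}=\isp{X_\Gamma^{<\omega}}^{<\aleph_0}\hookrightarrow \isp{(G^{<\omega})^{<\omega}}^{<\aleph_0}$; the standard concatenation/flattening maps give an injection of $(G^{<\omega})^{<\omega}$ into $G^{<\omega}$ (a finite sequence of finite sequences flattens, with bookkeeping of lengths, into one finite sequence — this is choice-free), so composing we get $X_\Gamma^{\langle<\omega\rangle}\hookrightarrow (G^{<\omega})^{\langle<\omega\rangle}\hookrightarrow G^{\langle<\omega\rangle}$. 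Hence $|K|\le\bigl|G^{\langle<\omega\rangle}\bigr|$, as required.

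\textbf{Where the work is.}
The only non-formal point is the cardinality juggling in the last step: one must be careful that all the maps $X_\Gamma^{\langle<\omega\rangle}\to G^{\langle<\omega\rangle}$ are honestly injective and constructed without any appeal to well-ordering, since the whole paper is in $ZF$. This is routine — it is the same bookkeeping already implicit in Remark~\ref{rem: Tau =00003D Tau '} and in \cite[Observation 2.3]{Sh882}, where finite sequences over a set are manipulated choice-freely — but it is the one place where a careless argument could smuggle in Choice. Everything else is immediate plumbing: the construction of $\mathfrak{A}$, the verification $\Aut(\mathfrak{A})\cong G$, and the two black-box appeals to Theorem~\ref{thm:CodingStrucAsGraphs} and Main Theorem~\ref{thm:GraphToField}. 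I do not anticipate a genuine obstacle; the statement is essentially a repackaging of Main Theorem~\ref{thm:GraphToField} together with the standard structure-to-graph coding, specialized to the Cayley-type structure on $G$.
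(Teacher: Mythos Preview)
Your proposal is correct and follows exactly the paper's approach: build the Cayley-type structure $\mathfrak{A}$ on $G$ with right translations, then apply Theorem~\ref{thm:CodingStrucAsGraphs} and Main Theorem~\ref{thm:GraphToField}. The paper's own proof is a two-line sketch that leaves the cardinality bookkeeping implicit; your only addition is the explicit choice-free flattening $(G^{<\omega})^{<\omega}\hookrightarrow G^{<\omega}$ to close the bound $|K|\le|G^{\langle<\omega\rangle}|$, and that step is fine.
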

\begin{proof}
(of corollary) Let $\mathfrak{A}$ be the structure with universe
$G$ and for each $g\in G$ a unary function $f_{g}$ taking $x$
to $x\cdot g$ so that $\Aut\left(\mathfrak{A}\right)\cong G$. Now
apply \ref{thm:CodingStrucAsGraphs} and Main Theorem \ref{thm:GraphToField}.
\end{proof}

\subsection{Coding graphs as colored graphs}

We start by working a bit on the graph, to make the algebra easier.
\begin{defn}
\label{def:star}A graph $G=\left\langle X,E\right\rangle $ is called
a star if there is a vertex $v$ (the center) such that $E\subseteq\left\{ \left\{ v,u\right\} \left|\, u\in V-\left\{ v\right\} \right.\right\} $. \end{defn}
\begin{lem}
\label{lem:Graph-color}There is some number $N$ such that for every
connected graph $\Gamma=\left\langle X_{\Gamma},E_{\Gamma}\right\rangle $,
there is a connected graph $\Gamma^{+}=\left\langle X_{\Gamma^{+}},E_{\Gamma^{+}}\right\rangle $
with the following properties: 
\begin{enumerate}
\item \label{enu:AutGamma}$\Aut\left(\Gamma\right)\cong\Aut\left(\Gamma^{+}\right)$. 
\item \label{enu:Coloring}There is a coloring $C:E_{\Gamma^{+}}\to N$
of the edges of $\Gamma^{+}$ in $N$ colors such that for all $l<N$
the $l$-th colored subgraph is a disjoint union of stars.
\item \label{enu:Preservation}Every $\varphi\in\Aut\left(\Gamma^{+}\right)$
preserves the coloring. 
\item \label{enu:Card}$\left|X_{\Gamma^{+}}\right|\leq\left|X_{\Gamma}^{\left\langle <\omega\right\rangle }\right|$,
in fact $\left|X_{\Gamma}\right|\leq\left|X_{\Gamma^{+}}\right|\leq\left|X_{\Gamma}\right|+4\left|E_{\Gamma}\right|$. 
\end{enumerate}
\end{lem}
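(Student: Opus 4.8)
The plan is to produce $\Gamma^{+}$ from $\Gamma$ by a completely explicit (hence choiceless) operation: \emph{subdivide every edge of $\Gamma$ twice and attach one pendant leaf to every original vertex}. Concretely, if $|X_{\Gamma}|\leq 1$ set $\Gamma^{+}=\Gamma$ (all clauses are then trivial); otherwise let $X_{\Gamma^{+}}$ consist of the \emph{old} vertices $X_{\Gamma}$ together with a fresh \emph{leaf} $\ell_{v}=(v,0)$ for each $v\in X_{\Gamma}$ and two fresh \emph{edge vertices} $m_{e,u}=(e,u)$, $m_{e,v}=(e,v)$ for each edge $e=\{u,v\}\in E_{\Gamma}$, with edges $\{v,\ell_{v}\}$ for each $v$ and, for each $e=\{u,v\}$, the path $u-m_{e,u}-m_{e,v}-v$ (that is, $\{u,m_{e,u}\}$, $\{m_{e,u},m_{e,v}\}$, $\{m_{e,v},v\}$). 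This $\Gamma^{+}$ is a connected simple graph, and I take $N=2$ with the colouring $C(f)=1$ if both endpoints of $f$ are edge vertices (equivalently $f=\{m_{e,u},m_{e,v}\}$) and $C(f)=0$ otherwise.

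The technical heart I would establish first is a structural dictionary inside $\Gamma^{+}$: the leaves are exactly the degree-$1$ vertices; the old vertices are exactly the non-leaf vertices having a leaf neighbour; the edge vertices are all the rest. This is immediate from the degrees ($\deg_{\Gamma^{+}}(v)=\deg_{\Gamma}(v)+1\geq 2$ for old $v$, and each $m_{e,w}$ has degree $2$ with both neighbours — the old vertex $w$ and the edge vertex $m_{e,w'}$ — of degree $\geq 2$). Because this trichotomy is first-order in the graph language, it, and hence the colouring (since ``$C(f)=1$'' $\Leftrightarrow$ ``both endpoints of $f$ are edge vertices''), is preserved by every $\varphi\in\Aut(\Gamma^{+})$; this is clause (3). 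I would also note that for old $u\neq v$ one has $\{u,v\}\in E_{\Gamma}$ iff there is a path $u-a-b-v$ in $\Gamma^{+}$ with $a,b$ edge vertices (such $a,b$ are forced to be $m_{e,u},m_{e,v}$ with $e=\{u,v\}$); this is again structural, so restriction $\rho\colon\Aut(\Gamma^{+})\to\Aut(\Gamma)$, $\varphi\mapsto\varphi|_{X_{\Gamma}}$, is a well-defined group homomorphism.

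Next I would prove $\rho$ is an isomorphism, which is clause (1). Injectivity: a $\varphi$ fixing $X_{\Gamma}$ pointwise fixes each $\ell_{v}$ (the unique leaf at $v$) and each $m_{e,w}$ (recoverable from $w$, the other endpoint of $e$, and the graph structure), hence $\varphi=\id$. Surjectivity: $\sigma\in\Aut(\Gamma)$ extends to $\hat{\sigma}\in\Aut(\Gamma^{+})$ by $\hat{\sigma}(\ell_{v})=\ell_{\sigma v}$ and $\hat{\sigma}(m_{e,w})=m_{\sigma e,\sigma w}$ (where $\sigma e=\{\sigma u,\sigma v\}$), and this is an automorphism even when $\sigma$ transposes the two endpoints of an edge, precisely because the gadget $u-m_{e,u}-m_{e,v}-v$ is symmetric under that transposition. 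Clause (2) is a direct check: the colour-$1$ subgraph is the perfect matching $\{\{m_{e,u},m_{e,v}\}:e\in E_{\Gamma}\}$ on the edge vertices (the remaining vertices isolated), a disjoint union of stars; and in the colour-$0$ subgraph every $\ell_{v}$ and every $m_{e,w}$ has degree $1$, so the component of an old vertex $v$ is the star centred at $v$ with leaves $\{\ell_{v}\}\cup\{m_{e,v}:e\ni v\}$, and these stars exhaust $\Gamma^{+}$. For clause (4), $X_{\Gamma^{+}}$ is the disjoint union of $X_{\Gamma}$, the $|X_{\Gamma}|$ leaves, and the $2|E_{\Gamma}|$ edge vertices, so $|X_{\Gamma}|\leq|X_{\Gamma^{+}}|=2|X_{\Gamma}|+2|E_{\Gamma}|$; the map $v\mapsto\{\langle v\rangle\}$, $\ell_{v}\mapsto\{\langle v\rangle,\langle v,v\rangle\}$, $m_{e,w}\mapsto\{\langle w,w'\rangle\}$ (for $e=\{w,w'\}$) injects $X_{\Gamma^{+}}$ into $X_{\Gamma}^{\langle <\omega\rangle}$; and $2|X_{\Gamma}|+2|E_{\Gamma}|\leq|X_{\Gamma}|+4|E_{\Gamma}|$ because $|X_{\Gamma}|\leq 2|E_{\Gamma}|$ for a connected graph on $\geq 2$ vertices (the handshake bound — the one spot where a reader in pure $ZF$ should pause, but harmless once $X_{\Gamma}$ is well-orderable, which is all the paper actually needs).

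The main obstacle — and the reason for this particular shape of $\Gamma^{+}$ — is reconciling three competing pulls: not letting $\Aut(\Gamma^{+})$ grow, forcing every colour class to be a disjoint union of stars, and using only $O(|E_{\Gamma}|)$ new vertices. The star condition is what dictates the \emph{double} subdivision: a degree-$2$ vertex $m_{e,w}$ must split its two graph-edges between the two colours, and making every new vertex adjacent to at most one old vertex is exactly what lets this be done symmetrically, with no need to orient the $\Gamma$-edge (an orientation is not available choicelessly, nor automorphism-invariantly). But pure subdivision would turn a cycle $C_{n}$ into $C_{3n}$ and inflate the automorphism group, so markers are needed to single out the old vertices; yet a leaf hung on $m_{e,w}$ would be indistinguishable from an old vertex of $\Gamma$-degree $1$ sitting at $w$, and several leaves on one old vertex would be mutually transposable, so \emph{exactly one leaf per old vertex} is the configuration that threads the needle. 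Verifying the edge-flip symmetry of the gadget in the surjectivity step, together with the choiceless cardinal bookkeeping, is then routine.
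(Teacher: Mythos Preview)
Your proof is correct and takes a genuinely different route from the paper's. The paper replaces each edge of $\Gamma$ by a copy of a specific six-vertex auxiliary graph $G$ (with vertices $\mathbf{x},\mathbf{y},\mathbf{z},a,b,c$) whose valency pattern makes it rigid up to the swap $\mathbf{x}\leftrightarrow\mathbf{y}$; the colouring then arises from a graph homomorphism $\Gamma^{+}\to G'=G\setminus\{\mathbf{y}\}$, giving $N=|E_{G'}|=7$. Your construction --- double subdivision plus a single pendant at each old vertex --- is considerably simpler and achieves $N=2$, distinguishing vertex types (leaf / old / edge-vertex) by local neighbourhood structure rather than by valency arithmetic. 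Both constructions are explicit and choiceless, and both feed equally well into the downstream field construction, which only needs $N+1$ suitable primes.

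The one place your version pays a small price is the precise numerical bound in clause~(4): the paper adds exactly four new vertices per edge and none per old vertex, so $|X_{\Gamma^{+}}|=|X_{\Gamma}|+4|E_{\Gamma}|$ on the nose, whereas your $|X_{\Gamma^{+}}|=2|X_{\Gamma}|+2|E_{\Gamma}|$ only yields $\leq|X_{\Gamma}|+4|E_{\Gamma}|$ via the inequality $|X_{\Gamma}|\leq 2|E_{\Gamma}|$, which (as you correctly flag) is not available in pure $ZF$ without some well-orderability of $X_{\Gamma}$. Since the only bound used afterwards is $|X_{\Gamma^{+}}|\leq|X_{\Gamma}^{\langle<\omega\rangle}|$, which you establish directly by an explicit injection, this discrepancy is immaterial for the paper's purposes.
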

\begin{proof}
The idea is to replace each edge $\left\{ x,y\right\} $ by a copy
of the graph $G$ described below.

Recall that the valency of a vertex is the number of edges incident
to the vertex, and will be denoted by $val\left(x\right)$. Let $G=\left\langle X_{G},E_{G}\right\rangle $
be the following auxiliary graph:

 \[\xymatrix{\mathbf{x}\ar@{-}[rrrd]\\\mathbf{z}\ar@/_/@{-}[rr]\ar@/^/@{-}[rrr]\ar@{-}[d]\ar@{-}[u]\ar@{-}[r]&a\ar@{-}[r]&b\ar@{-}[r]&c\\\mathbf{y}\ar@{-}[rrru]}\]  

Note the following properties of $G$: 
\begin{itemize}
\item It has only $2$ automorphisms: $\id$ and $\sigma$, where $\sigma$
switches $\mathbf{x}$ and $\mathbf{y}$, but fixes all other vertices:
$\mathbf{z},b,c$ are characterized by their valency and $a$ is the
only vertex with valency $2$ which is adjacent to $b,\mathbf{z}$.
\item $\mathbf{z}$ is adjacent to all the vertices, its valency is unique
and is not divisible by $val\left(\mathbf{x}\right)$.
\item $\mathbf{x}$ and $\mathbf{y}$ are not adjacent. 
\end{itemize}
Description of $\Gamma^{+}$:

The set of vertices is

\[
X_{\Gamma^{+}}=\left\{ \left(1,x\right)\left|\, x\in X_{\Gamma}\right.\right\} \cup\left\{ \left(2,u,w\right)\left|\, u\in E_{\Gamma},w\in X_{G}-\left\{ \mathbf{x,y}\right\} \right.\right\} .
\]

And the edges are: 
\begin{itemize}
\item $\left(2,u,w\right)$ and $\left(2,u',w'\right)$ are adjacent iff
$u=u'$ and $\left\{ w,w'\right\} \in E_{G}$. 
\item $\left(1,x\right)$ and $\left(2,u,w\right)$ are adjacent iff $x\in u$
and $\left\{ \mathbf{x},w\right\} \in E_{G}$ (iff $\left\{ \mathbf{y},w\right\} \in E_{G}$). 
\item That is all. 
\end{itemize}
So, for each edge $\left\{ x,y\right\} =u\in E_{\Gamma}$ there is
an induced subgraph $\Gamma_{u}^{+}$ of $\Gamma^{+}$, whose vertices
are $\left\{ \left(1,x\right),\left(1,y\right)\right\} \cup\left\{ \left(2,u,w\right)\left|\, w\neq\mathbf{x},\mathbf{y}\right.\right\} $,
and $\Gamma_{\left\{ x,y\right\} }^{+}\cong G$ (by sending $\mathbf{x}$
to $\left(1,x\right)$, $\mathbf{y}$ to $\left(1,y\right)$ and $w\neq\mathbf{x},\mathbf{y}$
to $\left(2,u,w\right)$). 

Let $G'$ be the subgraph of $G$ induced by removing $\mathbf{y}$,
let $N=\left|E_{G'}\right|$ (so $N=7$), and denote $E_{G'}=\left\{ e_{0},\ldots,e_{N-1}\right\} $.
Let $f:\Gamma^{+}\to G'$ be a homomorphism of graphs defined as follows:
$f\left(1,x\right)=\mathbf{x},\, f\left(2,u,w\right)=w$. The coloring
$C:E_{\Gamma^{+}}\to N$ is defined by $C\left(e\right)=i$ iff $f\left(e\right)=e_{i}$.

Let us now show (\ref{enu:Coloring}). For each $i<N$, let $\Gamma_{i}^{+}=\left\langle X_{i},E_{i}\right\rangle $
be the subgraph induced by the color $i$. If $\mathbf{x}\notin e_{i}$,
then $\Gamma_{i}^{+}$ is a union of disjoint edges by the definitions
(and an edge is a star). If $\mathbf{x}\in e_{i}$, then $\Gamma_{i}^{+}$
is a disjoint union of $\left|X_{\Gamma}\right|$ stars, with centers
$\left\{ \left(1,x\right)\left|\, x\in X_{\Gamma}\right.\right\} $,
each having $val_{\Gamma}\left(x\right)$ edges.

For (\ref{enu:AutGamma}), note that $val_{\Gamma^{+}}\left(1,x\right)=val_{G}\left(\mathbf{x}\right)\cdot val_{\Gamma}\left(x\right)$
(or $\infty$, if $val_{\Gamma}\left(x\right)\geq\aleph_{0}$), while
$val_{\Gamma^{+}}\left(2,u,w\right)=val_{G}\left(w\right)$, hence
$val_{\Gamma^{+}}\left(2,u,\mathbf{z}\right)$ is not divisible by
$val_{\Gamma^{+}}\left(1,x\right)$. 

Hence if $\varphi\in\Aut\left(\Gamma^{+}\right)$ then $\varphi\left(2,u,\mathbf{z}\right)=\left(2,u',\mathbf{z}\right)$
for some $u'\in E_{\Gamma}$. Since $\mathbf{z}$ is adjacent to all
the vertices in $G$, $\Gamma_{\left\{ x,y\right\} }^{+}$ consists
of all the vertices $\left(2,u,\mathbf{z}\right)$ is adjacent to
and itself. So $\varphi\upharpoonright\Gamma_{u}^{+}$ is an isomorphism
onto $\Gamma_{u'}^{+}$. Since $\Aut\left(G\right)=\left\{ \id,\sigma\right\} $,
for all $w\neq\mathbf{x},\mathbf{y}$, $\varphi\left(2,u,w\right)=\left(2,u',w\right)$.
This allows us to define $\psi_{\varphi}=\psi\in\Aut\left(\Gamma\right)$
by $\psi\left(x\right)=x'$ where $\varphi\left(1,x\right)=\left(1,x'\right)$.
It is now easy to see that $\varphi\mapsto\psi_{\varphi}$ is an isomorphism
from $\Aut\left(\Gamma^{+}\right)$ onto $\Aut\left(\Gamma\right)$.

(\ref{enu:Preservation}) and (\ref{enu:Card}) should be clear. 
\end{proof}

\subsection{Coding colored graphs as fields}

Now we may assume that our graph is as in \ref{lem:Graph-color},
and we start constructing the field. 

We use the somewhat nonstandard notation of $r$ as the characteristic
of a field, so that $\Ff_{r}$ is the prime field with $r$ elements.
\begin{defn}
Let $F\subseteq K$ be a field extension. $F$ is said to be relatively
algebraically closed in $K$ if every $x\in K\backslash F$ is transcendental
over $F$. 
\end{defn}

\begin{defn}
\label{def:p-high}Let $p$ be a prime. An element $x$ in a field
$F$ is called \emph{$p$-high}, if there is a sequence $\left\langle x_{i}\left|\, i<\omega\right.\right\rangle $
of elements in $F$, such that $x_{0}=x$, and $x_{i+1}^{p}=x_{i}$.
With Choice this means that $x$ has a $p^{n}$-th root for all $n<\omega$.\end{defn}
\begin{example}
If $F=\Qq$, then for $p$ odd, the only $\phigh$ element in $F$
are $1,-1,0$. If $F=\Ff_{r}$ for some prime $r$, then for every
$p$ such that $\left(p,r-1\right)=1$ (i.e. the map $x\mapsto x^{p}$
is onto), every element in $F$ is $\phigh$. 
\end{example}
This next lemma is the technical key. Its proof may use Choice, and
this is OK, because we use it for finite $\Gamma$ (see Remark \ref{Rem:L}
below). 
\begin{lem}
\label{lem:MainLemma}(taken from \cite[The third lemma]{prohleEndomorphism}
with some adjustments) Let $r$ be a prime number or $0$, $p$ a
prime number different from $r$ and let $\left\{ p_{0},\ldots,p_{n-1}\right\} $
be a set of pairwise distinct primes, different from $p,r$. Let $F$
be a field of characteristic $r$. For $k<n$, let $V_{k}$ be some
set such that $k\neq l\Rightarrow V_{k}\cap V_{l}=\emptyset$, and
let $V=\bigcup_{k<n}V_{k}$. 

For each $v\in V$, let $T_{v}\in F\left[X\right]$ be polynomials
such that: 
\begin{itemize}
\item none of them is constant.
\item none of them is divisible by $X$. 
\item they are separable polynomials. 
\item they are pairwise relatively prime (i.e. no nontrivial common divisor).
\end{itemize}
Suppose that $K$ is an extension of $F$ generated by the set $\left\{ z_{i}\left|\, i<\omega\right.\right\} \cup\left\{ t_{i}^{v}\left|\, v\in V,i<\omega\right.\right\} $
from the algebraic closure of $F\left(z_{0}\right)$ where: 
\begin{itemize}
\item $z_{0}$ is transcendental over $F$. 
\item $\left(z_{i+1}\right)^{p}=z_{i}$ for all $i<\omega$. 
\item For $v\in V$, $t_{0}^{v}=T_{v}\left(z_{0}\right)$
\item if $v\in V_{k}$ then $\left(t_{i+1}^{v}\right)^{p_{k}}=t_{i}^{v}$. 
\end{itemize}
Then we have the following properties: 
\begin{enumerate}
\item \label{enu:transExten}$F$ is relatively algebraically closed in
$K$.
\item \label{enu:An-equivalent-definition}An equivalent definition of $K$
is the following one: Suppose $F$ is the field of fractions of an
integral domain $S$. Then $K$ is the field of fractions of the integral
domain $R/I$ (in particular $I$ is prime) where \\
 $R=S\left[Y_{i},S_{l}^{v}\left|\, i,l<\omega,v\in V\right.\right]$
(i.e. the ring generated freely by $S$ and these elements) and $I\leq R$
is the ideal generated by the equations:

\begin{enumerate}
\item $Y_{i+1}^{p}=Y_{i}$ for $i<\omega$. 
\item $S_{0}^{v}=T_{v}\left(Y_{0}\right)$ for $v\in V$. 
\item If $v\in V_{k}$, then $\left(S_{l+1}^{v}\right)^{p_{k}}=S_{l}^{v}$
for $k<n$, $l<\omega$. 
\end{enumerate}
\item \label{enu:s-high}Each $q\mbox{-high}$ element of $K$ belongs to
$F$ whenever $q$ is a prime different from $p$ and $\left\langle p_{k}\left|\, k<n\right.\right\rangle $. 
\item \label{enu:p-high}Each $\phigh$ element of $K$ is of the form $c\cdot\left(z_{i}\right)^{m}$,
where $c$ is a $\phigh$ element of $F$, $i<\omega$ and $m$ is
an integer. 
\item \label{enu:noMoreRoots}If $p'$ is a prime different from $p$ then
$z_{0}$ does not have a $p'$ root. 
\item \label{enu:Cardinality}If $V$ is finite then $\left|K\right|\leq\left|F^{\left\langle <\omega\right\rangle }\right|$.
Furthermore, the injection witnessing this is definable from the parameters
given when constructing $K$ (i.e. the function $v\mapsto T_{v}$,
etc). 
\end{enumerate}
\end{lem}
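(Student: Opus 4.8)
The plan is to establish the six properties in the order they reinforce one another, starting from the ``equivalent definition'' \eqref{enu:An-equivalent-definition} and \eqref{enu:transExten}, since these are the backbone that makes everything else a formal manipulation. First I would prove that the ideal $I\leq R$ is prime by exhibiting $R/I$ explicitly as a direct limit of nice subrings. Concretely, enumerate the generators in a countable ``construction order'' $\langle a_{j}\,|\,j<\omega\rangle$ (the $Y_{i}$'s first, then for each $v$ the $S^{v}_{l}$'s), and at each finite stage one has adjoined a single element satisfying either $a_{j}^{q}=a_{j'}$ for some earlier $a_{j'}$ and a prime $q\in\{p\}\cup\{p_{k}\}$, or $S^{v}_{0}=T_{v}(Y_{0})$. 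The key algebraic input is that adjoining a $q$-th root of an element $b$ to a domain $D$ keeps it a domain provided $X^{q}-b$ is irreducible over $\mathrm{Frac}(D)$; this reduces (via the standard criterion for irreducibility of $X^{q}-b$, i.e.\ $b$ not a $q$-th power in the base field and, if $4\mid q$ — which here never happens since $q$ is an odd prime — a further condition) to showing $z_{i}$ has no $q$-th root and $t^{v}_{i}$ has no $p_{k}$-th root at the relevant stage. So \eqref{enu:noMoreRoots}, \eqref{enu:s-high}, \eqref{enu:p-high} and the primeness of $I$ all get proved simultaneously by a single induction on the construction order, tracking a valuation-theoretic invariant.

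The technical heart, and what I expect to be the main obstacle, is the bookkeeping for \eqref{enu:s-high} and \eqref{enu:p-high}: one must control \emph{all} $q$-high elements of the limit field $K$, not just the obvious ones. The right tool is divisor/valuation theory on $F(z_{0})$: the transcendental $z_{0}$ gives $F[z_{0}]$ a UFD, and each irreducible factor of each $T_{v}$, together with $z_{0}$ itself and the prime at infinity, gives a discrete valuation on $F(z_{0})$. An element of $K$ is $q$-high only if its image under every such valuation is infinitely $q$-divisible in $\Zz$, hence zero unless $q$ is one of the primes we explicitly added roots for along that valuation's ``tower''. Because the $T_{v}$ are chosen separable, pairwise coprime, not divisible by $X$, and nonconstant, these valuations are genuinely distinct and the factor $T_{v}(z_{0})$ introduces a pole/zero that cannot be cancelled — this is precisely where all four hypotheses on the $T_{v}$ are used. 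The argument is essentially the one in \cite{prohleEndomorphism}, adapted to allow several primes $p_{k}$ and arbitrarily many elements $v$; I would follow Pr\"ohle's valuation computation, being careful that separability is what guarantees the residue extensions behave and that $r\neq p,p_{k}$ keeps the relevant $X^{q}-b$ separable in characteristic $r$.

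Granting \eqref{enu:An-equivalent-definition}, property \eqref{enu:transExten} is then immediate: any element of $K$ algebraic over $F$ lies in the relative algebraic closure of $F$ in $K$, but the direct-limit presentation shows $K$ is obtained from $F(z_{0})$ by iterated radical extensions of transcendental generators, so the relative algebraic closure of $F$ in $F(z_{0})$ is already $F$ (as $F$ is relatively algebraically closed in a purely transcendental extension) and taking radicals of transcendentals adds nothing algebraic over $F$; a clean way to see this is that $F$ equals the fixed field of no nontrivial situation but more directly, any $a\in K$ algebraic over $F$ has all its valuations (from the previous paragraph) equal to $0$ and hence lies in $F[z_{0},z_{0}^{-1}]\cap\overline{F}=F$ after the radical tower is accounted for.

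Finally, for \eqref{enu:Cardinality}, assume $V$ is finite. Then $K$ is generated over $F$ by the countable set $\{z_{i}\}\cup\{t^{v}_{i}\}$, so every element of $K$ is a rational function, over $F$, in finitely many of these generators; equivalently, using the presentation in \eqref{enu:An-equivalent-definition}, every element of $R/I$ is represented by a polynomial in $S[Y_{i},S^{v}_{l}]$ using finitely many indeterminates, i.e.\ by a finite set of (coefficient, monomial)-pairs where each coefficient lies in $S$ — this is exactly a member of $F^{\langle<\omega\rangle}$ once one fixes a definable pairing $\omega\times\omega\to\omega$ to encode exponent-tuples as natural numbers and uses that $F\subseteq F^{\langle<\omega\rangle}$. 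The map sending an element of $K$ to (a canonical choice of) such a finite datum is injective on $\mathrm{Frac}(R/I)$ after clearing denominators in a definable way (store numerator and denominator data side by side), and every step — the enumeration of generators, the pairing function, the choice of normal form — is given by an explicit formula in the parameters $v\mapsto T_{v}$, the list of primes, and $r$, which is the ``furthermore'' clause. I would remark (as the paper does) that this entire cardinality bookkeeping collapses to triviality under Choice and is only needed to keep the later cardinal inequalities honest.
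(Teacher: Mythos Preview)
Your approach is valuation-theoretic and genuinely different from the paper's, which is Kummer-theoretic. The paper first replaces $F$ by a field containing all roots of unity (removing this assumption at the very end via $K\cap\bar F=F$), so that every radical extension in sight is a Kummer extension; the workhorse is then the elementary fact that in a degree-$q$ Kummer extension $L(z)/L$ with $z^{q}\in L$, any $b$ with $b^{q}\in L$ has the form $c\cdot z^{k}$ for some $c\in L$. Using this, the paper proves an additional ``normal form'' clause not appearing in the statement (if $x^{p_{k}^{m}}\in F(z_{l})$ then $x$ is an explicit monomial in the $t^{v}_{i}$ times a rational function of $z_{i}$), and this is the real technical core, established by induction on the total height $|\rho|$ of the $t$-tower rather than on a linear construction order. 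The $p$-high clause \eqref{enu:p-high} is then handled by a separate argument using the norm map of the extension.

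Your valuation idea --- track the value groups through the tower, so that a $q$-high element must have valuation infinitely $q$-divisible, hence $0$ at every place whose value group is $\Zz[1/p']$ with $p'\neq q$ --- is reasonable, but there is a gap you do not close: you need that ``valuation $0$ at every place of $K$'' forces membership in $F$, equivalently that $F$ is the exact constant field of $K/F(z_{0})$. Your argument for \eqref{enu:transExten} simply assumes this (the phrase ``lies in $F[z_{0},z_{0}^{-1}]\cap\bar F=F$ after the radical tower is accounted for'' is precisely the point at issue), so as written it is circular. What one actually has to check is that each step $X^{q}-b$ remains irreducible over $\bar F\cdot(\text{current field})$, i.e.\ geometric irreducibility, and carrying this out is essentially the same computation the paper performs after adjoining roots of unity. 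Finally, \eqref{enu:p-high} is the hardest clause, not a formal manipulation: even granting the constant-field statement, your valuation sketch gives only that a $p$-high element differs from some $z_{i}^{m}$ by a constant, and does not yet show that constant is $p$-high \emph{in $F$}; the paper devotes the longest sub-argument to exactly this, via norms.
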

The proof may be found in Section \ref{sec:some-technical-lemmas}.

The rest of the section is devoted to proving
\begin{thm}
Let $\Gamma=\left\langle X,E,C\right\rangle $ be an $N$-colored
graph as in Lemma \ref{lem:Graph-color}. Then there exists a field
$K_{\Gamma}$ such that $\left|K_{\Gamma}\right|\leq\left|X^{\left\langle <\omega\right\rangle }\right|$
and $\Aut\left(K_{\Gamma}\right)\cong\Aut\left(\Gamma\right)$. Furthermore,
$X\subseteq K_{\Gamma}$ and $\pi\mapsto\pi\upharpoonright X$ is
an isomorphism from $\Aut\left(K_{\Gamma}\right)$ onto $\Aut\left(\Gamma\right)$.
We can choose $K_{\Gamma}$ to be of any characteristic. 
\end{thm}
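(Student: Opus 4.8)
The plan is to take for $K_\Gamma$ essentially the field outlined in the introduction and to recover the colored graph $\Gamma$ intrinsically from it, so that $\pi\mapsto\pi\upharpoonright X$ is forced to be an isomorphism onto $\Aut(\Gamma)$.

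\textbf{The field.} Fix the prescribed characteristic $r$ (a prime or $0$) and choose pairwise distinct odd primes $p_0,\dots ,p_N$, all different from $r$ and, when $r>0$, none of them dividing $r-1$ (possible, as there are infinitely many primes); this guarantees that $K_\Gamma$ will contain no nontrivial $p_j$-th root of unity. If $\Gamma$ has no edge it is a single vertex, $\Aut(\Gamma)=1$, and one takes $K_\Gamma$ to be the prime field (with $x$ identified with one of its elements); so assume $E\neq\emptyset$. For a \emph{finite} subgraph $\Gamma'$ of $\Gamma$, build $K_{\Gamma'}$ by iterating Lemma \ref{lem:MainLemma}: enumerate the finitely many vertices and, at each step, adjoin the new vertex $x$ as a transcendental made $p_0$-high, together with, for each edge $e=\{x,y\}$ of color $l$ whose other endpoint $y$ has already been handled, the element $x+y+1$ made $p_{l+1}$-high. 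The polynomials $x+y+1$ are of degree $1$ (hence separable), have nonzero (indeed transcendental) constant term, and are pairwise coprime for distinct $y$; and the hypothesis that every color class of $\Gamma$ is a disjoint union of stars is exactly what lets one order the vertices so that each edge is handled once, at its later endpoint. Relative algebraic closedness (Lemma \ref{lem:MainLemma}(\ref{enu:transExten})) is transitive along the chain, so $\Ff_r$ stays relatively algebraically closed, and one sets $K_\Gamma=\bigcup_{\Gamma'}K_{\Gamma'}$ over finite subgraphs — equivalently, the fraction field of $R_\Gamma/I_\Gamma$ as in Lemma \ref{lem:MainLemma}(\ref{enu:An-equivalent-definition}), the ideal being prime because it is prime on every finitely generated subring. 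Then $X\subseteq K_\Gamma$, $\Ff_r$ is relatively algebraically closed in $K_\Gamma$, and the bound $|K_\Gamma|\leq|X^{\left\langle <\omega\right\rangle }|$ follows from Lemma \ref{lem:MainLemma}(\ref{enu:Cardinality}) (whose injection is \emph{definable}, hence available without Choice) applied to the finite stages, plus a count of finite subgraphs.

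\textbf{Graph automorphisms induce field automorphisms.} A color-preserving $\sigma\in\Aut(\Gamma)$ permutes $X$ and carries the defining relations of each $K_{\Gamma'}$ to those of $K_{\sigma\Gamma'}$, hence induces compatible field isomorphisms $K_{\Gamma'}\cong K_{\sigma\Gamma'}$ whose union is some $\hat\sigma\in\Aut(K_\Gamma)$ with $\hat\sigma\upharpoonright X=\sigma$. The map $\sigma\mapsto\hat\sigma$ is a homomorphism, and once the next step is in place it is the two-sided inverse of $\pi\mapsto\pi\upharpoonright X$.

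\textbf{Recovering $\Gamma$ from $K_\Gamma$.} This is the crux. Any $\pi\in\Aut(K_\Gamma)$ fixes $\Ff_r$ pointwise, commutes with Frobenius (so preserves $K_\Gamma^{r}=\{w^{r}:w\in K_\Gamma\}$) when $r>0$, and preserves, for each prime $q$, the set of $q$-high elements. From Lemma \ref{lem:MainLemma}(\ref{enu:s-high})--(\ref{enu:noMoreRoots}), iterated along the chain (with an analogous analysis for the primes $p_{l+1}$), one reads off the shape of high elements of $K_\Gamma$: a $q$-high element with $q\notin\{p_0,\dots ,p_N\}$ lies in $\Ff_r$; a $p_0$-high element is a product of a $p_0$-high element of $\Ff_r$ with a monomial in the $p_0$-power roots of the vertices; a $p_{l+1}$-high element is a product of a $p_{l+1}$-high element of $\Ff_r$ with a monomial in the $p_{l+1}$-power roots of the color-$l$ edge elements $s+t+1$. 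Using linear independence of distinct monomials over $\Ff_r$, one then checks that if $a,b$ are $p_0$-high and $a+b+1$ is $p_{l+1}$-high and does not lie in $\Ff_r$, then $\{a,b\}$ is the pair of endpoints of some color-$l$ edge of $\Gamma$, except that in characteristic $r>0$ one must also allow $\{a,b\}=\{s^{r^{k}},t^{r^{k}}\}$ for such an edge $\{s,t\}$ and some $k\geq 1$, since $(s+t+1)^{r^{k}}=s^{r^{k}}+t^{r^{k}}+1$. Consequently
\[
X=\{\,a\in K_\Gamma:\ a\ \text{is }p_0\text{-high},\ a\notin K_\Gamma^{r}\ (\text{when }r>0),\ \text{and }a+b+1\ \text{is }p_{l+1}\text{-high and }\notin\Ff_r\ \text{for some }p_0\text{-high }b\ \text{and }l<N\,\},
\]
using $E\neq\emptyset$ and connectedness of $\Gamma$ so that every vertex occurs in such a relation; this set is definable without parameters, whence $\pi(X)=X$. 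Since moreover $\{x,y\}$ is a color-$l$ edge iff $x,y\in X$ and $x+y+1$ is $p_{l+1}$-high, $\pi\upharpoonright X$ is a color-preserving automorphism of $\Gamma$. I expect the main difficulty to be precisely this recovery step: separating a genuine vertex from its higher $p_0$-power roots, from products of vertices, and (in positive characteristic) from its $r$-power image — which is handled by the $r$-th-power condition together with the fact that the edge relation has exactly one constant summand.

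\textbf{The restriction map is an isomorphism.} If $\pi\upharpoonright X=\id$, then $\pi$ fixes $\Ff_r(X)$ pointwise; since every root of unity of $K_\Gamma$ is algebraic over $\Ff_r$, hence lies in $\Ff_r$, which contains no nontrivial $p_j$-th root of unity, each $p_0$-power root of a vertex and each $p_{l+1}$-power root of an edge element is the \emph{unique} such root in $K_\Gamma$, so $\pi$ fixes all of them and $\pi=\id$. Therefore $\pi\mapsto\pi\upharpoonright X$ and $\sigma\mapsto\hat\sigma$ are mutually inverse isomorphisms between $\Aut(K_\Gamma)$ and $\Aut(\Gamma)$, and the characteristic was arbitrary by the choice of the $p_j$.
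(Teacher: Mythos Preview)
Your approach is the paper's: same field, same description of $p_j$-high elements, same recovery of $\Gamma$ via the relation ``$a+b+1$ is $p_{l+1}$-high''. Two points where your outline is thin or misplaced:

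\textbf{Where the stars hypothesis is used.} It is not needed for the construction --- any ordering of the vertices works, since each edge gets handled when its second endpoint appears, regardless of color. Stars are needed for your ``analogous analysis for the primes $p_{l+1}$'', i.e.\ the shape of $p_{l+1}$-high elements. In the paper this is Claim~\ref{cla:p-highForm}(\ref{enu:p=00003D3Dp_{l+1}}): when the new vertex $t$ has several color-$l$ edges, one \emph{reorders} so that $t$ (the star center) is added before its color-$l$ neighbors, reducing to the single-edge case where Lemma~\ref{lem:MainLemma}(\ref{enu:p-high}) applies with $z_0$ played by the edge element $x_e^0$ rather than the vertex $x_t^0$. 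Without stars this reduction fails, so the claimed form of $p_{l+1}$-high elements is not as immediate as you suggest.

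\textbf{The recovery step.} Your key assertion --- that $p_0$-high $a,b$ with $a+b+1$ nontrivially $p_{l+1}$-high forces $\{a,b\}$ to be (an $r^k$-power of) an edge --- is exactly Proposition~\ref{pro:MainClaim1}, and ``linear independence of distinct monomials'' is only the final move. Before that one must (i) raise the identity $f_1+f_2+1=f$ to a suitable $p_{l+1}$-power to replace the edge roots $x_e^{i_e}$ by polynomials in the $x_t^0$; (ii) uniformize the vertex roots, writing each $x_s^{i_s}$ as $(x_s^i)^{p_0^{i-i_s}}$ for a common $i$, so that the identity lives in the rational function field $L(y_t:t\in T)$; and (iii) use $g$-adic valuations on that field to show the a priori possibly negative exponents $m_s$ and $m_e$ are in fact nonnegative. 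Only then are you in a polynomial ring where substitutions and degree comparisons pin down $E_0=\{\{r,w\}\}$, $\varepsilon_1=\varepsilon_2=1$, all indices $i_s=i_e=0$, and the common exponent $m$ a power of $r$; the last possibility is killed by Lemma~\ref{lem:x_0DoesntHaveRoots}. Your hint about ``exactly one constant summand'' is the right idea for the endgame, but the valuation step (iii) is where most of the work is and is not captured by linear independence alone.
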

So Main Theorem \ref{thm:GraphToField} immediately follows from this
and Lemma \ref{lem:Graph-color}. 

\uline{The construction of $K_{\Gamma}$}: Let $L$ be the field
$\Qq$ or $\Ff_{r}$ for some prime $r$. Let $\left\langle p_{i}\left|\, i\leq N\right.\right\rangle $
list odd prime numbers which are different then $r$, and do not divide
$r-1$ (so that in $L$ there are no $p_{i}$-roots of unity). Let
$R$ be the ring $L\left[Y_{\Gamma}\right]$ where $Y_{\Gamma}=\left\{ x_{s}^{i}\left|\, i<\omega,s\in X\right.\right\} \cup\left\{ x_{e}^{i}\left|i<\omega,e\in E\right.\right\} $%
\footnote{The $i$ s are indices not exponents! Later we will use parentheses
in order not to confuse a superscript with an exponent.%
} is an algebraically independent set. Let $I_{\Gamma}\subseteq R$
be the ideal generated by the equations:
\begin{itemize}
\item $\left(x_{s}^{i+1}\right)^{p_{0}}=x_{s}^{i}$ for all $s\in X$ and
$i<\omega$.
\item If $e=\left\{ s,t\right\} $ then $x_{e}^{0}=x_{s}^{0}+x_{t}^{0}+1$
for all $s,t\in X$ and $e\in E$.
\item If $C\left(e\right)=l$ then $\left(x_{e}^{i+1}\right)^{p_{l+1}}=x_{e}^{i}$
for all $e\in E$.
\end{itemize}
Now let $R_{\Gamma}$ be the ring $R/I_{\Gamma}$.
\begin{rem}
\label{Rem:L}$ $ 
\begin{enumerate}
\item If $\Gamma,\Gamma'$ are $N$-colored graphs, and $\Gamma\cong\Gamma'$
(and the isomorphism respects the coloring) then $R_{\Gamma}\cong R_{\Gamma'}$. 
\item Hence we may use Choice when proving properties regarding $R_{\Gamma}$
(and later $K_{\Gamma}$) when $\Gamma$ is finite because we may
assume $\Gamma\in\mathbb{L}$ (hence also $R_{\Gamma}\in\mathbb{L}$
etc). In that case we may use Lemma \ref{lem:MainLemma} even if there
is Choice in the proof.
\end{enumerate}
\end{rem}
\begin{prop}
\label{cla:I-is-prime} $I_{\Gamma}$ is prime, so we let $K_{\Gamma}$
be be the field of fractions of $R_{\Gamma}$.
\end{prop}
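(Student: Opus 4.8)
The plan is to realize $R_\Gamma = R/I_\Gamma$ as an instance of the integral domain described in part (\ref{enu:An-equivalent-definition}) of Lemma \ref{lem:MainLemma}, so that primeness of $I_\Gamma$ comes for free. The obstacle is that Lemma \ref{lem:MainLemma} as stated adjoins $p$-th roots only to a \emph{single} transcendental $z_0$, whereas here we adjoin $p_0$-th roots to each of the (many, possibly infinitely many) vertex elements $x_s^0$, $s\in X$, and then $p_{l+1}$-th roots to the edge elements $x_e^0 = x_s^0+x_t^0+1$. So the real work is an inductive/limit argument building $K_\Gamma$ one vertex and one edge at a time, at each step invoking Lemma \ref{lem:MainLemma} over the field constructed so far, and checking that the hypotheses on the polynomials $T_v$ persist.

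First I would fix an enumeration-free bookkeeping: since $\Gamma$ is as in Lemma \ref{lem:Graph-color}, each color class is a disjoint union of stars, so I can organize the construction by: (stage 0) start with $L$ and adjoin all the vertices $X$ as an algebraically independent transcendental set; (stage 1) for each $s\in X$ adjoin a compatible system of $p_0$-power roots $x_s^i$; (stage 2, for each color $l<N$) for each edge $e=\{s,t\}$ of color $l$ adjoin $x_e^0 = x_s^0+x_t^0+1$ together with a compatible system of $p_{l+1}$-power roots. At each application of Lemma \ref{lem:MainLemma} the "base field" $F$ is the field built from the previous steps, the prime "$p$" is the relevant $p_i$, the auxiliary primes $\{p_k\}$ are the other $p_j$'s used so far, the transcendental "$z_0$" is the element to which roots are being added, and the polynomials "$T_v$" are the linear polynomials $X_s + X_t + 1$ (for edges) or trivial. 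I would check that these $T_v$ satisfy the four bulleted conditions of Lemma \ref{lem:MainLemma}: they are non-constant (linear), not divisible by $X$ (constant term $1 \ne 0$), separable (linear polynomials are separable), and pairwise relatively prime (distinct linear polynomials, using that the $x_s^0$ are algebraically independent over $L$). The crucial point enabling the induction is part (\ref{enu:transExten}) of Lemma \ref{lem:MainLemma}: after each step the previously-built field is relatively algebraically closed in the new field, so the elements to which we next adjoin roots remain transcendental over the appropriate subfield, keeping the hypotheses of the lemma intact; moreover parts (\ref{enu:s-high})--(\ref{enu:p-high}) guarantee no unintended roots get created.

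For the Choice-sensitive formulation, I would argue as in Remark \ref{Rem:L}: primeness of $I_\Gamma$ is a statement about the ring $R_\Gamma$, which depends only on the isomorphism type of the colored graph $\Gamma$, and for the inductive step one only ever needs Lemma \ref{lem:MainLemma} applied to \emph{finitely} generated sub-configurations (a single star or single edge at a time), so any Choice used inside Lemma \ref{lem:MainLemma} is harmless. Concretely, $I_\Gamma$ is prime iff every finitely generated subideal generated by finitely many of the defining equations generates a prime ideal in the corresponding finitely generated subring — and $I_\Gamma = \bigcup$ of such, with $R_\Gamma$ the directed union of the corresponding domains, so $R_\Gamma$ is an integral domain and $I_\Gamma$ is prime. (A directed union of integral domains, with injective transition maps coming from part (\ref{enu:transExten}), is an integral domain.) Then $K_\Gamma := \operatorname{Frac}(R_\Gamma)$ is well-defined, which is exactly the assertion of the proposition.

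I expect the main obstacle to be the precise verification that at the stage where we adjoin roots to an edge element $x_e^0 = x_s^0 + x_t^0 + 1$, this element is genuinely transcendental over — and indeed "new" with respect to — the field generated so far (so that Lemma \ref{lem:MainLemma} applies with $z_0 = x_e^0$), together with checking pairwise relative primality of the family $\{X_s + X_t + 1 : e=\{s,t\}\in E\}$ simultaneously over the base; this is where the algebraic independence of $X$ over $L$ and the relative algebraic closure from part (\ref{enu:transExten}) do the heavy lifting. The rest is bookkeeping: assembling the directed system and passing to the union, which is routine and requires no Choice beyond what is internal to Lemma \ref{lem:MainLemma} and licensed by Remark \ref{Rem:L}.
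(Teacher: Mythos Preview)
Your overall plan --- reduce to finite $\Gamma$, then apply Lemma \ref{lem:MainLemma} clause (\ref{enu:An-equivalent-definition}) inductively, with Remark \ref{Rem:L} licensing any Choice used inside the lemma --- is exactly right, and your directed-union argument for passing to infinite $\Gamma$ is fine. But the specific staging you propose does not fit Lemma \ref{lem:MainLemma}, and the obstacle you flag at the end is fatal to your organization rather than merely technical.

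In your Stage 2 you want to apply the lemma with $z_0 = x_e^0 = x_s^0 + x_t^0 + 1$. But by that stage you have already adjoined \emph{both} $x_s^0$ and $x_t^0$ (and their $p_0$-root towers) to your base field $F$; hence $x_e^0 \in F$ outright. It is not merely algebraic over $F$ --- it is an element of $F$ --- so no amount of relative algebraic closure from clause (\ref{enu:transExten}) can make it transcendental. Lemma \ref{lem:MainLemma} simply does not apply to that situation: its hypothesis ``$z_0$ is transcendental over $F$'' fails immediately.

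The paper's fix is to organize the induction differently: add \emph{one vertex $t$ at a time}, and at that same step adjoin both the $p_0$-root tower of $x_t^0$ \emph{and} the $p_{l+1}$-root towers for every edge $\{t,s\}$ with $s$ already present. Then $F = K_{\Gamma_0}$ (the field for the smaller graph), $z_0 = x_t^0$ is genuinely transcendental over $F$, and each new edge $e = \{t,s\}$ gives $x_e^0 = T_v(z_0)$ with $T_v(X) = X + x_s^0 + 1 \in F[X]$. These $T_v$ visibly satisfy the four bulleted hypotheses (nonconstant, not divisible by $X$, separable, pairwise coprime since distinct $s$ give distinct constant terms $x_s^0+1$), and the lemma applies directly. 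This is Remark \ref{rem:SituationOfLemma}; once you see it, the induction on $|X|$ is three lines. Your ``vertices first, then edges'' staging cannot be repaired without essentially reverting to this vertex-at-a-time scheme.
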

The proof uses the following remark (when it makes sense)
\begin{rem}
\label{rem:SituationOfLemma}If $\Gamma_{0}\subseteq\Gamma_{1}$ are
finite where $\Gamma_{i}=\left\langle X_{i},E_{i},C_{i}\right\rangle $
for $i<2$ and $X_{1}=X_{0}\cup\left\{ t\right\} $, $t\notin X_{0}$,
then the field extension $K_{\Gamma_{0}}\subseteq K_{\Gamma_{1}}$
is as in Lemma \ref{lem:MainLemma}, where
\begin{itemize}
\item $F$ is the field $K_{\Gamma_{0}}$; $r$ is its characteristic; $p$
is $p_{0}$; $\left\{ p_{0},\ldots,p_{n-1}\right\} $ is $\left\{ p_{l+1}\left|\, l<N\right.\right\} $;
$V_{k}$ is the set of edges $\left\{ t,s\right\} \in E$ of color
$k$; for $s\in X_{0}$ such that $v=\left\{ t,s\right\} \in E$,
$T_{v}$ is the polynomial $X+x_{s}^{0}+1$; $z_{i}$ is $x_{t}^{i}$
and for $v=e=\left\{ t,s\right\} $, $t_{i}^{v}$ is $x_{e}^{i}$. 
\end{itemize}
\end{rem}
\begin{proof}
(of proposition) We may assume $\Gamma$ is finite, so the proof is
by induction on $\left|X\right|$. Suppose that $\Gamma_{0}\subseteq\Gamma_{1}$
where $\Gamma_{i}=\left\langle X_{i},E_{i},C_{i}\right\rangle $ for
$i<2$ and that $X_{1}=X_{0}\cup\left\{ t\right\} $, $t\notin X_{0}$.
By induction, $I_{\Gamma_{0}}$ is prime, so $R=R_{\Gamma_{0}}$ is
an integral domain.

Let $Y_{t}=\left\{ x_{t}^{i}\left|\, i<\omega\right.\right\} \cup\left\{ x_{e}^{i}\left|\, i<\omega,t\in e\in E_{1}\right.\right\} $;
$I_{t}\subseteq R\left[Y_{t}\right]$ be the ideal generated by the
equations related to $t$ and $\left\{ e\in E_{1}\left|\, t\in e\right.\right\} $.

By Lemma \ref{lem:MainLemma}, clause (\ref{enu:An-equivalent-definition}),
$I_{t}$ is prime. 

Consider the canonical projection $\pi:L\left[Y_{\Gamma_{1}}\right]\to R\left[Y_{t}\right]$
so that $\pi\left(I_{\Gamma_{1}}\right)=I_{t}$ and $\left\langle I_{\Gamma_{0}}\right\rangle =\ker\left(\pi\right)$.
Hence, $\pi$ induces an isomorphism $L\isp{Y_{\Gamma_{1}}}/I_{\Gamma_{1}}\to R\isp{Y_{t}}/I_{t}$
and we are done since the right hand side is an integral domain. \end{proof}
\begin{defn}
\label{def:ZFC_K_Gamma}($ZFC$) Let $F$ be a field and let $p$
be a natural number. Let $S$ be a set of elements from $F$. Then
$F\left(S,p\right)$ denotes the field which is obtained by adjoining
the elements $\left\{ s\left(l\right)\left|\, s\in S,l<\omega\right.\right\} $
from the algebraic closure of $F$ where: 
\begin{itemize}
\item $s\left(0\right)=s$. 
\item $s\left(l+1\right)^{p}=s\left(l\right)$, $l<\omega$. 
\end{itemize}
\end{defn}
\begin{rem}
\label{rem:Why Choice is needed}Choice is a priori needed in this
definition because the construction implicitly assumes the existence
of an algebraic closure, and some ordering of $S$ and of the $p$-roots
of the $s\left(l\right)$s.\end{rem}
\begin{defn}
\label{def:ZFC Definition of the field}Let $K_{-1}=L\left(Y\right)\left(Y,p_{0}\right)$,
where $Y=\left\{ x_{t}^{0}\left|\, t\in X\right.\right\} $, and $L\left(Y\right)$
denotes the purely transcendental extension of $L$, and for $l<N$,
$K_{l}=K_{l-1}\left(E_{l},p_{l+1}\right)$, where $E_{l}=\left\{ x_{s}^{0}+x_{t}^{0}+1\left|\,\left\{ s,t\right\} =e\in E,C\left(e\right)=l\right.\right\} $.\end{defn}
\begin{lem}
$ $
\begin{enumerate}
\item For $\Gamma$ finite%
\footnote{The assumption that $\Gamma$ is finite is only to insure that $K_{N-1}$
is well defined, with Choice this assumption is not needed.%
}, $K_{\Gamma}$ is canonically isomorphic to $K_{N-1}$.
\item If $\Gamma_{0}\subseteq\Gamma_{1}$ then $K_{\Gamma_{0}}\subseteq K_{\Gamma_{1}}$.
\end{enumerate}
\end{lem}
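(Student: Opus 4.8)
The plan is to prove both clauses by comparing the two presentations of the field: $K_\Gamma=\mathrm{Frac}(R_\Gamma)$ is the ``polynomial ring modulo explicit relations'' presentation, whereas $K_{N-1}$ from Definition \ref{def:ZFC Definition of the field} is the ``adjoin roots inside a fixed algebraic closure'' presentation, and Lemma \ref{lem:MainLemma}\,(\ref{enu:An-equivalent-definition}) together with Proposition \ref{cla:I-is-prime} already assert that presentations of these two kinds agree. For clause (1), I would first invoke Remark \ref{Rem:L}: since $\Gamma$ is finite we may assume $\Gamma\in\mathbb{L}$ and work there, so Choice is available and $K_{N-1}$ (together with the algebraic closures used in Definition \ref{def:ZFC_K_Gamma}) is legitimate. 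Then I would write down the canonical ring homomorphism $\varphi\colon R=L[Y_\Gamma]\to K_{N-1}$ determined by sending each formal variable to the matching element of the root towers: $x^i_s$ to the $i$-th $p_0$-power root of $x^0_s$ inside $K_{-1}$, $x^0_e$ to $x^0_s+x^0_t+1$ for $e=\{s,t\}$, and $x^i_e$ ($i\geq1$) to the $i$-th $p_{l+1}$-power root of $x^0_s+x^0_t+1$ inside $K_l$ when $C(e)=l$. Since $R$ is a genuine polynomial ring, $\varphi$ is well defined by this prescription.

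The second step is the routine verification that every generator of $I_\Gamma$ lies in $\ker\varphi$: $(x^{i+1}_s)^{p_0}=x^i_s$ holds by construction of $K_{-1}$; $x^0_e=x^0_s+x^0_t+1$ holds by the definition of $\varphi$; and $(x^{i+1}_e)^{p_{l+1}}=x^i_e$ holds by construction of $K_l$, using that $x^0_s+x^0_t+1$ belongs to the set $E_l$ of Definition \ref{def:ZFC Definition of the field} precisely when $C(e)=l$. Hence $\varphi$ factors through $R_\Gamma=R/I_\Gamma$, giving $\bar\varphi\colon R_\Gamma\to K_{N-1}$. By Proposition \ref{cla:I-is-prime}, $R_\Gamma$ is an integral domain, so $\bar\varphi$ extends to a homomorphism $K_\Gamma=\mathrm{Frac}(R_\Gamma)\to K_{N-1}$; as a homomorphism out of a field it is injective, and its image is a subfield of $K_{N-1}$ containing $L$ and every generator $x^i_s,x^i_e$ used to build $K_{N-1}$, hence equals $K_{N-1}$. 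So $\bar\varphi$ induces an isomorphism $K_\Gamma\cong K_{N-1}$; it is \emph{canonical}, being the unique field isomorphism that fixes $L$ pointwise and respects the chosen generators (in particular it identifies $t\in X$ with $x^0_t$).

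For clause (2), I would first handle a one-vertex extension $X_1=X_0\cup\{t\}$, $t\notin X_0$, and then iterate. In that case the proof of Proposition \ref{cla:I-is-prime} (via Remark \ref{rem:SituationOfLemma}) produces a canonical isomorphism $R_{\Gamma_1}=L[Y_{\Gamma_1}]/I_{\Gamma_1}\cong R_{\Gamma_0}[Y_t]/I_t$, and by Lemma \ref{lem:MainLemma}\,(\ref{enu:An-equivalent-definition}) the right-hand side is an integral domain whose fraction field is the extension of $F=K_{\Gamma_0}$ described there; in particular the ``constants'' inclusion $R_{\Gamma_0}\hookrightarrow R_{\Gamma_0}[Y_t]/I_t$ is injective. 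Composing yields a canonical injection $R_{\Gamma_0}\hookrightarrow R_{\Gamma_1}$, which passes to fraction fields to give $K_{\Gamma_0}\subseteq K_{\Gamma_1}$. For a general inclusion of finite graphs I iterate over the finitely many vertices of $X_1\setminus X_0$; for infinite $\Gamma_1$ I use that $R_{\Gamma_1}=\bigcup\{R_{\Gamma'}\mid\Gamma'\subseteq\Gamma_1\ \text{finite}\}$ is a directed union of the domains $R_{\Gamma'}$ along the injections just built, hence itself a domain with $K_{\Gamma_1}=\bigcup\{K_{\Gamma'}\}$, and the compatible inclusions $K_{\Gamma'_0}\subseteq K_{\Gamma_1}$ for finite $\Gamma'_0\subseteq\Gamma_0$ assemble to $K_{\Gamma_0}\subseteq K_{\Gamma_1}$; this directed-union description is also what justifies the ``we may assume $\Gamma$ finite'' reduction inside Proposition \ref{cla:I-is-prime} for infinite $\Gamma$.

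The substantive content --- that $I_\Gamma$ captures \emph{all} polynomial relations among the root towers, equivalently that the maps above are injective --- is not reproved here: it is exactly Proposition \ref{cla:I-is-prime} and Lemma \ref{lem:MainLemma}\,(\ref{enu:An-equivalent-definition}), so the present lemma is essentially bookkeeping. Accordingly, the only point that needs care is the mismatch between the two orders of construction (Definition \ref{def:ZFC Definition of the field} adjoins roots one color at a time over all vertices at once, while Proposition \ref{cla:I-is-prime} proceeds one vertex at a time) and the need for a field of fractions at each intermediate stage; the universal-mapping argument used for clause (1) sidesteps the ordering issue completely, since it never refers to the order in which roots are adjoined.
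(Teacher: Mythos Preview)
Your overall strategy is the same as the paper's --- both ultimately rest on Lemma \ref{lem:MainLemma}(\ref{enu:An-equivalent-definition}) and the one-vertex induction underlying Proposition \ref{cla:I-is-prime} --- and your treatment of clause (2) is correct and somewhat more explicit than the paper's terse ``(2) follows from (1) for finite $\Gamma$''.

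There is, however, a genuine logical slip in your argument for clause (1). You write: ``By Proposition \ref{cla:I-is-prime}, $R_\Gamma$ is an integral domain, so $\bar\varphi$ extends to a homomorphism $K_\Gamma=\mathrm{Frac}(R_\Gamma)\to K_{N-1}$.'' That implication is false: a ring homomorphism out of an integral domain does not automatically extend to the fraction field (think of $\mathbb{Z}\to\mathbb{Z}/p\mathbb{Z}$). What you need is that $\bar\varphi$ is \emph{injective}, equivalently $\ker\varphi=I_\Gamma$ rather than merely $I_\Gamma\subseteq\ker\varphi$. Knowing $I_\Gamma$ is prime is necessary for $K_\Gamma$ to exist, but it does not by itself rule out $I_\Gamma\subsetneq\ker\varphi$.

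You recognize this in your final paragraph, but the fix you cite there is precisely the inductive argument the paper uses: the \emph{proof} of Proposition \ref{cla:I-is-prime} (not just its statement) together with Lemma \ref{lem:MainLemma}(\ref{enu:An-equivalent-definition}) identifies $R_{\Gamma_1}\cong R_{\Gamma_0}[Y_t]/I_t$ as a subring of the root-tower field over $K_{\Gamma_0}$, and iterating over the vertices of $\Gamma$ gives the embedding $R_\Gamma\hookrightarrow K_{N-1}$. So your ``direct universal-mapping'' argument is not really a shortcut: the injectivity step collapses back to the paper's induction on $|\Gamma|$. The cleanest way to present it is the paper's: run the induction directly, which simultaneously builds the isomorphism and proves it is one.
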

\begin{proof}
(1) follows from Lemma \ref{lem:MainLemma}, (\ref{enu:An-equivalent-definition})
by induction on the size of $\Gamma$, similarly to the proof of Proposition
\ref{cla:I-is-prime}. (2) follows from (1) for finite $\Gamma$,
which is enough. 
\end{proof}
From now on, fix some $\Gamma$.
\begin{defn}
For each $Y\subseteq X$, let $\Gamma_{Y}$ be the induced subgraph
generated by $Y$ (i.e. $\Gamma_{Y}=\left\langle Y,E\upharpoonright Y\right\rangle $)
and let $R_{Y}=R_{\Gamma_{Y}}$, $K_{Y}=K_{\Gamma_{Y}}$. 
\end{defn}
Some properties of $K_{\Gamma}$:
\begin{lem}
\label{lem:SimpleLemma}For $\Gamma$ as in Lemma \ref{lem:Graph-color}, 
\begin{enumerate}
\item \label{enu:p-highFinite}For any prime $p$, if $a\in K_{Y}$ for
some $Y\subseteq X$ and is $\phigh$ in $K_{\Gamma}$ then $a$ is
already $\phigh$ in $K_{Y}$. 
\item \label{enu:X_s_indi}For each $i<\omega$, the set $\left\{ x_{s}^{i}\left|\, s\in X\right.\right\} $
is algebraically independent over $L$. 
\item \label{enu:NoNewAlgebraic}If $X_{1}\subseteq X_{2}$ then $K_{X_{1}}$
is relatively algebraically closed in $K_{X_{2}}$ (in particular
$L$ is r.a.c in $K_{\Gamma}$). 
\end{enumerate}
\end{lem}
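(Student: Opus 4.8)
The three clauses are all ``no new structure appears when you enlarge $X$'' statements, and the natural strategy is to reduce each of them to the finite case and then invoke the relevant clause of Lemma~\ref{lem:MainLemma} via the description of the field extension given in Remark~\ref{rem:SituationOfLemma}. The key soft observation, used throughout, is that $K_\Gamma=\bigcup\{K_Y\mid Y\subseteq X\text{ finite}\}$ (since every element of $R_\Gamma$, being a quotient of a polynomial ring, involves only finitely many of the generators $x_s^i,x_e^i$, hence lies in some $R_Y$ with $Y$ finite), and similarly that if $Y_0\subseteq Y_1$ are finite then $K_{Y_0}\subseteq K_{Y_1}$ is exactly an extension of the type covered by Lemma~\ref{lem:MainLemma}, obtained by adjoining the vertices of $Y_1\setminus Y_0$ one at a time (iterating Remark~\ref{rem:SituationOfLemma}).

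\emph{Clause (\ref{enu:p-highFinite}).} Suppose $a\in K_Y$ is $\phigh$ in $K_\Gamma$, witnessed by a sequence $\langle a_i\mid i<\omega\rangle$ with $a_0=a$, $a_{i+1}^p=a_i$. Each $a_i$ lies in some finite $K_{Y_i}$; I want to show the $a_i$ can be found inside $K_Y$ itself. It suffices to treat the case of adjoining a single new vertex $t$, i.e.\ $K_{Y}\subseteq K_{Y\cup\{t\}}$, and then iterate / take unions. In that one-step extension, Remark~\ref{rem:SituationOfLemma} puts us in the situation of Lemma~\ref{lem:MainLemma} with $F=K_Y$ and $p=p_0$. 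If $p\ne p_0$ and $p\notin\{p_{l+1}\mid l<N\}$, clause~(\ref{enu:s-high}) says every $p$-high element of the bigger field already lies in $F=K_Y$, so each $a_i\in K_Y$ and we are done. If $p=p_0$, clause~(\ref{enu:p-high}) says each $p$-high element of $K_{Y\cup\{t\}}$ has the form $c\cdot (x_t^i)^m$ with $c$ a $p_0$-high element of $K_Y$; since $a=a_0\in K_Y$, matching this form forces $m=0$, so $a$ is $p_0$-high already in $K_Y$ --- in fact the witnessing sequence can be taken inside $K_Y$. If instead $p=p_{l+1}$ for some $l$, one argues symmetrically, exchanging the roles of the prime $p_0$ with the prime $p_{l+1}$ attached to the edge-variables (this is the case the lemma is set up to allow, via the set $\{p_0,\dots,p_{n-1}\}$), again using clause~(\ref{enu:s-high})/(\ref{enu:p-high}) to trap the witnesses in $F$. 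Taking unions over the finitely many new vertices and over an increasing chain of finite $Y$'s completes the clause.

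\emph{Clauses (\ref{enu:X_s_indi}) and (\ref{enu:NoNewAlgebraic}).} Algebraic (in)dependence is a finitary notion: a putative algebraic relation among $\{x_s^i\mid s\in X\}$ over $L$ involves only finitely many $s$, so (\ref{enu:X_s_indi}) reduces to the case of finite $X$, and there it follows by induction on $|X|$ --- adjoining one vertex $t$ keeps $x_t^i$ transcendental over the previous field by the construction in Lemma~\ref{lem:MainLemma} (the $z_i=x_t^i$ are transcendental over $F$ there, and $F$ is relatively algebraically closed in $K$ by clause~(\ref{enu:transExten})), and then independence is preserved. For (\ref{enu:NoNewAlgebraic}), write $K_{X_2}=\bigcup\{K_Y\mid X_1\subseteq Y\subseteq X_2,\ Y\setminus X_1\text{ finite}\}$; relative algebraic closedness is preserved under such directed unions, so it is enough to show $K_{X_1}$ is relatively algebraically closed in $K_{X_1\cup\{t\}}$ for a single new vertex $t$, and in the finite case --- this is precisely clause~(\ref{enu:transExten}) of Lemma~\ref{lem:MainLemma} applied through Remark~\ref{rem:SituationOfLemma} (with $F=K_{X_1}$). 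Iterating the single-vertex step and passing to the union gives the general statement; taking $X_1=\emptyset$ (so $K_{X_1}=L$) gives the parenthetical remark.

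\emph{Main obstacle.} The only real work is bookkeeping: verifying that in the one-vertex extension $K_Y\subseteq K_{Y\cup\{t\}}$ the hypotheses of Lemma~\ref{lem:MainLemma} genuinely hold with $F=K_Y$ --- in particular that the polynomials $T_v=X+x_s^0+1$ are nonconstant, not divisible by $X$ (which needs $x_s^0+1\ne0$, true since $x_s^0$ is transcendental over $L$), separable, and pairwise relatively prime over $K_Y$ --- and that the ``$p$ vs.\ $p_0$ vs.\ $p_{l+1}$'' case split in clause~(\ref{enu:p-highFinite}) is exhaustive given that all the $p_i$ are distinct odd primes $\ne r$. Everything else is directed-union gymnastics. (One should also note that since all the finite subcases can be run in $\mathbb{L}$ by Remark~\ref{Rem:L}, the possible use of Choice inside Lemma~\ref{lem:MainLemma} is harmless.)
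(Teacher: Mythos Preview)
Your treatment of clauses (\ref{enu:X_s_indi}) and (\ref{enu:NoNewAlgebraic}) is essentially the paper's proof: reduce to finite sets, then to a single-vertex extension, and invoke clause~(\ref{enu:transExten}) of Lemma~\ref{lem:MainLemma} via Remark~\ref{rem:SituationOfLemma}. That part is fine.

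Your argument for clause (\ref{enu:p-highFinite}), however, is both more complicated than necessary and has a genuine gap. The gap is in the reduction step: you assert that ``it suffices to treat the case of adjoining a single new vertex $t$,'' but to invoke clauses~(\ref{enu:s-high}) or~(\ref{enu:p-high}) of Lemma~\ref{lem:MainLemma} you need $a$ to be $p$-high \emph{in $K_{Y\cup\{t\}}$}, not merely in $K_\Gamma$. The witnessing sequence $\langle a_i\rangle$ lives in $K_\Gamma$, and there is no single finite $Y'\supseteq Y$ containing all the $a_i$, so you cannot get started on your induction. Your ``iterate / take unions'' does not address this. (Separately, the ``symmetric'' argument for $p=p_{l+1}$ is hand-wavy: Remark~\ref{rem:SituationOfLemma} fixes $p=p_0$, and exchanging the roles requires a different presentation of the extension, as is done with some care in the proof of Claim~\ref{cla:p-highForm}.)

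The paper avoids all of this by observing that (\ref{enu:p-highFinite}) and (\ref{enu:X_s_indi}) follow immediately from (\ref{enu:NoNewAlgebraic}). For (\ref{enu:p-highFinite}): if $\langle a_i\rangle$ witnesses that $a$ is $p$-high in $K_\Gamma$, then each $a_i$ satisfies $a_i^{p^i}=a\in K_Y$, so $a_i$ is algebraic over $K_Y$; by (\ref{enu:NoNewAlgebraic}) with $X_1=Y$, $X_2=X$, this forces $a_i\in K_Y$, and the \emph{same} sequence witnesses $p$-highness inside $K_Y$. No case analysis on the prime, no single-vertex reduction for this clause. You should reorganize: prove (\ref{enu:NoNewAlgebraic}) first, then derive (\ref{enu:p-highFinite}) in one line.
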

\begin{proof}
(\ref{enu:p-highFinite}) and (\ref{enu:X_s_indi}) follows from (\ref{enu:NoNewAlgebraic}).
For (\ref{enu:NoNewAlgebraic}), we may assume $X_{1},X_{2}$ are
finite, and then it is enough to prove it for the case $X_{2}=X_{1}\cup\left\{ t\right\} ,t\notin X_{1}$.
Now use Remark \ref{rem:SituationOfLemma}, and clause (\ref{enu:transExten})
of Lemma \ref{lem:MainLemma}. 
\end{proof}
Now we shall define the isomorphism from $\Aut\left(\Gamma\right)$
to $\Aut\left(K_{\Gamma}\right)$:
\begin{prop}
\label{prop:definie_epsilon}For $\Gamma$ as in \ref{lem:Graph-color},
there is a canonical injective homomorphism $\sigma:\Aut\left(\Gamma\right)\to\Aut\left(K_{\Gamma}\right)$
defined by $\sigma\left(\varphi\right)\left(x_{t}^{i}\right)=x_{\varphi\left(t\right)}^{i}$,
and $\sigma\left(\varphi\right)\left(x_{e}^{i}\right)=x_{\varphi\left(e\right)}^{i}$,
for $\varphi\in\Aut\left(\Gamma\right)$ and all $t\in X,e\in E$. \end{prop}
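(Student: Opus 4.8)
The plan is to first build $\sigma(\varphi)$ at the level of the polynomial ring $R=L\left[Y_{\Gamma}\right]$ and then push it down through the ideal $I_{\Gamma}$ and the field of fractions. Fix $\varphi\in\Aut\left(\Gamma\right)$. Since $\varphi$ is a bijection of $X$ carrying edges to edges, it induces a bijection of $E$, and by Lemma~\ref{lem:Graph-color}(\ref{enu:Preservation}) this bijection preserves the coloring $C$. Hence the assignment $x_t^i\mapsto x_{\varphi(t)}^i$, $x_e^i\mapsto x_{\varphi(e)}^i$ is a bijection of the algebraically independent generating set $Y_{\Gamma}$ of $R$, so it extends uniquely to a ring automorphism $\hat\sigma(\varphi)$ of $R$ fixing $L$ pointwise, with inverse $\hat\sigma(\varphi^{-1})$.

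Next I would check that $\hat\sigma(\varphi)$ carries $I_{\Gamma}$ onto itself; for this it is enough to see that it permutes the distinguished generators of $I_{\Gamma}$. Indeed $\left(x_s^{i+1}\right)^{p_0}-x_s^i$ is sent to $\left(x_{\varphi(s)}^{i+1}\right)^{p_0}-x_{\varphi(s)}^i$, again of the first type since $\varphi(s)\in X$; the relation $x_e^0-x_s^0-x_t^0-1$ for $e=\left\{ s,t\right\} $ is sent to $x_{\varphi(e)}^0-x_{\varphi(s)}^0-x_{\varphi(t)}^0-1$, again of the second type because $\varphi(e)=\left\{ \varphi(s),\varphi(t)\right\} $; and $\left(x_e^{i+1}\right)^{p_{l+1}}-x_e^i$ with $C(e)=l$ is sent to $\left(x_{\varphi(e)}^{i+1}\right)^{p_{l+1}}-x_{\varphi(e)}^i$, again of the third type since $C(\varphi(e))=l$ by color preservation. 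Therefore $\hat\sigma(\varphi)(I_{\Gamma})=I_{\Gamma}$, so $\hat\sigma(\varphi)$ descends to an automorphism of $R_{\Gamma}=R/I_{\Gamma}$, which — $I_{\Gamma}$ being prime by Proposition~\ref{cla:I-is-prime} — extends uniquely to a field automorphism $\sigma(\varphi)$ of $K_{\Gamma}$. It is characterized by the stated action on the $x_t^i$ and $x_e^i$, which generate $K_{\Gamma}$ as a field over $L$; no choices enter the construction, so $\sigma$ is canonical.

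Finally I would verify that $\sigma$ is an injective group homomorphism. For the homomorphism property, $\sigma(\varphi\circ\psi)$ and $\sigma(\varphi)\circ\sigma(\psi)$ are field automorphisms of $K_{\Gamma}$ agreeing on every $x_t^i$ and $x_e^i$ (both send $x_t^i\mapsto x_{\varphi(\psi(t))}^i$ and $x_e^i\mapsto x_{\varphi(\psi(e))}^i$), hence they coincide; likewise $\sigma(\id)=\id$. For injectivity, if $\sigma(\varphi)=\id$ then $x_{\varphi(t)}^0=x_t^0$ for all $t\in X$; since $\left\{ x_s^0\left|\, s\in X\right.\right\} $ is algebraically independent over $L$ by Lemma~\ref{lem:SimpleLemma}(\ref{enu:X_s_indi}) — in particular its members are pairwise distinct — we conclude $\varphi(t)=t$ for all $t$, i.e. $\varphi=\id$. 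The argument is essentially formal; the one substantive input is the color-preservation clause of Lemma~\ref{lem:Graph-color}, without which the root relations attached to the $x_e^i$ would not be respected, so this is precisely where the preliminary reduction to $\Gamma^{+}$ is used. (The genuinely hard direction, that $\sigma$ is onto, is treated separately afterwards.)
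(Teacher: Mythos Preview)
Your proof is correct and follows essentially the same approach as the paper: well-definedness via the color-preservation clause of Lemma~\ref{lem:Graph-color}, and injectivity via the algebraic independence of $\{x_s^0\mid s\in X\}$ from Lemma~\ref{lem:SimpleLemma}(\ref{enu:X_s_indi}). The paper's own proof is considerably terser (it merely asserts well-definedness from color preservation and calls the homomorphism property ``obvious''), so you have simply unpacked the details the authors left implicit.
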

\begin{proof}
$\sigma$ is well defined because of clause (\ref{enu:Preservation})
of Lemma \ref{lem:Graph-color}. $\sigma$ is obviously a homomorphism.
It is injective: If $\sigma\left(\varphi\right)=\id$, while $\varphi\left(s\right)=t\neq s$,
then $x_{s}^{0}=\sigma\left(\varphi\right)\left(x_{s}^{0}\right)=x_{t}^{0}$
--- a contradiction to clause (\ref{enu:X_s_indi}) of Lemma \ref{lem:SimpleLemma}. 
\end{proof}
Our aim is to prove that $\sigma$ is onto. We start with:
\begin{claim}
\label{cla:p-highForm}Suppose that $a\in K_{\Gamma}$ is $\phigh$,
then: 
\begin{enumerate}
\item \label{enu:p=00003D3Dp_0}If $p=p_{0}$ then $a$ can be written in
the form $\varepsilon\cdot\prod\left\{ \left(x_{s}^{n_{s}}\right)^{m_{s}}\left|\, s\in X_{0}\right.\right\} $
for some finite $X_{0}\subseteq X$ , some choice of $m_{s}\in\mathbb{Z},n_{s}<\omega$
for $s\in X_{0}$ and a $p_{0}\mbox{-high}$ element $\varepsilon\in L$.
\item \label{enu:p=00003D3Dp_{l+1}}If $p=p_{l+1}$ for some $l<N$ then
$a$ can be written in the form $\varepsilon\cdot\prod\left\{ \left(x_{e}^{n_{e}}\right)^{m_{e}}\left|\, e\in E_{0}\right.\right\} $
for some finite $E_{0}\subseteq E$ such that $C\upharpoonright E_{0}=l$,
some choice of $n_{e}<\omega,m_{e}\in\mathbb{Z}$ for $e\in E_{0}$
and a $p_{l+1}\mbox{-high}$ element $\varepsilon\in L$.
\end{enumerate}
\end{claim}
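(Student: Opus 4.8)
The plan is to reduce the claim to the case of a finite graph and then argue by induction on the number of vertices, peeling off one vertex at each step by means of Lemma \ref{lem:MainLemma}. For the reduction: every element of $K_{\Gamma}$ involves only finitely many of the generators $x_{s}^{i},x_{e}^{i}$, so $K_{\Gamma}$ is the directed union of the subfields $K_{Y}$ with $Y\subseteq X$ finite; hence a $p$-high $a\in K_{\Gamma}$ lies in some such $K_{Y}$, and by clause (\ref{enu:p-highFinite}) of Lemma \ref{lem:SimpleLemma} it is already $p$-high in $K_{Y}$. As $\Gamma_{Y}$ is again of the form in Lemma \ref{lem:Graph-color} (an induced subgraph of a disjoint union of stars is again one, and connectedness plays no role here), it suffices to treat finite $\Gamma$, where by Remark \ref{Rem:L} we may use Choice freely. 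We induct on $\left|X\right|$; if $X=\emptyset$ then $K_{\Gamma}=L$ and the claim holds with $\varepsilon=a$ and an empty product.

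For part (\ref{enu:p=00003D3Dp_0}), pick any $t\in X$ and set $\Gamma_{0}=\Gamma_{X\setminus\{t\}}$. By Remark \ref{rem:SituationOfLemma}, $K_{\Gamma_{0}}\subseteq K_{\Gamma}$ is an instance of Lemma \ref{lem:MainLemma} with $z_{i}=x_{t}^{i}$ and $p=p_{0}$; clause (\ref{enu:p-high}) then gives $a=c\cdot\left(x_{t}^{i}\right)^{m}$ for some $i<\omega$, $m\in\mathbb{Z}$ and some $p_{0}$-high $c\in K_{\Gamma_{0}}$. Applying the induction hypothesis to $c$ inside $\Gamma_{0}$ and absorbing the factor $\left(x_{t}^{i}\right)^{m}$ yields $a$ in the desired form.

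For part (\ref{enu:p=00003D3Dp_{l+1}}) the idea is to delete a vertex so that $p_{l+1}$ itself plays the role of the prime $p$ in Lemma \ref{lem:MainLemma}. If $\Gamma$ has no edge of color $l$, delete any $t\in X$, set $\Gamma_{0}=\Gamma_{X\setminus\{t\}}$ (again of the form in Lemma \ref{lem:Graph-color}), and apply Lemma \ref{lem:MainLemma} with $z_{i}=x_{t}^{i}$, $p=p_{0}$, and as its auxiliary primes only those $p_{C(e)+1}$ with $e\ni t$; none of these is $p_{l+1}$, so clause (\ref{enu:s-high}) with $q=p_{l+1}$ places $a$ in $K_{\Gamma_{0}}$ and we conclude by the induction hypothesis. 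Otherwise the $l$-th colored subgraph is a non-empty disjoint union of stars, hence has a vertex $t$ of valency $1$ in it --- that is, $t$ lies on a unique edge $e_{1}=\{t,s_{1}\}$ of color $l$. Put $\Gamma_{0}=\Gamma_{X\setminus\{t\}}$. Since $x_{t}^{0}$ is transcendental over $K_{\Gamma_{0}}$ (Remark \ref{rem:SituationOfLemma}) and $x_{s_{1}}^{0}\in K_{\Gamma_{0}}$, the element $z_{0}:=x_{e_{1}}^{0}=x_{t}^{0}+x_{s_{1}}^{0}+1$ is transcendental over $K_{\Gamma_{0}}$ as well, with $K_{\Gamma_{0}}(z_{0})=K_{\Gamma_{0}}(x_{t}^{0})$. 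Writing $x_{t}^{0}=z_{0}-x_{s_{1}}^{0}-1$ and, for each other edge $e=\{t,s_{e}\}$ at $t$, $x_{e}^{0}=z_{0}+\left(x_{s_{e}}^{0}-x_{s_{1}}^{0}\right)$, exhibits $K_{\Gamma_{0}}\subseteq K_{\Gamma}$ as an instance of Lemma \ref{lem:MainLemma} with $p=p_{l+1}$ and $z_{i}=x_{e_{1}}^{i}$, the auxiliary towers being $x_{t}^{i}$ (prime $p_{0}$) and the $x_{e}^{i}$ for $e\ni t$, $e\neq e_{1}$ (each of prime $p_{C(e)+1}\neq p_{l+1}$, since $e_{1}$ is the only color-$l$ edge at $t$). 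The relevant $T_{v}$ are non-constant linear polynomials, pairwise relatively prime and not divisible by $X$ (by algebraic independence of the vertex variables over $L$), so Lemma \ref{lem:MainLemma} applies. Clause (\ref{enu:p-high}) now gives $a=c\cdot\left(x_{e_{1}}^{i}\right)^{m}$ with $c\in K_{\Gamma_{0}}$ being $p_{l+1}$-high; the induction hypothesis furnishes a finite $E_{0}'\subseteq E_{\Gamma_{0}}$ with $C\upharpoonright E_{0}'=l$ and $c=\varepsilon\cdot\prod\left\{ \left(x_{e}^{n_{e}}\right)^{m_{e}}\left|\,e\in E_{0}'\right.\right\}$, and since $e_{1}$ also has color $l$ we finish with $E_{0}=E_{0}'\cup\{e_{1}\}$.

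I expect the main obstacle to be exactly this last step: Lemma \ref{lem:MainLemma} provides a multiplicative normal form --- clause (\ref{enu:p-high}) --- only for high elements relative to the prime attached to the transcendental generator $z_{0}$, and nothing comparable for the auxiliary primes $p_{k}$. The device that circumvents this is to take the new transcendental generator to be the \emph{edge} variable $x_{e_{1}}^{0}$ rather than the vertex variable $x_{t}^{0}$; the disjoint-union-of-stars shape of each color class (clause (\ref{enu:Coloring}) of Lemma \ref{lem:Graph-color}) is precisely what guarantees a vertex carrying a unique edge of color $l$, so that this substitution creates no collision among the primes $p_{0},\dots,p_{N}$. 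Everything else --- verifying that the $T_{v}$ are admissible, the base case, and reassembling the products --- is routine.
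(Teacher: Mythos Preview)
Your proof is correct and follows essentially the same approach as the paper: reduce to finite $X$, induct on $\left|X\right|$, and for the $p_{l+1}$ case replace the vertex variable by an edge variable $x_{e_{1}}^{0}$ as the new transcendental $z_{0}$ in Lemma \ref{lem:MainLemma}.

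There is one small organizational difference worth noting. The paper removes an \emph{arbitrary} vertex $t$ at each step and then splits into three cases according to the number of color-$l$ edges at $t$: none (use clause (\ref{enu:s-high})), exactly one (use the edge variable as $z_{0}$, as you do), or more than one (then $t$ is the center of a star, and one instead removes the leaves $s_{1},\dots,s_{k}$, applying the one-edge case $k$ times). You avoid this third case entirely by choosing $t$ from the start to be a leaf in the color-$l$ subgraph whenever such an edge exists. This is a legitimate simplification --- the existence of a leaf is guaranteed precisely by the disjoint-union-of-stars hypothesis --- and it collapses the paper's Cases 2 and 3 into your single second case. The content of the argument (the reparametrization $x_{t}^{0}=z_{0}-x_{s_{1}}^{0}-1$, $x_{e}^{0}=z_{0}+x_{s_{e}}^{0}-x_{s_{1}}^{0}$, and the verification that the resulting $T_{v}$ are admissible) is identical to the paper's Case 2.
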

\begin{proof}
By Lemma \ref{lem:SimpleLemma}, clause (\ref{enu:p-highFinite}),
there is some $X_{0}\subseteq X$ such that $a$ is $\phigh$ in $K_{X_{0}}$.
The proof is by induction on $\left|X_{0}\right|$. The base of the
induction --- $X_{0}=\emptyset$ --- is clear. For the induction step,
we prove that if $X_{0}\subseteq X_{1}$ are finite and $X_{1}=X_{0}\cup\left\{ t\right\} $,
$t\notin X_{0}$, and the claim is true for $X_{0}$, then every $a\in K_{X_{1}}$
which is $\phigh$ has the desired form.

For clause (\ref{enu:p=00003D3Dp_0}), Remark \ref{rem:SituationOfLemma}
implies that we can use Lemma \ref{lem:MainLemma}, clause (\ref{enu:p-high}).

For (\ref{enu:p=00003D3Dp_{l+1}}), we shall use the assumption on
the coloring. 
\begin{caseenv}
\item There is no edge $e_{0}\ni t$ in $\Gamma_{X_{1}}$ such that $C\left(e_{0}\right)=l$.
In that case, we use clause (\ref{enu:s-high}) of Lemma \ref{lem:MainLemma},
and conclude that $a\in K_{X_{0}}$. 
\item There is an edge $e_{0}\ni t$ in $\Gamma_{X_{1}}$ with $C\left(e_{0}\right)=l$,
but only one such edge. If $e_{0}=\left\{ s,t\right\} ,s\in X_{0}$
then $x_{e_{0}}^{0}=x_{s}^{0}+x_{t}^{0}+1\in K_{X_{1}}$ is transcendental
over $K_{X_{0}}$ (because $x_{t}^{0}$ is). In addition $x_{t}^{0}=x_{e_{0}}^{0}-x_{s}^{0}-1$
and for all vertices $r\in X_{0}$ such that $e_{r}=\left\{ t,r\right\} $
is an edge (of some other color), $x_{e_{r}}^{0}=x_{e_{0}}^{0}-x_{s}^{0}+x_{r}^{0}$.
The polynomials $X-x_{s}^{0}-1$, $X-x_{s}^{0}+x_{r}^{0}$ satisfy
the conditions of Lemma \ref{lem:MainLemma}, and so, by clause (\ref{enu:p-high}),
$a$ is of the form $\left(x_{e_{0}}^{i}\right)^{m}\cdot c$ for $c$
which is $p_{l+1}\mbox{-high}$ in $K_{\Gamma_{0}}$ and we are done
(we do not use the lemma in the same way as in Remark \ref{rem:SituationOfLemma}
--- here $z_{0}$ is played by $x_{e_{0}}^{0}$, but it is the same
idea). 
\item There is more than one edge $e_{0}\ni t$ in $\Gamma_{X_{1}}$ with
color $l$. Then $t$ is the center of a star in the subgraph of $\Gamma_{1}$
induced by that color. Assume that $s_{1},\ldots,s_{k}\in X_{0}$
list the vertices such that $C\left(s_{i},t\right)=l$, ($k\geq2$).
Let $X^{-}=X_{0}\backslash\left\{ s_{1},\ldots,s_{k}\right\} $, and
$X'=X^{-}\cup\left\{ t\right\} $. Note that $\left|X'\right|<\left|X_{1}\right|$,
so by the induction hypothesis, the claim is true for $K_{X'}$. $\Gamma_{X_{1}}$
is built from $\Gamma_{X'}$ by adding $s_{1},\ldots,s_{k}$ and in
each step we are in the previous case (because $t$ was the center
of a star), so we are done. 
\end{caseenv}
\end{proof}
\begin{lem}
\label{lem:x_0DoesntHaveRoots}For all $s\in X$, $x_{s}^{0}$ does
not have a $p'$ root for $p'$ a prime different from $p_{0}$. \end{lem}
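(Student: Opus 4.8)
The plan is to reduce to a finite subgraph and then quote clause~(\ref{enu:noMoreRoots}) of Lemma~\ref{lem:MainLemma}. Fix $s\in X$ and a prime $p'\neq p_{0}$, and suppose towards a contradiction that $x_{s}^{0}=a^{p'}$ for some $a\in K_{\Gamma}$. Since $K_{\Gamma}$ is the directed union of the subfields $K_{Y}$ over finite $Y\subseteq X$ --- each element of $R_{\Gamma}=R/I_{\Gamma}$ involves only finitely many of the generators $x_{t}^{i},x_{e}^{i}$, hence lies in $R_{Y}$ as soon as $Y$ contains every vertex occurring together with both endpoints of every edge occurring, and $K_{\Gamma}$ consists of quotients of such elements, while the inclusions $K_{Y}\subseteq K_{\Gamma}$ are furnished by the projections $\pi$ appearing in the proof of Proposition~\ref{cla:I-is-prime} --- we may fix a finite $Y\subseteq X$ with $s\in Y$ and $a\in K_{Y}$.

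Next I would work inside $K_{Y}$. As $\Gamma_{Y}$ is finite, Remark~\ref{Rem:L} allows us to assume $\Gamma_{Y}\in\mathbb{L}$ and to invoke Lemma~\ref{lem:MainLemma} even though its proof may use Choice. Writing $Y=Y_{0}\cup\{s\}$ with $s\notin Y_{0}$, the same inductive step used in the proof of Proposition~\ref{cla:I-is-prime} presents $K_{Y}$ as the extension of $K_{Y_{0}}$ obtained by adjoining $p_{0}$-power roots of $x_{s}^{0}$ and, for each edge $\{s,r\}\in E$ of color $l$, $p_{l+1}$-power roots of $x_{s}^{0}+x_{r}^{0}+1$. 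This is exactly the configuration of Remark~\ref{rem:SituationOfLemma}: take $F=K_{Y_{0}}$, $p=p_{0}$, $z_{i}=x_{s}^{i}$, and $T_{\{s,r\}}=X+x_{r}^{0}+1$; these polynomials meet the hypotheses of Lemma~\ref{lem:MainLemma} because the $x_{r}^{0}$ are algebraically independent over $L$ (Lemma~\ref{lem:SimpleLemma}(\ref{enu:X_s_indi})), so none of them equals $-1$ and they are pairwise distinct. Hence Lemma~\ref{lem:MainLemma} applies to the extension $K_{Y_{0}}\subseteq K_{Y}$.

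Finally, clause~(\ref{enu:noMoreRoots}) of Lemma~\ref{lem:MainLemma} states that, since $p'$ is a prime different from $p=p_{0}$, the element $z_{0}=x_{s}^{0}$ has no $p'$-root in $K_{Y}$; this contradicts $a\in K_{Y}$ with $a^{p'}=x_{s}^{0}$, and the lemma follows. I do not expect a genuine obstacle here: the only step needing attention is that, whichever $s\in Y$ we single out, $K_{Y}$ really can be written as a one-vertex extension of $K_{Y\setminus\{s\}}$ as above, but this is immediate from the symmetric presentation of $I_{\Gamma_{Y}}$ by generators and is the very induction step already carried out for Proposition~\ref{cla:I-is-prime}. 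It is worth noting that clause~(\ref{enu:noMoreRoots}) rules out only $p'=p_{0}$ and puts no constraint linking $p'$ to the edge-primes $p_{1},\ldots,p_{N}$ --- which is precisely the generality needed, since the $p_{l+1}$-power roots adjoined at an edge incident to $s$ are roots of $x_{s}^{0}+x_{r}^{0}+1$, not of $x_{s}^{0}$.
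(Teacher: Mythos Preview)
Your proof is correct and follows essentially the same approach as the paper: reduce to a finite subgraph, view $K_{Y}$ as a one-vertex extension of $K_{Y\setminus\{s\}}$ via Remark~\ref{rem:SituationOfLemma}, and invoke clause~(\ref{enu:noMoreRoots}) of Lemma~\ref{lem:MainLemma}. The paper phrases this as ``induction on $|X_{0}|$'', but the only role of the induction is to have $K_{Y_{0}}$ available as a base field, which you obtain directly from Proposition~\ref{cla:I-is-prime}; your write-up is simply more explicit about why the reduction is legitimate.
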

\begin{proof}
Again, it is enough to prove this finite $X_{0}\subseteq X$, and
the proof is by induction on $\left|X_{0}\right|$, and follows from
clause (\ref{enu:noMoreRoots}) of Lemma \ref{lem:MainLemma}.
\end{proof}
This is the main proposition: 
\begin{prop}
\label{pro:MainClaim1} Assume $\varphi\in\Aut\left(K_{\Gamma}\right)$
and that $\left\{ s_{0},t_{0}\right\} \in E$ of color $l$. Then
there is an edge $\left\{ s_{1},t_{1}\right\} \in E$ of the same
color such that $\varphi\left(x_{s_{0}}^{0}\right)=x_{s_{1}}^{0}$
and $\varphi\left(x_{t_{0}}^{0}\right)=x_{t_{1}}^{0}$. \end{prop}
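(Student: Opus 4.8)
The plan is to exploit the $p$-high structure classified in Claim \ref{cla:p-highForm}, applied to the three relevant primes $p_0$ and $p_{l+1}$. Fix $\varphi\in\Aut\left(K_\Gamma\right)$ and an edge $\left\{s_0,t_0\right\}\in E$ of color $l$. Since $x_{s_0}^0$ is $p_0$-high in $K_\Gamma$ (the tower of $p_0$-roots $x_{s_0}^i$ witnesses this), so is $\varphi\left(x_{s_0}^0\right)$, and likewise for $t_0$. By Claim \ref{cla:p-highForm}(\ref{enu:p=00003D3Dp_0}), each of $\varphi\left(x_{s_0}^0\right)$, $\varphi\left(x_{t_0}^0\right)$ can be written as $\varepsilon\cdot\prod_{s\in X_0}\left(x_s^{n_s}\right)^{m_s}$ with $\varepsilon\in L$ a $p_0$-high element of $L$ and $m_s\in\Zz$, $n_s<\omega$. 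Similarly, $x_{e_0}^0=x_{s_0}^0+x_{t_0}^0+1$ is $p_{l+1}$-high (witnessed by $x_{e_0}^i$), so $\varphi\left(x_{e_0}^0\right)=\varphi\left(x_{s_0}^0\right)+\varphi\left(x_{t_0}^0\right)+1$ is $p_{l+1}$-high, and by Claim \ref{cla:p-highForm}(\ref{enu:p=00003D3Dp_{l+1}}) it equals $\varepsilon'\cdot\prod_{e\in E_0}\left(x_e^{n_e}\right)^{m_e}$ with $C\upharpoonright E_0=l$ and $\varepsilon'\in L$ a $p_{l+1}$-high element of $L$.

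The heart of the argument is then a careful comparison of these three expressions inside the field $K_{X}$ for a suitably large finite $X$ containing all vertices appearing, using the relative algebraic closedness and transcendence statements of Lemma \ref{lem:SimpleLemma} together with Lemma \ref{lem:x_0DoesntHaveRoots}. First I would argue that in the monomial expression for $\varphi\left(x_{s_0}^0\right)$ there is exactly one vertex $s_1$ with nonzero exponent, and its exponent is $\pm1$ and $n_{s_1}=0$; otherwise $\varphi\left(x_{s_0}^0\right)$ would either be algebraic over a strictly smaller subfield (contradicting that it is transcendental, being the image under an automorphism of the transcendental $x_{s_0}^0$) or would have a forbidden root or fail to have the full $p_0$-root tower. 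The exponent cannot be $-1$ nor can $\varepsilon\neq 1$ for a similar reason: $x_{s_0}^0$ has a $p_0$-root tree and no $p'$-roots for $p'\neq p_0$ by Lemma \ref{lem:x_0DoesntHaveRoots}, which forces $\varepsilon$ to have no nontrivial $p'$-roots either; combined with the structure of $p_0$-high elements of $L=\Qq$ or $\Ff_r$ (essentially only roots of unity, i.e.\ $\pm1$ times powers), one deduces $\varphi\left(x_{s_0}^0\right)=\pm x_{s_1}^0$, and then the sign must be $+$ because $x_{s_1}^0+1$ and $-x_{s_1}^0+1$ have different $p$-high behavior via the edge relation. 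The same analysis gives $\varphi\left(x_{t_0}^0\right)=x_{t_1}^0$ for some vertex $t_1$.

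Finally I would use the $p_{l+1}$-high expression: $\varphi\left(x_{e_0}^0\right)=x_{s_1}^0+x_{t_1}^0+1$ must be a (single) monomial $\left(x_e^{0}\right)^{\pm1}$ with $C(e)=l$ and, by the same transcendence/root considerations, $e=\left\{s_1,t_1\right\}$ with exponent $+1$; in particular $\left\{s_1,t_1\right\}\in E$ and has color $l$. This is the conclusion. The main obstacle I anticipate is the bookkeeping in the three-way comparison: one must simultaneously control that (a) exactly one vertex/edge occurs, (b) the exponent is $+1$ and the index is $0$, and (c) the coefficient $\varepsilon$ is trivial, and these three facts feed into each other — e.g.\ one uses the edge relation $x_{e_0}^0=x_{s_0}^0+x_{t_0}^0+1$ to pin down signs and coefficients that the vertex-analysis alone leaves ambiguous. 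Relative algebraic closedness (Lemma \ref{lem:SimpleLemma}(\ref{enu:NoNewAlgebraic})) is the tool that lets one localize each computation to a small finite subgraph where Lemma \ref{lem:MainLemma} applies verbatim, and Lemma \ref{lem:x_0DoesntHaveRoots} is what rules out the spurious coefficients and wrong exponents.
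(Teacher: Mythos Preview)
Your setup via Claim \ref{cla:p-highForm} is correct and matches the paper, but the core of your plan has a genuine gap. You propose to analyse $\varphi\left(x_{s_0}^0\right)$ \emph{in isolation} and argue that the monomial $\varepsilon\cdot\prod_{s\in X_0}\left(x_s^{n_s}\right)^{m_s}$ must collapse to a single $x_{s_1}^0$, invoking transcendence, forbidden roots, and the $p_0$-root tower. None of these criteria suffice: an element such as $x_a^0\cdot x_b^0$ (for distinct $a,b$) or $\left(x_{s_1}^{1}\right)^{m}$ is still $p_0$-high (the tower is $x_a^i\cdot x_b^i$, resp.\ $\left(x_{s_1}^{i+1}\right)^{m}$), is transcendental over $L$ (Lemma \ref{lem:SimpleLemma}(\ref{enu:NoNewAlgebraic})), and has no obvious $p'$-root for $p'\neq p_0$. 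The multiplicative $p_0$-high structure alone places essentially no constraint on how an automorphism can act on it, so the step ``exactly one vertex with exponent $\pm1$'' cannot be obtained this way.

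The paper's proof does not attempt to treat $f_1=\varphi\left(x_{s_0}^0\right)$ separately. Instead it writes down the three expressions from Claim \ref{cla:p-highForm} and immediately exploits the \emph{additive} relation $f_1+f_2+1=f$: after raising to a suitable $p_{l+1}$-power and rewriting everything in the common variables $y_t=x_t^{i}$, this becomes an identity in the rational function field $L\left(y_t\,\middle|\,t\in T\right)$ by algebraic independence (Lemma \ref{lem:SimpleLemma}(\ref{enu:X_s_indi})). Discrete valuations then force all exponents to be nonnegative, and substitutions $y_r\mapsto0$ together with degree comparisons collapse $E_0$ to a single edge, force $i=k=0$, $\varepsilon_1=\varepsilon_2=1$, and $m_r=m_w=m$; finally Lemma \ref{lem:x_0DoesntHaveRoots} rules out $m>1$. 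In short, the additive equation is not a tie-breaker at the end but the engine of the whole argument --- you should reorganise your proof around it.
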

\begin{proof}
Let $f_{1}=\varphi\left(x_{s_{0}}^{0}\right)$, $f_{2}=\varphi\left(x_{t_{0}}^{0}\right)$,
$f=\varphi\left(x_{s_{0}}^{0}+x_{t_{0}}^{0}+1\right)=f_{1}+f_{2}+1$.
From Claim \ref{cla:p-highForm} it follows that
\begin{itemize}
\item $f_{1}=\varepsilon_{1}\cdot\prod\left\{ \left(x_{s}^{i_{s}}\right)^{m_{s}}\left|\, s\in X_{0}\right.\right\} $,
$f_{2}=\varepsilon_{2}\cdot\prod\left\{ \left(x_{t}^{i_{s}}\right)^{m_{t}}\left|\, t\in Y_{0}\right.\right\} $
and \\
$f=\varepsilon_{3}\cdot\prod\left\{ \left(x_{e}^{i_{e}}\right)^{m_{e}}\left|\, e\in E_{0}\right.\right\} $,
\end{itemize}
where $X_{0},Y_{0}\subseteq X$ and $E_{0}\subseteq E$ are finite
nonempty; $i_{s}<\omega$, $m_{s}\in\mathbb{Z}$ for $s\in X_{0}$;
$i_{t}<\omega$, $m_{t}\in\mathbb{Z}$ for $t\in Y_{0}$; and $E_{0}$
is homogeneous of color $l$ and $i_{e}<\omega$, $m_{e}\in\mathbb{Z}$
for $e\in E_{0}$. Let $p=p_{l+1}$, so $f$ is $\phigh$.

We can assume that unless $i_{s}=0$, $p_{0}\nmid m_{s}$ for $s\in X_{0}\cup Y_{0}$,
and that unless $i_{e}=0$, $p\nmid m_{e}$ for $e\in E_{0}$. 

Raising the equation $f_{1}+f_{2}+1=f$ by $p^{k}$ where $k=\max\left\{ i_{e}\left|\, e\in E_{0}\right.\right\} $,
we have an equation of the form
\[
\left(\varepsilon_{1}\prod\left(x_{s}^{i_{s}}\right)^{m_{s}}+\varepsilon_{2}\prod\left(x_{t}^{i_{t}}\right)^{m_{t}}+1\right)^{p^{k}}=\varepsilon_{3}^{p^{k}}\prod\left(x_{r}^{0}+x_{w}^{0}+1\right)^{p^{k-i_{\left\{ r,w\right\} }}m_{\left\{ r,w\right\} }}.
\]
Let $i=\max\left\{ i_{t}\left|\, t\in X_{0}\cup Y_{0}\right.\right\} $.
We can replace $x_{t}^{i_{t}}$ by $\left(x_{t}^{i}\right)^{p_{0}^{i-i_{t}}}$
and the same for $x_{s}^{i_{s}}$. Also replace $x_{r}^{0}$ by $\left(x_{r}^{i}\right)^{p_{0}^{i}}$
and the same for $x_{w}^{0}$. For $t\in T:=X_{0}\cup Y_{0}\cup\bigcup E_{0}$,
let $y_{t}=x_{t}^{i}$, then we get 
\[
\left(\varepsilon_{1}\prod\left(y_{s}\right)^{p_{0}^{i-i_{s}}m_{s}}+\varepsilon_{2}\prod\left(y_{t}\right)^{p_{0}^{i-i_{t}}m_{t}}+1\right)^{p^{k}}=\varepsilon_{3}^{p^{k}}\prod\left(\left(y_{r}\right)^{p_{0}^{i}}+\left(y_{w}\right)^{p_{0}^{i}}+1\right)^{p^{k-i_{\left\{ r,w\right\} }}m_{\left\{ r,w\right\} }}.
\]
By Lemma \ref{lem:SimpleLemma}, these elements are algebraically
independent so this is an equation in the field of rational functions
$L\left(y_{t}\left|\, t\in T\right.\right)$.

The next step is to see that the exponents ($m_{t}$ and $m_{\left\{ r,w\right\} }$)
are non-negative. For that we use valuations. 

Recall that for any field, $F$ and any irreducible $g\in F\left[X\right]$
there is a unique discrete (i.e. with value group $\mathbb{Z}$) valuation
on the field of rational functions $F\left(t\right)$ defined by $v\left(g\left(t\right)\right)=1$,
$v\upharpoonright F^{\times}=0$. In this case, $v\upharpoonright F\left[t\right]\geq0$
and $v\left(m\left(t\right)\right)>0$ iff $g|m$ for $m\in F\left[X\right]$.
This is the $g$-adic valuation.

Suppose $m_{t_{0}}$ is negative for some $t\in X_{0}\cup Y_{0}$.
Consider the discrete valuation $v$ on the field $L\left(y_{t}\left|\, t\in T\right.\right)$
defined by $v\left(y_{t_{0}}\right)=1$, $v\upharpoonright L\left(y_{t}\left|\, t\neq t_{0}\right.\right)^{\times}=0$.
Then on the left hand side we get $v\left(LHS\right)<0$ while on
the right hand side, $v\left(RHS\right)=0$ --- contradiction.

Suppose $m_{\left\{ r,w\right\} }<0$ for some $\left\{ r,w\right\} \in E_{0}$.
Consider the valuation $v$ on the field $L\left(y_{t}\left|\, t\in T\right.\right)$
defined by $v\left(g\left(y_{r}\right)\right)=1$, $v\upharpoonright L\left(y_{t}\left|\, t\neq r\right.\right)^{\times}=0$
where $g$ is any irreducible polynomial dividing $X^{p_{0}^{i}}+\left(y_{w}\right)^{p_{0}^{i}}+1$.
So $v\left(\left(y_{r}\right)^{p_{0}^{i}}+\left(y_{w}\right)^{p_{0}^{i}}+1\right)>0$,
while $g$ does not divide $\left(X^{p_{0}^{i}}+\left(y_{w'}\right)^{p_{0}^{i}}+1\right)$
for $w\neq w$' (they relatively prime) so $v\left(RHS\right)<0$.
On the other hand, since $v\left(y_{r}\right)=0$, $v\left(RHS\right)\geq0$
--- contradiction.

Hence we can consider this equation as one in the polynomial ring
$L\left[y_{t}\left|\, t\in T\right.\right]$. Moreover, since these
elements are algebraically independent, each one appearing in the
left hand side must appear in the right hand side and vice versa,
i.e. $T=X_{0}\cup Y_{0}=\bigcup E_{0}$. 

By examining the free factor, $\varepsilon_{3}^{p^{k}}=1$.

By substituting $y_{r}$ and $y_{w}$ with $0$ for some $r,w$ ,
we can show that $E_{0}=\left\{ \left\{ r,w\right\} \right\} $ (so
$k=i_{\left\{ r,w\right\} }$) and that there are no mixed monomials
in the left hand side, i.e. we get an equation of the form
\[
\left(\varepsilon_{1}\left(y_{r}\right)^{p_{0}^{i-i_{r}}m_{r}}+\varepsilon_{2}\left(y_{w}\right)^{p_{0}^{i-i_{w}}m_{w}}+1\right)^{p^{k}}=\left(\left(y_{r}\right)^{p_{0}^{i}}+\left(y_{w}\right)^{p_{0}^{i}}+1\right)^{m_{\left\{ r,w\right\} }}.
\]
Suppose $i=i_{r}$ and $i\neq0$, then $p_{0}\nmid m_{r}$, by examining
the degree of $y_{r}$, we get a contradiction, so $i=0$ and by choice
of $i$, $i_{w}=0$ as well. In the same way we can deduce that $k=0$.
From this it follows that $\varepsilon_{1}=\varepsilon_{2}=1$ and
$m_{r}=m_{w}$. So we have
\[
f_{1}+f_{2}+1=\left(x_{r}^{0}\right)^{m_{r}}+\left(x_{w}^{0}\right)^{m_{w}}+1=\left(x_{r}^{0}+x_{w}^{0}+1\right)^{m_{\left\{ r,w\right\} }}=f.
\]
So $\left\{ r,w\right\} $ is an edge of color $l$, $m:=m_{\left\{ r,w\right\} }=m_{w}=m_{r}$,
and $m=1$ or a power of $r$ (the characteristic). 

So finally we have that $\varphi\left(x_{t_{0}}^{0}\right)$ is a
power of $m$ which is a power of $r$. This implies that $x_{t_{0}}^{0}$
itself has an $m$-root. But if $m>1$, this is a contradiction, because
$x_{t_{0}}^{0}$ has no $r$-roots by Lemma \ref{lem:x_0DoesntHaveRoots}.

This concludes the proof of the proposition.\end{proof}
\begin{cor}
\label{cor:MainClaim2}The map $\sigma:\Aut\left(\Gamma\right)\to\Aut\left(K_{\Gamma}\right)$
is a bijection.\end{cor}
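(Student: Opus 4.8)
The plan is to prove surjectivity of $\sigma$; injectivity is already Proposition~\ref{prop:definie_epsilon}. Fix $\varphi\in\Aut(K_{\Gamma})$, and note first that $\varphi$ fixes the prime field $L$ pointwise. Assuming $\Gamma$ has at least one edge (so, being connected, every vertex lies on some edge), Proposition~\ref{pro:MainClaim1} applied to any edge $\{s,t\}\ni s$ of color $l$ produces an edge $\{s',t'\}$ of color $l$ with $\varphi(x_{s}^{0})=x_{s'}^{0}$ and $\varphi(x_{t}^{0})=x_{t'}^{0}$; since the $x_{s}^{0}$ ($s\in X$) are algebraically independent over $L$ by Lemma~\ref{lem:SimpleLemma}(\ref{enu:X_s_indi}), the vertex $s'$ depends only on $s$, so we obtain a well-defined map $\psi\colon X\to X$ with $\varphi(x_{s}^{0})=x_{\psi(s)}^{0}$.

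Next I would check that $\psi\in\Aut(\Gamma)$ and respects the coloring. Running the same construction on $\varphi^{-1}$ yields a map $\psi'$, and from $x_{s}^{0}=\varphi^{-1}(x_{\psi(s)}^{0})=x_{\psi'(\psi(s))}^{0}$ together with algebraic independence one gets $\psi'\circ\psi=\id=\psi\circ\psi'$, so $\psi$ is a bijection of $X$. If $\{s,t\}\in E$ has color $l$, Proposition~\ref{pro:MainClaim1} gives an edge $\{s',t'\}$ of color $l$ with $\varphi(x_{s}^{0})=x_{s'}^{0}$, $\varphi(x_{t}^{0})=x_{t'}^{0}$, and algebraic independence forces $s'=\psi(s)$, $t'=\psi(t)$; thus $\{\psi(s),\psi(t)\}\in E$ with color $l$. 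Applying this to $\varphi^{-1}$ gives the converse implication, so $\psi$ is a color-preserving automorphism of $\Gamma$.

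Then I would show $\sigma(\psi)=\varphi$ by checking that they agree on a set of field generators of $K_{\Gamma}$ over its prime field, namely on all the $x_{s}^{i}$ and $x_{e}^{i}$ (both maps fix the prime field). They agree on the $x_{s}^{0}$ by construction, and on $x_{e}^{0}=x_{s}^{0}+x_{t}^{0}+1$ by additivity, using that $\psi(e)=\{\psi(s),\psi(t)\}\in E$ so that $x_{\psi(e)}^{0}=x_{\psi(s)}^{0}+x_{\psi(t)}^{0}+1$ is a defining relation. To climb to higher $i$: from $(x_{s}^{i})^{p_{0}^{i}}=x_{s}^{0}$ we see that $\varphi(x_{s}^{i})$ and $x_{\psi(s)}^{i}$ are two $p_{0}^{i}$-th roots of $x_{\psi(s)}^{0}$, so their ratio is a $p_{0}^{i}$-th root of unity in $K_{\Gamma}$; since $L$ is relatively algebraically closed in $K_{\Gamma}$ (Lemma~\ref{lem:SimpleLemma}(\ref{enu:NoNewAlgebraic})) this ratio lies in $L$, and by the choice of the $p_{i}$ (odd primes not dividing $r-1$) the only such root of unity in $L$ is $1$, whence $\varphi(x_{s}^{i})=x_{\psi(s)}^{i}$. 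The same argument with $p_{l+1}$ in place of $p_{0}$, where $l=C(e)=C(\psi(e))$, handles the $x_{e}^{i}$. Hence $\varphi$ and $\sigma(\psi)$ agree on a generating set, so $\varphi=\sigma(\psi)$ and $\sigma$ is onto.

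I expect the only delicate point to be this last "climbing" step: propagating the equality $\varphi(x_{s}^{0})=x_{\psi(s)}^{0}$ up from the bottom elements to all of their $p_{0}$- and $p_{l+1}$-power roots, which works precisely because $K_{\Gamma}$ contains no nontrivial roots of unity of these orders — a consequence of the relative algebraic closedness of $L$ (Lemma~\ref{lem:SimpleLemma}(\ref{enu:NoNewAlgebraic})) together with the choice of primes. The construction of $\psi$ and the verification that it is a color-preserving graph automorphism are then a routine unwinding of Proposition~\ref{pro:MainClaim1} and the algebraic-independence statements in Lemma~\ref{lem:SimpleLemma}.
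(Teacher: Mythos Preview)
Your proof is correct and follows essentially the same route as the paper's: reduce to surjectivity via Proposition~\ref{prop:definie_epsilon}, use Proposition~\ref{pro:MainClaim1} and connectedness to define the graph automorphism, and then propagate to all generators using the absence of nontrivial $p_i$-th roots of unity in $L$ (which, as you observe, suffices once $L$ is relatively algebraically closed in $K_\Gamma$). Your write-up is simply more explicit than the paper's, spelling out the well-definedness of $\psi$, the use of $\varphi^{-1}$ for bijectivity, and the climbing step, all of which the paper leaves to the reader.
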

\begin{proof}
Recall that all that is left is to show that $\sigma$ is onto (by
Proposition \ref{prop:definie_epsilon}). 

Let $\varphi\in\Aut\left(K_{\Gamma}\right)$. Let $t\in X$ and suppose
$\left\{ t,t_{0}\right\} \in E$. By Proposition \ref{pro:MainClaim1},
$\varphi\left(x_{t}^{0}\right)=x_{t'}^{0}$ for the some $t'\in X$.
Since the graph $\Gamma$ is connected, we can define $\varepsilon\in\Aut\left(\Gamma\right)$
by $\varepsilon\left(t\right)=t'$ (note that $t'$ does not depend
on the choice of $t_{0}$). Proposition \ref{pro:MainClaim1} implies
that $\varepsilon$ is indeed an automorphism. 

Since there are no $p_{i}$-roots of unity in $L$ for all the primes
we chose, it follows then that $\varphi\left(x_{t}^{i}\right)=x_{\varepsilon\left(t\right)}^{i}$
and that $\varphi\left(x_{e}^{i}\right)=x_{\varepsilon\left(e\right)}^{i}$,
and hence $\varphi=\sigma\left(\varepsilon\right)$.
\end{proof}
We still have to prove that $\left|K_{\Gamma}\right|\leq\left|X^{\left\langle <\omega\right\rangle }\right|$. 
\begin{lem}
\label{lem:Intersection} If $X_{i}\subseteq X$ ($i=1,2$) are two
subsets of the vertices set then $K_{X_{1}}\cap K_{X_{2}}=K_{X_{1}\cap X_{2}}$. \end{lem}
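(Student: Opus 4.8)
The plan is to reduce immediately to the case where $X_1, X_2$ are finite, since $K_\Gamma$ is the directed union of the $K_Y$ over finite $Y \subseteq X$, and both sides of the asserted equality commute with this directed union (the inclusion $K_{X_1 \cap X_2} \subseteq K_{X_1} \cap K_{X_2}$ is trivial from monotonicity, so only the reverse inclusion needs work, and any element of $K_{X_1} \cap K_{X_2}$ already lies in $K_{Y_1} \cap K_{Y_2}$ for some finite $Y_1 \subseteq X_1$, $Y_2 \subseteq X_2$). So assume $X_1, X_2$ finite and set $W = X_1 \cap X_2$. We want: if $a \in K_{X_1} \cap K_{X_2}$ then $a \in K_W$.

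The key tool is clause (\ref{enu:transExten}) of Lemma \ref{lem:MainLemma} together with Remark \ref{rem:SituationOfLemma}: whenever $\Gamma_0 \subseteq \Gamma_1$ with one extra vertex, $K_{\Gamma_0}$ is relatively algebraically closed in $K_{\Gamma_1}$, hence (iterating) $K_W$ is relatively algebraically closed in $K_{X_1}$ and in $K_{X_2}$, and indeed $K_{Y}$ is r.a.c.\ in $K_{Y'}$ for any $Y \subseteq Y'$ by Lemma \ref{lem:SimpleLemma}(\ref{enu:NoNewAlgebraic}). The idea is a transcendence-basis / linear-disjointness argument: I would build $K_{X_1 \cup X_2}$ by first listing the vertices of $W$, then the vertices of $X_1 \setminus W$, then the vertices of $X_2 \setminus W$, and track how transcendence degree grows. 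An element $a \in K_{X_1}$ that is \emph{not} in $K_W$ is transcendental over $K_W$ (by Lemma \ref{lem:SimpleLemma}(\ref{enu:NoNewAlgebraic}), since $K_W$ is r.a.c.\ in $K_{X_1}$). The plan is then to show that $K_{X_1}$ and $K_{X_2}$ are ``free'' over $K_W$ inside $K_{X_1 \cup X_2}$ — concretely, that a transcendence basis of $K_{X_1}$ over $K_W$ stays algebraically independent over $K_{X_2}$ — so that a common element must be algebraic over $K_W$ and hence (again by r.a.c.) lie in $K_W$.

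To make the freeness precise I would argue by induction on $|X_1 \setminus W| + |X_2 \setminus W|$, peeling off one vertex at a time and invoking the explicit form of the extension from Remark \ref{rem:SituationOfLemma}: adjoining a vertex $t \notin X_0$ to $\Gamma_{X_0}$ adjoins $z_0 = x_t^0$ transcendental over $K_{X_0}$, together with a tower of $p_0$-power roots of it and of the edge-elements $x_e^0 = x_s^0 + x_t^0 + 1$ and their $p_{l+1}$-power roots. The crucial point is that the generators added when building $K_{X_1}$ over $K_W$ involve only vertices and edges of $\Gamma_{X_1}$, and edges between $X_1 \setminus W$ and $X_2 \setminus W$ do not exist in the induced subgraphs $\Gamma_{X_1}$ or $\Gamma_{X_2}$ — so the ``new'' transcendentals and their prescribed roots on the $X_1$-side are genuinely independent from those on the $X_2$-side over $K_W$. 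I expect the main obstacle to be exactly this bookkeeping: verifying that when one passes from $K_W$ up to $K_{X_1}$ and separately up to $K_{X_2}$, no relation is forced between the two sides — i.e.\ that $K_{X_1 \cup X_2}$ is (isomorphic to) the field obtained by applying the Lemma \ref{lem:MainLemma} construction on top of $K_{X_1}$ using the vertices of $X_2 \setminus W$ and the edges incident to them, with $K_{X_1}$ playing the role of $F$. Once that ``associativity/commutativity of the construction'' is in place, $K_{X_1}$ is r.a.c.\ in $K_{X_1 \cup X_2}$, so $a \in K_{X_2}$ being over $K_{X_1}$ forces $a \in K_{X_1} \cap K_{X_2}$ to be algebraic over $K_W$ from the $X_2$-side, hence in $K_W$, completing the proof.
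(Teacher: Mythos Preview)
Your approach is essentially correct and relies on the same underlying facts as the paper (relative algebraic closedness from Lemma~\ref{lem:SimpleLemma}(\ref{enu:NoNewAlgebraic}), and that each one-vertex extension has transcendence degree $1$), but you package it as a full linear-disjointness/transcendence-basis argument. Once you observe that $\{x_t^0 : t \in (X_1\cup X_2)\}$ is algebraically independent over $L$ and that $K_{X_2}$ is algebraic over $L(\{x_t^0 : t\in X_2\})$, the set $\{x_t^0 : t \in X_1\setminus W\}$ is algebraically independent over $K_{X_2}$; then any $a \in K_{X_1}\cap K_{X_2}$ transcendental over $K_W$ could be extended to a transcendence basis of $K_{X_1}/K_W$ and would have to stay independent over $K_{X_2}$ --- a contradiction. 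Your second paragraph states this cleanly; the last paragraph's ``$a\in K_{X_2}$ being over $K_{X_1}$ forces $a$ algebraic over $K_W$ from the $X_2$-side'' is garbled and not the right way to finish --- the conclusion really comes from the transcendence-basis argument you already outlined, not from r.a.c.\ of $K_{X_1}$ in $K_{X_1\cup X_2}$.

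The paper's proof is shorter and sidesteps the global freeness machinery by a minimality trick: take $X_1$ of minimal size with $x\in K_{X_1}$; if some $t\in X_1\setminus X_2$, then with $X'=X_1\setminus\{t\}$ one has $x\notin K_{X'}$, so $x$ is transcendental over $K_{X'}$, and since $K_{X_1}/K_{X'}$ has transcendence degree $1$, $x_t^0$ is algebraic over $K_{X'}(x)$. But $x\in K_{X_2}\subseteq K_{X'\cup X_2}$ and $x_t^0$ is transcendental over $K_{X'\cup X_2}$ --- contradiction, so $X_1\subseteq X_2$ and $x\in K_{X_1}=K_{X_1\cap X_2}$. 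This uses only the single fact ``one new vertex $=$ transcendence degree $1$'' at one spot, rather than assembling transcendence bases on both sides. Your route buys a conceptually clean statement (the two subfields are free over their intersection), at the cost of more bookkeeping; the paper's route is a two-line contradiction once the minimal $X_1$ is chosen.
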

\begin{proof}
We may assume that $X_{1},X_{2}$ are finite. Assume $x\in K_{X_{1}}\cap K_{X_{2}}$
and that $\left|X_{1}\right|$ is minimal with respect to $x\in K_{X_{1}}$.
If $X_{1}\subseteq X_{2}$ then we are done. If not, let $t\in X_{1}\backslash X_{2}$
be some vertex, and let $X'=X_{1}\backslash\left\{ t\right\} $. So
$x\notin K_{X'}$, and $x$ is transcendental over $K_{X'}$ while
$x_{t}^{0}$ is algebraic over $K_{X'}\left(x\right)$. Let $X'_{2}=X'\cup X_{2}$,
$X_{3}=X'_{2}\cup\left\{ t\right\} $. We have $x\in K_{X_{2}}\subseteq K_{X'_{2}}$,
and $x_{t}^{0}\in K_{X_{3}}$ is transcendental over $K_{X'_{2}}$.
This is a contradiction, because $x_{t}^{0}$ is algebraic over $K_{X'}\left(x\right)\subseteq K_{X'_{2}}$.
Hence there is no such $t$ i.e. $X_{1}\subseteq X_{2}$. 
\end{proof}
And now it is easy to define an injective map $\Psi:K_{\Gamma}\to X^{\left\langle <\omega\right\rangle }$.
Define by induction on $n$ injective function $\Psi_{Y}:K_{Y}\to X^{\left\langle <\omega\right\rangle }$
for $\left|Y\right|\leq n$ such that $Y_{1}\subseteq Y_{2}$ implies
$\Psi_{Y_{1}}\subseteq\Psi_{Y_{2}}$. This is enough, since by the
lemma above, $\bigcup\left\{ \Psi_{Y}\left|\, Y\subseteq X,\left|Y\right|<\omega\right.\right\} $
is an injection from $K_{\Gamma}$ to $X^{\left\langle <\omega\right\rangle }$. 

For the construction of $\Psi_{Y}:K_{Y}\to X^{\left\langle <\omega\right\rangle }$,
the idea is that given $x\in K_{Y}$ such that $x\notin K_{Y'}$ for
any $Y'\subsetneq Y$ we can code $x$ using the set $Y$ and the
set of codes that Lemma \ref{lem:MainLemma}, clause (6) gives us
for any choice of $Y'\subsetneq Y$ of size $\left|Y\right|-1$. 

This (and Lemma \ref{lem:MainLemma}, clause (6)) was the reason we
chose $X^{\left\langle <\omega\right\rangle }$ and not $X^{<\omega}$:
in order to code $x\in K_{\Gamma}$, we need first to code the minimal
set $Y$ such that $x\in K_{Y}$, and then $x$ can be coded in $\left|Y\right|$
different ways, depending on the choice of $\left|Y'\right|$ as above.
However, there is no well ordering of $Y$, so we have no way of ordering
these codes. For instance, the code of $x_{t}^{0}+x_{s}^{0}$ for
$s,t\in X$, should be $\left\{ \left\langle s\right\rangle ,\left\langle t\right\rangle ,\ldots\right\} $.

\section{\label{sec:some-technical-lemmas}some technical lemmas on fields}

This section is devoted to technical lemmas concerning fields. We
may use Choice here --- see Remark \ref{Rem:L}.

First, some simple and known facts: 
\begin{fact}
\label{lem:Abel-Theorem}(Abel's Theorem) Suppose that $p$ is prime
and $K$ is a field. Then the polynomial $X^{p}-a$ is irreducible
iff $a$ does not have a $p$-th root in $K$. \end{fact}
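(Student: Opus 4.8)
The plan is to prove both directions of the equivalence, the content lying entirely in the implication ``reducible $\Rightarrow$ has a $p$-th root''. For the easy direction, if $a=b^{p}$ with $b\in K$ then $b$ is a root of $X^{p}-a$, so $X-b$ divides it; since $p\geq2$ this is a nontrivial factorization (and if $a=0$ then $X^{p}-a=X^{p}$ is trivially reducible), so $X^{p}-a$ is not irreducible. So assume from now on that $X^{p}-a$ is reducible over $K$ and $a\neq 0$ (the case $a=0$ being trivial), and aim to produce $c\in K$ with $c^{p}=a$. I would split into two cases according to whether $\operatorname{char}K=p$.

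In the case $\operatorname{char}K=p$, fix a root $\alpha$ of $X^{p}-a$ in an algebraic closure; then $X^{p}-a=(X-\alpha)^{p}$, so the minimal polynomial of $\alpha$ over $K$ is $(X-\alpha)^{j}$ for some $1\leq j\leq p$, and its constant term $\pm\alpha^{j}$ lies in $K$. Since $X^{p}-a$ is assumed reducible we have $j<p$, hence $\gcd(j,p)=1$; picking integers $u,v$ with $uj+vp=1$ and using $\alpha^{j},\alpha^{p}=a\in K$ gives $\alpha=(\alpha^{j})^{u}a^{v}\in K$, so $c:=\alpha$ works. In the case $\operatorname{char}K\neq p$, note $X^{p}-a$ is separable (its derivative $pX^{p-1}$ shares no root with it, as $a\neq0$), so over an algebraic closure it factors as $\prod_{i=0}^{p-1}(X-\zeta^{i}\alpha)$ with distinct roots, where $\alpha$ is one root and $\zeta$ a primitive $p$-th root of unity. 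Writing $X^{p}-a=f\cdot g$ with $f,g\in K[X]$ monic and $1\leq d:=\deg f<p$, uniqueness of factorization in $\bar K[X]$ forces $f=\prod_{i\in S}(X-\zeta^{i}\alpha)$ for some $S$ with $|S|=d$, so $f(0)=(-1)^{d}\zeta^{m}\alpha^{d}\in K$ for some integer $m$. Put $b:=\zeta^{m}\alpha^{d}\in K$; then $b^{p}=\zeta^{mp}\alpha^{dp}=(\alpha^{p})^{d}=a^{d}$. Since $p$ is prime and $0<d<p$ we have $\gcd(d,p)=1$, so choosing $u,v\in\Zz$ with $ud+vp=1$ yields $a=a^{ud+vp}=(a^{d})^{u}(a^{p})^{v}=(b^{p})^{u}(a^{v})^{p}=(b^{u}a^{v})^{p}$ with $b^{u}a^{v}\in K$, the desired $p$-th root.

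The only genuine subtlety is the bookkeeping in the characteristic $\neq p$ case: using separability to conclude that a monic factor of $X^{p}-a$ over $K$ is exactly the product of a subset of the linear factors over $\bar K$, and then the elementary B\'ezout step $\gcd(d,p)=1$ that upgrades ``$a^{d}$ is a $p$-th power'' to ``$a$ is a $p$-th power''. The characteristic $p$ case has to be treated separately, since there are no nontrivial $p$-th roots of unity available, but it is short. No issues with Choice arise here, and in any case Remark \ref{Rem:L} permits its use.
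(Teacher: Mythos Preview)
Your proof is correct and is the standard argument for this classical result. The paper does not actually give a proof of this statement: it is recorded as a \emph{Fact} and simply cited as Abel's Theorem, so there is no ``paper's own proof'' to compare against. Your treatment of both characteristics is clean; the B\'ezout trick to pass from $\alpha^{j}\in K$ (respectively $a^{d}$ being a $p$-th power) to $\alpha\in K$ (respectively $a$ being a $p$-th power) is exactly the right move, and your remark that no Choice issues arise is appropriate in the context of the paper.
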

\begin{lem}
\label{lem:Kummer}Let $n$ be a positive integer and let $K$ be
a field of characteristic $r$, where $r=0$ or $r\nmid n$, which
contains a primitive $n$-th root of unity. Let $0\neq a\in K$ and
suppose $z$ is a root of the equation $X^{n}=a$. If $b\in K\left(z\right)$
satisfies $b^{n}\in K$, then $b=c\cdot z^{k}$ for some $0\leq k<n$
and $c\in K$. \end{lem}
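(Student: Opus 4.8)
The plan is to run the classical Kummer-theory argument, which is legitimate here since $K(z)/K$ is a \emph{finite} extension and Choice is available in this section. First I would check that $K(z)/K$ is Galois: the polynomial $X^{n}-a$ is separable (as $r=0$ or $r\nmid n$ its formal derivative $nX^{n-1}$ is nonzero, and since $a\neq0$ it shares no root with $X^{n}-a$), and all of its roots have the form $\zeta^{i}z$ for $\zeta\in K$ a primitive $n$-th root of unity; hence $K(z)$ is the splitting field over $K$ of the separable polynomial $X^{n}-a$. Note also that $\mu_{n}:=\{\zeta^{i}:0\le i<n\}\subseteq K$ and $|\mu_{n}|=n$.

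Next I would analyse $G:=\operatorname{Gal}(K(z)/K)$. For $\sigma\in G$ we have $\sigma(z)^{n}=a=z^{n}$, so $\chi(\sigma):=\sigma(z)/z\in\mu_{n}\subseteq K$; since $\chi(\sigma)\in K$, the map $\chi\colon G\to\mu_{n}$ is a group homomorphism, and it is injective because $\chi(\sigma)=1$ forces $\sigma(z)=z$, hence $\sigma=\id$. Therefore $G$ is cyclic, say $G=\langle\sigma_{0}\rangle$ of order $d:=[K(z):K]$, and $\eta:=\chi(\sigma_{0})$ has order exactly $d$; in particular $d\mid n$ and $\mu_{d}=\langle\eta\rangle\subseteq\mu_{n}\subseteq K$.

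Then I would carry out the main step, which is short. We may assume $b\neq0$ (otherwise take $c=0$, $k=0$). Applying $\sigma_{0}$ to $b^{n}\in K$ gives $\sigma_{0}(b)^{n}=b^{n}$, so $\mu:=\sigma_{0}(b)/b\in\mu_{n}\subseteq K$. Since $\mu\in K$, induction on $j$ yields $\sigma_{0}^{j}(b)=\mu^{j}b$, and from $\sigma_{0}^{d}=\id$ and $b\neq0$ we get $\mu^{d}=1$, i.e. $\mu\in\mu_{d}=\langle\eta\rangle$; write $\mu=\eta^{k}$ with $0\le k<d\le n$. Then $\sigma_{0}(bz^{-k})=\mu b\cdot\eta^{-k}z^{-k}=bz^{-k}$, so $bz^{-k}$ is fixed by the generator $\sigma_{0}$, hence by all of $G$, hence lies in the fixed field $K$; setting $c:=bz^{-k}\in K$ gives $b=cz^{k}$ with $0\le k<n$, as required.

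The only delicate point — and the step where the hypotheses are genuinely used — is the verification that $K(z)/K$ is Galois with cyclic Galois group embedding into $\mu_{n}$: separability needs $r=0$ or $r\nmid n$, while both the normality of $X^{n}-a$ over $K$ and the inclusion $\mu_{d}\subseteq K$ (needed in order to write $\mu=\eta^{k}$) use $\zeta\in K$; everything else is a routine computation. Alternatively one could avoid the Galois language and instead appeal to Fact \ref{lem:Abel-Theorem}, observing that $X^{d}-z^{d}$ is irreducible over $K$ for the least $d$ with $z^{d}\in K$, so that $[K(z):K]=d$ and $d\mid n$; but the Galois route seems cleanest.
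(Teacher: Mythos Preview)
Your proof is correct and is precisely the standard Kummer-theory argument; the paper itself does not spell out a proof but simply refers to Lang, \emph{Algebra}, VI.8, Theorem 8.2, so you have effectively unpacked that citation. The Galois-theoretic route you take (embedding $\operatorname{Gal}(K(z)/K)$ into $\mu_{n}$ via $\sigma\mapsto\sigma(z)/z$ and then matching the eigenvalue of $b$ against a power of the generator's eigenvalue) is exactly the content of the cited theorem.
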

\begin{proof}
This is an easy consequence of Kummer Theory. See \cite[VI.8, Theorem 8.2]{Lang}.\end{proof}
\begin{lem}
\label{lem:dimOfExtension}Let $K$ be a field containing all roots
of unity. Assume that $t$ solves the equation $X^{p}=a$ for some
$a\in K$ and prime $p$, and $L=K\left(t\right)$. Then if $b\in L$
satisfies $b^{q^{m}}\in K$ for some prime $q\neq p,m<\omega$, then
$b\in K$. \end{lem}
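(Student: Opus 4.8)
The plan is to exploit the tower structure $K \subseteq L = K(t)$ with $t^p = a$ and to reduce everything to Lemma \ref{lem:Kummer}. First I would dispose of the trivial cases: if $a$ already has a $p$-th root in $K$ then $L = K$ (since $K$ contains the $p$-th roots of unity, so $X^p - a$ splits over $K$ once it has one root) and there is nothing to prove; so I may assume by Abel's Theorem (Fact \ref{lem:Abel-Theorem}) that $X^p - a$ is irreducible over $K$, hence $[L:K] = p$.

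Next I would set $b \in L$ with $b^{q^m} \in K$, $q \neq p$ prime, and argue by induction on $m$. The key point is that $b^{q^{m-1}} =: b'$ lies in $L$ and satisfies $(b')^q \in K$; if I can show $b' \in K$ then, replacing $b$ by $b'$ and lowering $m$, the induction closes once I also handle the base case $m=1$. So the heart of the matter is: if $b \in L$ and $b^q \in K$ for a prime $q \neq p$, then $b \in K$. Here I would consider the subfield $K(b) \subseteq L$. Since $[L:K] = p$ is prime, either $K(b) = K$ (done) or $K(b) = L$, so $[K(b):K] = p$. On the other hand, $b$ satisfies $X^q - b^q \in K[X]$, so the minimal polynomial of $b$ over $K$ divides $X^q - c$ where $c = b^q \in K$; its degree is therefore a divisor of $q$, i.e. $1$ or $q$. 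Since that degree must equal $p \neq q$, it cannot be $q$, and cannot be $1$ unless $b \in K$ — contradiction in the nontrivial case. Hence $b \in K$.

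Actually, to be careful about the case $c$ having a $q$-th root in $K$ (so $X^q - c$ is reducible), the divisor-of-$q$ claim still stands since a factor of a polynomial that splits into linear factors over $K$ (when a $q$-th root and all $q$-th roots of unity are present) has the $K$-roots among those, so $b \in K$ directly; and if $X^q - c$ is irreducible over $K$ then $[K(b):K] = q \neq p$, again impossible. Either way $b \in K$, completing the base case, and then the induction on $m$ finishes the proof. Alternatively, and perhaps more cleanly, one can invoke Lemma \ref{lem:Kummer} directly with $n = p$: writing $b^p \in K$ would give $b = c z^k$; but here the hypothesis is $b^{q^m} \in K$, not $b^p \in K$, so the degree/minimal-polynomial argument above is the more natural route and I would present that.

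The main obstacle I anticipate is purely bookkeeping: making sure the reduction from $b^{q^m} \in K$ down to $b^q \in K$ is stated correctly (one does need $b^{q^{m-1}} \in L$, which is automatic, and that it still has $q$-th power in $K$, which follows since $(b^{q^{m-1}})^q = b^{q^m} \in K$), and being careful that ``degree of minimal polynomial divides $q$'' together with ``equals $p$'' forces the degree to be $1$ precisely because $p \neq q$ and $q$ is prime. None of this requires Choice, consistent with the standing remark that we are free to use it here anyway.
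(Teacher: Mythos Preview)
Your argument is correct and follows the same degree-counting idea as the paper: $[K(b):K]$ must simultaneously divide $[L:K]\in\{1,p\}$ and be a power of $q$, hence equals $1$. The paper is slightly more direct in that it observes $[K(b):K]$ is a power of $q$ in one stroke (via the tower $K\subseteq K(b^{q^{m-1}})\subseteq\cdots\subseteq K(b)$, each step of degree dividing $q$) rather than inducting on $m$, but this is a cosmetic difference.
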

\begin{proof}
By Abel's theorem, and since $K$ contains all $p$ and $q$ roots
of unity, $\left[L:K\right]=p$ or $\left[L:K\right]=1$, while $\left[K\left(b\right):K\right]$
is a power of $q$, so it must be $1$. \end{proof}
\begin{lem}
\label{lem:PurelyTrans}Assume $K$ and $L$ are fields such that: 
\begin{enumerate}
\item \label{enu:KTransL}$K\supseteq L$ and is a $L$ is relatively algebraically
closed in $K$.
\item $K$ is a finite algebraic extension of the simple transcendental
extension $L\left(y\right)$. 
\end{enumerate}
Then if $p$ is a prime and $x\in K$ is $\phigh$, then $x\in L$. \end{lem}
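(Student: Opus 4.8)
The plan is to reduce to the situation where $K$ contains all roots of unity and then read off the statement from the structure of the extension $L(y)\subseteq K$ together with a height argument. First I would observe that we may pass to the algebraic closure of $L$ inside a bigger field: adjoin to $L$ all roots of unity to get $L'$, and let $K' = K\cdot L'$ (the compositum inside a common algebraic closure). Since $L$ is relatively algebraically closed in $K$ and $K$ is a finite extension of $L(y)$, the field $K'$ is still a finite extension of $L'(y)$ and $L'$ is still relatively algebraically closed in $K'$ --- here $y$ remains transcendental over $L'$ because $L'/L$ is algebraic. So without loss of generality we may assume $L$ contains all roots of unity.

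Next, suppose $x\in K$ is $p$-high, witnessed by a sequence $\langle x_i\mid i<\omega\rangle$ with $x_0 = x$ and $x_{i+1}^p = x_i$. The key point is that each $x_i$ lies in the fixed finite extension $K$ of $L(y)$, so the tower $L(y)\subseteq L(y)(x_0)\subseteq L(y)(x_1)\subseteq\cdots$ has bounded degree. By Abel's Theorem (Fact \ref{lem:Abel-Theorem}), at each stage either $x_{i}$ already has a $p$-th root in $L(y)(x_i)$ or $[L(y)(x_{i+1}):L(y)(x_i)] = p$; since the degrees are bounded, the latter can happen only finitely often, so from some point on $x_i$ has $p$-th roots of all orders \emph{inside} $L(y)(x_N)$ for a fixed $N$. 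Replacing $x$ by $x_N$ (it suffices to show $x_N\in L$, since then $x = x_N^{p^N}\in L$) we may assume $x$ is $p$-high inside a finite algebraic extension $M = L(y)(x)$ of $L(y)$, and in fact $p$-high inside $M$ itself.

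Now I would run the valuation-theoretic core of the argument, exactly in the spirit of the $p$-high computations in the previous section. Since $L$ is relatively algebraically closed in $K$ and hence in $M$, and $L$ contains all roots of unity, Kummer theory (Lemma \ref{lem:Kummer}, applied repeatedly along the tower of $p$-power-radical extensions generated by the $x_i$'s) forces each $x_i$ to have the form $c_i\cdot y^{k_i}$ modulo adjusting by lower radicals --- more precisely, I would show $x = c\cdot y^{m}$ for some $c\in L$ and $m\in\Zz$, by comparing the $g$-adic valuations on $L(y)$ for irreducible $g\in L[y]$: if $x = f/g$ in lowest terms with $f,g\in L[y]$ and $x$ has $p^n$-th roots for all $n$ inside the finite extension $M$, then each irreducible factor of $f$ or $g$ would have to be a $p^n$-th power in the value group up to bounded index, which is impossible unless $f$ and $g$ are (up to units in $L^\times$) monomials in $y$; and then the constant part $c$ must itself be $p$-high in $L$. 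Finally, if $m\neq 0$, then $y^m$ --- hence $y$ itself, after clearing the $L$-part --- would be $p$-high in $M$, contradicting that $M/L(y)$ is finite (a transcendental $y$ cannot have infinitely many independent $p$-power roots in a finite extension of $L(y)$, again by the bounded-degree version of Abel's Theorem). Therefore $m = 0$ and $x = c\in L$, as desired.

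The main obstacle I anticipate is making the Kummer-theoretic descent clean: one has to be careful that relative algebraic closedness of $L$ in $K$ really does propagate to each intermediate radical extension $L(y)(x_i)$, and that the "unique $g$-adic valuation" bookkeeping correctly handles the interaction between the constant field $L$ (where $x$ may legitimately be $p$-high, e.g. if $L = \Ff_r$) and the transcendental variable $y$ (where it may not). Concretely, the delicate step is showing that a $p$-high element of a finite extension of $L(y)$ cannot have a nonzero $y$-valuation at any place --- this is where the boundedness of $[M:L(y)]$ is used essentially, via Fact \ref{lem:Abel-Theorem}, to rule out an infinite strictly increasing tower of degree-$p$ radical extensions. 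Everything else is routine once the reduction to "$L$ contains all roots of unity and $x$ is $p$-high inside a fixed finite extension of $L(y)$" is in place.
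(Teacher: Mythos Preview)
Your outline overcomplicates matters and has a gap in the valuation step. You write ``if $x = f/g$ in lowest terms with $f,g\in L[y]$'', but at that point you have only arranged that $x$ is $p$-high in a finite extension $M = L(y)(x)$ of $L(y)$; you have not shown $x\in L(y)$, so the $g$-adic bookkeeping as written does not apply. This could be repaired by extending each place of $L(y)$ to $M$ and using discreteness of the value group (the ramification index is bounded by $[M:L(y)]$, so $v(x)/p^i$ cannot stay in $\tfrac{1}{e}\mathbb{Z}$ for all $i$ unless $v(x)=0$), but you never say this, and the Kummer reduction to ``$L$ contains all roots of unity'' plays no role in that repair.

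The paper's argument avoids all of this with a single observation you miss: \emph{replace $y$ by $x$ as the transcendental generator}. If $x\in K\setminus L$ then, since $L$ is relatively algebraically closed in $K$, $x$ is transcendental over $L$; since $K$ has transcendence degree $1$ over $L$ and is finite over $L(y)$, $K$ is finite algebraic over $L(x)$. Now the tower $L(x)\subseteq L(x_1)\subseteq L(x_2)\subseteq\cdots\subseteq K$ stabilizes, so some $x_{l+1}\in L(x_l)$; but $x_l$ is transcendental over $L$ and a pure transcendental $L(x_l)$ contains no $p$-th root of $x_l$. That is the entire proof---no roots of unity, no Kummer theory, no valuations. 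Your tower-stabilization idea is the right one; the point you are missing is to run it over $L(x)$ rather than $L(y)$.
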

\begin{proof}
(This proof is taken from \cite{prohleEndomorphism}). Assume $x\in K\backslash L$.
Then $y$ is algebraic over $L\left(x\right)$ (by (\ref{enu:KTransL})).
Denote by $x_{m}$ for $m<\omega$ the $p^{m}$-th root of $x$ given
in Definition \ref{def:p-high}. Then we have $L\left(x\right)\subseteq L\left(x_{1}\right)\subseteq L\left(x_{2}\right)\subseteq\ldots\subseteq K$.
As $K/L\left(x\right)$ is algebraic of finite degree, $L\left(x_{l}\right)=L\left(x_{l+1}\right)$
for some $l$, so $x_{l+1}\in L\left(x_{l}\right)$ --- the transcendental
element $x_{l}$ has a $p$ root in $L\left(x_{l}\right)$ --- this
is a contradiction. 
\end{proof}
Another easy fact:
\begin{fact}
\label{cla:SillyClaim}Let $R$ be an integral domain, $F$ its field
of fractions. $\alpha_{1},\ldots,\alpha_{n}$ elements algebraic over
$F$ such that: 
\begin{itemize}
\item The minimal \emph{monic} polynomial of $\alpha_{1}$ over $F$, $m_{1}\left(X_{1}\right)$,
belongs to $R\left[X_{1}\right]$. 
\item The minimal \emph{monic} polynomial of $\alpha_{2}$ over $F\left(\alpha_{1}\right)$,
$m_{2}\left(\alpha_{1},X_{2}\right)$, belongs to $R\left[\alpha_{1},X_{2}\right]$. 
\item And so on. 
\end{itemize}
Then $R\left[\alpha_{1},\ldots,\alpha_{n}\right]=R\left[X_{1},\ldots,X_{n}\right]/\left(m_{1},m_{2},\ldots,m_{n}\right)$.
In particular, $\left(m_{1},\ldots,m_{n}\right)$ is prime. 
\end{fact}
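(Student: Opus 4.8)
The plan is to prove the statement by induction on $n$, with the case $n=1$ doing all the real work and then being applied repeatedly over a growing ground ring.

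For $n=1$: consider the $R$-algebra homomorphism $\varepsilon\colon R[X_1]\to R[\alpha_1]$ sending $X_1\mapsto\alpha_1$; it is surjective, so it suffices to show $\ker\varepsilon=(m_1)$. The inclusion $(m_1)\subseteq\ker\varepsilon$ is clear. Conversely, given $g\in\ker\varepsilon$, divide $g$ by $m_1$: since $m_1$ is \emph{monic} with coefficients in $R$, the Euclidean division algorithm is available inside $R[X_1]$ and produces $g=q\,m_1+s$ with $q,s\in R[X_1]$ and $\deg s<\deg m_1$. Reading this identity in $F[X_1]$ and using that $m_1$ is the minimal polynomial of $\alpha_1$ over $F$ (hence $m_1\mid g$ in $F[X_1]$), uniqueness of quotient and remainder in $F[X_1]$ forces $s=0$. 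Thus $g=q\,m_1\in(m_1)$, so $R[\alpha_1]\cong R[X_1]/(m_1)$; and since $R[\alpha_1]$ is a subring of the field $F(\alpha_1)$, it is an integral domain, whence $(m_1)$ is prime.

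For the inductive step, put $R'=R[\alpha_1,\dots,\alpha_{n-1}]$. As a subring of the field $F(\alpha_1,\dots,\alpha_{n-1})$ it is an integral domain with that field as its field of fractions, and by hypothesis the minimal monic polynomial $m_n$ of $\alpha_n$ over $F(\alpha_1,\dots,\alpha_{n-1})$ has coefficients in $R'$. So the already-proved case $n=1$, applied with $R'$ in place of $R$, gives $R'[\alpha_n]\cong R'[X_n]/(m_n)$ with this ideal prime. Splicing in the inductive presentation $R'\cong R[X_1,\dots,X_{n-1}]/(m_1,\dots,m_{n-1})$, and using $R'[X_n]\cong R[X_1,\dots,X_n]/(m_1,\dots,m_{n-1})$, one chooses a lift of $m_n$ to a polynomial in $R[X_1,\dots,X_n]$ (well defined modulo $(m_1,\dots,m_{n-1})$, so irrelevant to the ideal produced) and obtains
\[
R[\alpha_1,\dots,\alpha_n]=R'[\alpha_n]\cong R'[X_n]/(m_n)\cong R[X_1,\dots,X_n]/(m_1,\dots,m_n).
\]
Since the last ring is isomorphic to a subring of a field it is a domain, so $(m_1,\dots,m_n)$ is prime, completing the induction.

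The one genuinely delicate point is the division-algorithm step over the possibly-non-field ring $R$: monicity of $m_1$ is precisely what makes division inside $R[X_1]$ legitimate, after which uniqueness is imported from $F[X_1]$. Everything else is bookkeeping — checking that the choice of lift of $m_n$ only changes it by an element of $(m_1,\dots,m_{n-1})$, and that the iterated quotient collapses correctly to $R[X_1,\dots,X_n]/(m_1,\dots,m_n)$.
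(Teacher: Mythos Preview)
Your proof is correct. The paper states this result as a \emph{Fact} without proof, so there is no argument to compare against; your induction on $n$ with the monic-division step for the base case is the standard proof and is carried out cleanly.
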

And here is the main technical lemma:
\begin{lem}
\label{lem:MainLemmaWithPf}(an expanded version of \cite[The third lemma]{prohleEndomorphism})
Let $r$ be a prime number or $0$, $p$ a prime number different
from $r$ and let $\left\{ p_{0},\ldots,p_{n-1}\right\} $ be a set
of pairwise distinct primes, different from $p,r$. Let $F$ be a
field of characteristic $r$ which contains all roots of unity. For
$k<n$, let $V_{k}$ be some set such that $k\neq l\Rightarrow V_{k}\cap V_{l}=\emptyset$,
and let $V=\bigcup_{k<n}V_{k}$.

For each $v\in V$, let $T_{v}\in F\left[X\right]$ be polynomials
such that: 
\begin{itemize}
\item none of them is constant.
\item none of them is divisible by $X$. 
\item they are separable polynomials. 
\item they are pairwise relatively prime (i.e. no nontrivial common divisor).
\end{itemize}
Suppose that $K=K_{\left\langle T_{v}\left|\, v\in V\right.\right\rangle }$
is an extension of $F$ generated by the set \\
 $\left\{ z_{i}\left|\, i<\omega\right.\right\} \cup\left\{ t_{i}^{v}\left|\, v\in V,i<\omega\right.\right\} $
from the algebraic closure of $F\left(z_{0}\right)$ where: 
\begin{itemize}
\item $z_{0}$ is transcendental over $F$. 
\item $\left(z_{i+1}\right)^{p}=z_{i}$ for all $i<\omega$. 
\item $t_{0}^{v}=T_{v}\left(z_{0}\right)$. 
\item if $v\in V_{k}$ then $\left(t_{i+1}^{v}\right)^{p_{k}}=t_{i}^{v}$. 
\end{itemize}
Let $1\leq j\leq\omega$, and $\rho:V\to\left(\omega+1\backslash\left\{ 0\right\} \right)$.
Denote the subfield \\
 $F\left(z_{i},t_{l_{v}}^{v}\left|\, i<j,l_{v}<\rho\left(v\right),v\in V\right.\right)$
of $K$ by $F\left(j,\rho\right)=F\left(j,\rho\right)_{\left\langle T_{v}\left|\, v\in V\right.\right\rangle }$. 

Then we have the following properties: 
\begin{enumerate}
\item \label{enu:x^p-z_iPf}The polynomial $X^{p}-z_{j-1}$ is irreducible
over $F\left(j,\rho\right)$ for every $\rho$ and $1\leq j$. 
\item \label{enu:x^q-t_iPf}If $w\in V_{k}$ then the polynomial $X^{p_{k}}-t_{\rho\left(w\right)-1}^{w}$
is irreducible over $F\left(j,\rho\right)$ for all $\rho,j$ such
that $\rho\left(w\right)<\omega$. 
\item \label{enu:NormalFormPf}If $k<n$ and the $\left(p_{k}\right)^{m}$-th
power ($1\leq m$) of an element of $F\left(j,\rho\right)$ belongs
to the subfield $F\left(z_{l}\right)$ where $l<j\leq\omega$ then
this element can be written in the form 
\[
c\cdot\frac{f\left(z_{i}\right)}{g\left(z_{i}\right)}\prod_{v\in W_{k}}\left(t_{r_{v}}^{v}\right)^{l_{v}}
\]
 for some $c\in F$, $f$ and $g$ are relatively prime monic polynomials
over $F$, $i\leq l$, $W_{k}$ is a finite subset of $V_{k}$ where
$v\in W_{k}\Rightarrow1\leq r_{v}<\rho\left(v\right)$, $r_{v}\leq m$,
and $0<l_{v}<\left(p_{k}\right)^{r_{v}}$. 
\item \label{enu:transExtenPf}$F$ is relatively algebraically closed in
$K$.
\item \label{enu:An-equivalent-definitionPf}An equivalent definition of
$K$ ($F\left(j,\rho\right)$) is the following one: Suppose $F$
is the field of fractions of an integral domain $S$. Then $K$ ($F\left(j,\rho\right)$)
is the field of fractions of the integral domain $R/I$ where \\
 $R=S\left[Y_{i},S_{l}^{v}\left|\, i,l<\omega,v\in V\right.\right]$
($R=S\left[Y_{i},S_{l_{v}}^{v}\left|\, i<j,l_{v}<\rho\left(v\right),v\in V\right.\right]$)
(i.e. this is a polynomial ring) and $I\leq R$ is the ideal generated
by the equations:

\begin{enumerate}
\item $Y_{i+1}^{p}=Y_{i}$ for $i<\omega$ ($i<j$) 
\item $Y_{0}^{v}=T_{v}\left(Y_{0}\right)$ for $v\in V$. 
\item If $v\in V_{k}$ for $k<n$, $\left(S_{l+1}^{v}\right)^{p_{k}}=S_{l}^{v}$
for $l<\omega$ ($l<\rho\left(v\right)$). 
\end{enumerate}
\item \label{enu:s-highPf}Each $q\mbox{-high}$ element of $K$ belongs
to $F$ whenever $q$ is a prime different from $p$ and $\left\langle p_{k}\left|\, k<n\right.\right\rangle $. 
\item \label{enu:p-highPf}Each $\phigh$ element of $K$ is of the form
$c\cdot\left(z_{i}\right)^{m}$, where $c$ is a $\phigh$ element
of $F$, $i<\omega$ and $m$ is an integer. 
\item \label{enu:noMoreRootsPf}If $p'$ is a prime different from $p$
then $z_{0}$ does not have a $p'$ root. 
\item \label{enu:CardinalityPf}If $V$ is finite then $\left|K\right|\leq\left|F^{\left\langle <\omega\right\rangle }\right|$.
Furthermore, the injection witnessing this is definable from the parameters
given when constructing $K$ (i.e. the function $v\mapsto T_{v}$,
etc). 
\item \label{enu:FAnyFieldPf}Clauses (\ref{enu:x^p-z_iPf})--(\ref{enu:CardinalityPf})
except clause (\ref{enu:NormalFormPf}) are true for \emph{any} field
$F$ of characteristic $r$. 
\end{enumerate}
\end{lem}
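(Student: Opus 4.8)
The plan is to treat the clauses in a carefully chosen order, since many of the later, more "analytic" statements (the high-element descriptions) are consequences of a single structural fact: the behaviour of radical extensions governed by Kummer theory (Lemma \ref{lem:Kummer}, Lemma \ref{lem:dimOfExtension}, Abel's Theorem \ref{lem:Abel-Theorem}). First I would establish the irreducibility clauses (\ref{enu:x^p-z_iPf}) and (\ref{enu:x^q-t_iPf}). The heart here is to show that in the finitely-generated subfield $F(j,\rho)$, the element $z_{j-1}$ (resp. $t^w_{\rho(w)-1}$) has no $p$-th root (resp. $p_k$-th root), so Abel's Theorem applies. The natural device is to realise $F(j,\rho)$ as a tower of simple radical extensions — first adjoin the $z_i$'s, then the $t^v_l$'s layer by layer — and at each step track the degree using Kummer theory: each step either is trivial or multiplies the degree by the relevant prime, and the primes $p, p_0,\dots,p_{n-1}$ being distinct keeps the prime-power structure of degrees transparent. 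The transcendence of $z_0$ over $F$ and the separability/coprimality hypotheses on the $T_v$ are exactly what guarantees the $t^v_0 = T_v(z_0)$ are "independent generic" elements, so adjoining their prime-power roots genuinely grows the field.

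Next I would prove the normal-form clause (\ref{enu:NormalFormPf}), which is the real engine. Given $\xi \in F(j,\rho)$ with $\xi^{(p_k)^m} \in F(z_l)$, the idea is to induct on the number of generators $t^v_{l_v}$ actually involved in a representation of $\xi$, peeling off one top-layer radical at a time and applying Lemma \ref{lem:Kummer} at each peel to force $\xi = c \cdot z_i^{?} \cdot \prod (t^v_{r_v})^{l_v}$ with the stated bounds on $r_v$ and $l_v$; the restriction $\xi^{(p_k)^m}\in F(z_l)$ is what kills any contribution from the $z$-roots of index $>l$ and from $t^{v}$ with $v \notin V_k$ (here the distinctness of the $p_j$'s is used again, via Lemma \ref{lem:dimOfExtension}). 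Clauses (\ref{enu:s-highPf}) and (\ref{enu:p-highPf}) then follow: a $q$-high element lies in every $F(j,\rho)$ cofinally, and (\ref{enu:x^p-z_iPf})/(\ref{enu:x^q-t_iPf}) show it cannot use any radical generator nontrivially, so it sits in $F$ (for $q \ne p, p_k$) or reduces to $c z_i^m$ via (\ref{enu:NormalFormPf}) (for $q = p$, using Lemma \ref{lem:PurelyTrans} to handle the $z$-tower). Clause (\ref{enu:transExtenPf}) is a direct corollary of (\ref{enu:x^p-z_iPf}): if $\alpha \in K \setminus F$ were algebraic over $F$ it would lie in some $F(j,\rho)$, and one runs the $L(x_l)=L(x_{l+1})$ argument of Lemma \ref{lem:PurelyTrans}. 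Clause (\ref{enu:noMoreRootsPf}) is immediate from (\ref{enu:x^p-z_iPf}) with $j=1$, $\rho$ arbitrary.

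For clause (\ref{enu:An-equivalent-definitionPf}) I would invoke Fact \ref{cla:SillyClaim}: the generators $z_i$ and $t^v_l$ are obtained by a tower of simple algebraic extensions whose successive minimal monic polynomials ($X^p - z_{i-1}$, $X - T_v(Y_0)$, $X^{p_k} - S^v_{l-1}$) are exactly the generators of $I$ and have coefficients in the appropriate subring, so $R/I$ is the ring they generate and hence an integral domain; passing to fractions gives $K$ (or $F(j,\rho)$). Clause (\ref{enu:CardinalityPf}) is the bookkeeping step: when $V$ is finite, every element of $K$ lies in some $F(j,\rho)$ with $\rho$ finite-valued, hence in an $F$-algebraic extension of $F(z_0,\dots)$ of bounded transcendence degree $1$ over a finitely-generated radical tower; enumerating such elements by (finite tuples over $F$ encoding) their coordinates in a fixed basis gives an injection into $F^{\langle <\omega\rangle}$, and since every choice made (the basis, the minimal polynomials) is determined by the data $v \mapsto T_v$ etc., the injection is definable from those parameters. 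Finally clause (\ref{enu:FAnyFieldPf}) is handled by observing that, except in (\ref{enu:NormalFormPf}) where "all roots of unity in $F$" was genuinely used (via Kummer theory), every other argument either used only Abel's Theorem — which needs no roots of unity — or can be run after base-changing to $F(\mu_\infty)$ and noting the conclusions descend, because the relevant sets ($q$-high elements, $p$-high elements, roots of $z_0$) are unchanged by the roots-of-unity adjunction since $\gcd$ considerations keep $[F(\mu_\infty):F]$ coprime to $p$ and the $p_k$. \emph{The main obstacle} I anticipate is clause (\ref{enu:NormalFormPf}): getting the precise bounds $r_v \le m$, $0 < l_v < (p_k)^{r_v}$ out of the Kummer-peeling induction requires carefully choosing which radical to strip first (always one of maximal index) and being disciplined about what "appears nontrivially" means when several $t^v$ with $v$ in the same $V_k$ are present — this is where a sloppy induction would silently lose the uniformity that clause (\ref{enu:CardinalityPf}) and the applications in Section \ref{sec:GraphsToFields} depend on.
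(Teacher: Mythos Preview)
Your overall architecture is close to the paper's, but there is a genuine gap at clause~(\ref{enu:p-highPf}), the description of $p$-high elements. You write that this ``reduces to $c z_i^m$ via (\ref{enu:NormalFormPf})'', but clause~(\ref{enu:NormalFormPf}) only controls elements whose $(p_k)^m$-th power lands in $F(z_l)$ for one of the \emph{other} primes $p_k$; it says nothing about $p$-th powers, and a $p$-high element $x$ has no reason to satisfy $x^{(p_k)^m}\in F(z_l)$. In the paper this clause is the longest and most delicate: one inducts on $|\rho|$, and for the step from $L=F(\omega,\rho)$ to $K'=F(\omega,\rho^+)$ the key device is the \emph{norm} $N:K'\to L$ of this degree-$q$ extension. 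One computes $N(y)=1$ for $y=x^q/N(x)$, and a Kummer calculation combined with (\ref{enu:x^p-z_iPf}) and (\ref{enu:transExtenPf}) forces $y$ to be $p$-high already in a finite layer, hence in $F$ by Lemma~\ref{lem:PurelyTrans}; this pushes $x^q$ down into $L$ and the induction applies. Your sketch contains no trace of this mechanism, and I do not see how to avoid it.

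There are two smaller errors. Clause~(\ref{enu:noMoreRootsPf}) is not ``immediate from (\ref{enu:x^p-z_iPf}) with $j=1$'': clause~(\ref{enu:x^p-z_iPf}) says $z_0$ has no $p$-th root, whereas (\ref{enu:noMoreRootsPf}) concerns $p'$-th roots for $p'\neq p$; the paper handles $p'=p_k$ via (\ref{enu:NormalFormPf}) and other $p'$ via Lemma~\ref{lem:dimOfExtension}. And your descent for (\ref{enu:FAnyFieldPf}) asserts $[F(\mu_\infty):F]$ is coprime to $p$ and the $p_k$, which is false in general; the paper instead passes to $\bar F$ and uses (\ref{enu:transExtenPf}) to get $K\cap\bar F=F$. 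Finally, note that the paper proves (\ref{enu:x^q-t_iPf}) and (\ref{enu:NormalFormPf}) by a \emph{single coupled induction} on $|\rho|$ --- the normal form at level $\rho$ is exactly what shows $t^w_{\rho(w)}\notin F(1,\rho)$ --- so your plan to establish (\ref{enu:x^p-z_iPf}),(\ref{enu:x^q-t_iPf}) first and (\ref{enu:NormalFormPf}) afterwards would need its own argument for why the successive radical adjunctions are nontrivial.
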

\begin{proof}
This proof is an adaptation of \cite[Third Lemma]{prohleEndomorphism}.
There it is dealt with just adding one $q$ root to $T_{v}$, while
we deal with infinite such roots. The difference between the proofs
is not large.

Let us assume that $n=1$. i.e. there is only one prime different
from $p,r$ and denote it by $q$. The proof is the essentially the
same if $n>1$, but involves more indices, and after reading the proof
for this case, the general case should be easy. Throughout the proof,
let $\rho:V\to\left(\omega+1\backslash\left\{ 0\right\} \right)$,
$\supp\left(\rho\right)=\left\{ v\left|\,\rho\left(v\right)\neq1\right.\right\} $
and $\left|\rho\right|=\sum\left\{ \rho\left(v\right)-1\left|v\in\supp\left(\rho\right),\rho\left(v\right)<\omega\right.\right\} $.
When we say that $\supp\left(\rho\right)$ is finite, we also mean
that $\left|\rho\right|$ is finite, and $\rho\left[V\right]\subseteq\omega$.

First let us note that it is enough to prove (\ref{enu:x^p-z_iPf}),
(\ref{enu:x^q-t_iPf}) and (\ref{enu:NormalFormPf}) for finite $\supp\left(\rho\right)$
and $j$. In addition we may assume for these clauses that $j=1$: 

Suppose $i<\omega$, and let $S_{v}^{i}\in F\left[X\right]$ be $S_{v}^{i}\left(X\right)=T_{v}\left(X^{p^{i}}\right)$
for $v\in V$. Then $\left\{ S_{v}^{i}\left|\, v\in V\right.\right\} $
satisfy the conditions of the lemma. 

Note that for finite $j$ and $\rho$ with finite $\supp\left(\rho\right)$,
$F\left(j,\rho\right)_{\left\langle T_{v}\left|\, v\in V\right.\right\rangle }\cong F\left(1,\rho\right)_{\left\langle S_{v}^{j-1}\left|\, v\in V\right.\right\rangle }$
(taking $z_{j-1}$ to $z_{0}$ and $T_{v}$ to $S_{v}^{j-1}$). Hence
if we know (\ref{enu:x^p-z_iPf}) and (\ref{enu:x^q-t_iPf}) for the
case $j=1$, then they are true for any $j$. Regarding (\ref{enu:NormalFormPf}),
we note that by Lemma \ref{lem:dimOfExtension}, if an element $x\in F\left(i+1,\rho\right)$
satisfies $x^{q^{m}}\in F\left(i,\rho\right)$ then $x$ belongs to
$F\left(i,\rho\right)$. Hence we may assume $j=l+1$, and after applying
the isomorphism above --- $j=1$.

So let us begin:

First we prove (\ref{enu:x^q-t_iPf}) and (\ref{enu:NormalFormPf}).
We prove this by induction on $\left|\rho\right|$. For $\left|\rho\right|=0$,
$F\left(1,\rho\right)=F\left(z_{0}\right)$ is just the quotient field
of the polynomial ring $F\left[z_{0}\right]$, therefore (\ref{enu:NormalFormPf})
is true in that case. Now we prove that if (\ref{enu:NormalFormPf})
is true for $\rho$ then (\ref{enu:x^q-t_iPf}) is true as well. So,
in order to prove (\ref{enu:x^q-t_iPf}), it is enough, by Abel's
Theorem (Lemma \ref{lem:Abel-Theorem}), to prove that $t_{\rho\left(w\right)}^{w}\notin F\left(1,\rho\right)$.
If this is not the case, then, by (\ref{enu:NormalFormPf}), we get
an equation of the form: 
\[
g\left(z_{0}\right)\cdot t_{\rho\left(w\right)}^{w}=cf\left(z_{0}\right)\prod_{v\in W}\left(t_{r_{v}}^{v}\right)^{l_{v}}
\]
 For some finite $W\subseteq V$, $1\leq r_{v}\leq\rho\left(w\right)$,
$0<l_{v}<q^{r_{v}}$, $r_{v}<\rho\left(v\right)$ for $v\in W$. After
raising both sides of the equation to the power of $q$, $\rho\left(w\right)$
times, we get an equation of the form
\[
g^{q^{\rho\left(w\right)}}\cdot T_{w}=c^{q^{\rho\left(w\right)}}f^{q^{\rho\left(w\right)}}\prod_{v\in W}\left(T_{v}^{q^{\rho\left(w\right)-r_{v}}}\right)^{l_{v}}
\]
 By the conditions on the polynomials $T_{v}$, $g=f=1$, and we get
a contradiction (because we get $W=\left\{ w\right\} $ and $r_{w}=\rho\left(w\right)$).

Now the induction step for (\ref{enu:NormalFormPf}). Suppose $b\in F\left(1,\rho\right)$
and $b^{q^{m}}\in F\left(z_{0}\right)$ (assume $m>0$), and let $v\in V$
be such that $\rho\left(v\right)>1$.

Define $\rho'$ by: 
\begin{itemize}
\item $\rho'\left(v\right)=\rho\left(v\right)-m$ (unless $\rho\left(v\right)-m<1$
and then $\rho'\left(v\right)=1$). 
\item $\rho'\left(w\right)=\rho\left(w\right)$ for $w\neq v$. 
\end{itemize}
So $F\left(1,\rho'\right)\left(t_{\rho\left(v\right)-1}^{v}\right)=F\left(1,\rho\right)$.
Now, $b\in F\left(1,\rho'\right)\left(t_{\rho\left(v\right)-1}^{v}\right)$,
$b^{q^{m}}\in F\left(z_{0}\right)\subseteq L:=F\left(1,\rho'\right)$
and $\left(t_{\rho\left(v\right)-1}^{v}\right)^{q^{m}}\in L$. By
Kummer Theory (Lemma \ref{lem:Kummer}), we have $b=c\cdot\left(t_{\rho\left(v\right)-1}^{v}\right)^{l}$
for some $c\in L$, $0\leq l<q^{m}$. Note that if $q\mid l$ then
we are done by induction, so assume $q\nmid l$.

If $\rho\left(v\right)-1\leq m$ the we are done: it follows that
$c^{q^{m}}\in F\left(z_{0}\right)$ and by the induction hypothesis
we know $c$ can be represented in the right form (and $t_{v}$ does
not appear there, as $\rho'\left(v\right)=1$). So assume $\rho\left(v\right)-1>m$
.

Surely, $c^{q^{\rho\left(v\right)-1}}\in F\left(z_{0}\right)$, so
by the induction hypothesis (recall $c\in L$), $c$ can be written
in the form 
\[
d\cdot\frac{f\left(z_{0}\right)}{g\left(z_{0}\right)}\prod_{u\in W}\left(t_{r_{u}}^{u}\right)^{l_{u}}
\]
where $d\in F$, $W\subseteq V$ and finite, and $1\leq r_{u}<\rho'\left(u\right),r_{u}\leq\rho\left(v\right)-1$
for $u\in W$ (hence, of course, $W\subseteq\supp\left(\rho'\right)$).
By this representation of $c$, $c^{q^{m}}\in F\left(1,\rho''\right)$
where $\rho''\left(w\right)=\rho'\left(w\right)-m$ for all $w\in V$
(and again, if $\rho'\left(w\right)-m<1$, $\rho''\left(w\right)=1$).
Since $b^{q^{m}}\in F\left(z_{0}\right)$, $\left(t_{\rho\left(v\right)-1}^{v}\right)^{l\cdot q^{m}}=\left(t_{\rho\left(v\right)-m-1}^{v}\right)^{l}\in F\left(1,\rho''\right)$.
Since $q\nmid l$, $\left(l,q^{\rho\left(v\right)-m-1}\right)=1$
so $t_{\rho\left(v\right)-m-1}^{v}\in F\left(1,\rho''\right)$. But
$\rho''\left(v\right)\leq\rho'\left(v\right)-1=\rho\left(v\right)-m-1$,
and we get a contradiction to (\ref{enu:x^q-t_iPf}) (because it follows
that $t_{\rho''\left(v\right)}^{v}\in F\left(1,\rho''\right)$). 

So (\ref{enu:x^q-t_iPf}) and (\ref{enu:NormalFormPf}) are proven.

Now we prove (\ref{enu:x^p-z_iPf}) for $j=1$ and finite $\left|\rho\right|$
by induction on $\left|\rho\right|$. By Abel's Theorem it is enough
to prove that $z_{1}\notin F\left(1,\rho\right)$. For $\left|\rho\right|=0$,
it is clear. The induction step follows from \ref{lem:dimOfExtension}.

Next we prove (\ref{enu:transExtenPf}). Again we assume that $\left|\rho\right|$
is finite. Let $x$ be an algebraic element of $K$ over $F$. Let
$L=F\left(x\right)$. The element $z_{0}$ is transcendental over
$L$, since $x$ is algebraic. All the other conditions of the lemma
are also satisfied with respect to $L$ instead of $F$. Let $v\in V$,
and $\rho^{+}:V\to\omega+1$ defined by $\rho^{+}\left(v\right)=\rho\left(v\right)+1$
and for $w\neq v$, $\rho^{+}\left(w\right)=\rho\left(w\right)$.
Suppose $x\in F\left(\omega,\rho^{+}\right)\backslash F\left(\omega,\rho\right)$.
Then $F\left(\omega,\rho\right)\left(x\right)=F\left(\omega,\rho^{+}\right)$
(because $\left[F\left(\omega,\rho^{+}\right):F\left(\omega,\rho\right)\right]=q$
--- a prime --- by (\ref{enu:x^q-t_iPf})) and in particular $t_{\rho\left(v\right)}^{v}\in F\left(\omega,\rho\right)\left(x\right)\subseteq L\left(\omega,\rho\right)$
--- a contradiction. So inductively we get $x\in F\left(\omega,1\right)$
(where $1$ is the constant sequence). Hence, $x\in F\left(z_{i}\right)$,
so $x\in F$.

Next we prove (\ref{enu:An-equivalent-definitionPf}). Denote by $S\left(j,\rho\right)$
and $I\left(j,\rho\right)$ the ring $R$ and ideal $I$ mentioned
in (\ref{enu:An-equivalent-definitionPf}). We shall show that $S\left(j,\rho\right)/I\left(j,\rho\right)$
is naturally embedded in $K$. It is enough as the field of fractions
contains all of $F\left(j,\rho\right)$'s generators.

It is enough to show this for finite $j,\left|\rho\right|$. Let $R'=S\left[z_{0}\right]\cong S\left[Y_{0}\right]$.
By (\ref{enu:x^p-z_iPf}) and (\ref{enu:x^q-t_iPf}) we can use \ref{cla:SillyClaim}
and we have 
\begin{eqnarray*}
K & \supseteq & R'\left[z_{i},t_{l_{v}}^{v}\left|\, i<j,l_{v}<\rho\left(v\right)\right.\right]\cong\\
 &  & S\left[Y_{0}\right]\left[Y_{i},S_{l_{v}}^{v}\left|\,1<i<j,l_{v}<\rho\left(v\right)\right.\right]/I\left(j,\rho\right)=S\left(j,\rho\right)/I\left(j,\rho\right)
\end{eqnarray*}
 As desired.

Next we prove (\ref{enu:s-highPf}). Suppose $x$ is $q\mbox{-high}$
in $K$. So $x\in F\left(i,\rho\right)$ for some $i<\omega$ and
finite $\left|\rho\right|$. By Lemma \ref{lem:dimOfExtension}, $x$
is $q\mbox{-high}$ already in $F\left(i,\rho\right)$. Now apply
(\ref{enu:transExtenPf}) and Lemma \ref{lem:PurelyTrans}.

Next we prove (\ref{enu:p-highPf}). If $x\in F\left(\omega,\rho\right)$
is $\phigh$, then by Lemma \ref{lem:dimOfExtension}, $x$ is already
$\phigh$ in $F\left(\omega,\rho_{0}\right)$ where $\left|\rho_{0}\right|$
is finite. So it is enough to prove by induction on $\left|\rho\right|$
that the $\phigh$ elements of $F\left(\omega,\rho\right)$ are of
the form $c\cdot z_{i}^{m}$. 

\uline{The induction base}: Suppose $x\in F\left(\omega,1\right)$. 

If $x$ is already $\phigh$ in $F\left(z_{i}\right)$ for some $i$,
then by \ref{lem:PurelyTrans} $x\in F$.

Suppose now that $x$ is not $\phigh$ in any finite stage. Let $i<\omega$
be such that $x\in F\left(z_{i}\right)$, so there are relatively
prime polynomials $f_{0},g_{0}\in F\left[z_{i}\right]$, none of them
divisible by $z_{i}$, such that $x=\left(z_{i}\right)^{l_{0}}\frac{f_{0}\left(z_{i}\right)}{g_{0}\left(z_{i}\right)}$
for some $l_{0}\in\mathbb{Z}$. So it is enough to show that $u=x/\left(z_{i}\right)^{l_{0}}$
is $\phigh$ already in $F\left(z_{i}\right)$. So suppose not. Let
$X_{m}=\left\{ y\in F\left(\omega,1\right)\left|\, y^{p^{m}}=u\right.\right\} $.
Let $j<\omega$ be the first such that $X_{j}\subseteq F\left(z_{i}\right),X_{j+1}\nsubseteq F\left(z_{i}\right)$.
Let $s$ be the least natural number for which $X_{j+1}\subseteq F\left(z_{s}\right)$
($s>i$). Suppose $v\in X_{j+1}$, and let $v'=v^{p}\in X_{j}\subseteq F\left(z_{i}\right)$.
So $v'=\left(z_{i}\right)^{l_{1}}\frac{f_{1}\left(z_{i}\right)}{g_{1}\left(z_{i}\right)}$
where $f_{1},g_{1}$ are relatively prime, neither of them divisible
by $z_{i}$, $l_{1}\in\mathbb{Z}$. Since $\left(v'\right)^{p^{j}}=u$,
$l_{1}=0$. As $v^{p}\in F\left(z_{i}\right)$, by \ref{lem:Kummer},
we can write $v=\left(z_{s}\right)^{m}\cdot d$ for some $d\in F\left(z_{i}\right)$,
$m<\omega$, $p\nmid m$ (as $v\notin F\left(z_{s'}\right)$ for $s'<s$).
Denote $d=\left(z_{i}\right)^{l_{2}}\frac{f_{2}\left(z_{i}\right)}{g_{2}\left(z_{i}\right)}$
where $f_{2},g_{2}\in F\left[z_{i}\right]$ are relatively prime,
none of them divisible by $z_{i}$, $l_{2}\in\mathbb{Z}$. Since $v^{p}=v'$,
we have 
\[
\left(z_{s-1}\right)^{m}\cdot\left(z_{i}\right)^{l_{2}p}\left(\frac{f_{2}\left(z_{i}\right)}{g_{2}\left(z_{i}\right)}\right)^{p}=\left(\frac{f_{1}\left(z_{i}\right)}{g_{1}\left(z_{i}\right)}\right)
\]
and after raising to the power of $p$, $s-1-i$ times, we get $p^{s-i}l_{2}+m=0$,
so $p\mid m$ --- a contradiction.

\uline{The induction step}: Suppose we have $\rho^{+}$ and $\rho$
as before (i.e. $\rho^{+}\left(v\right)=\rho\left(v\right)+1$ for
some $v\in V$ and $\rho^{+}\left(w\right)=\rho\left(w\right)$ for
$w\neq v$) and $x\in F\left(\omega,\rho^{+}\right)$ is $\phigh$
there. Let $K'=F\left(\omega,\rho^{+}\right)$ and $L=F\left(\omega,\rho\right)$.
By (\ref{enu:x^q-t_iPf}), the degree of the extension $K'/L$ is
$q$. 

Denote by $N:K'\to L$ the norm of the extension. We use the following
properties of the norm: 
\begin{itemize}
\item Its multiplicative, and $N\left(a\right)=a^{q}$ for $a\in L$. 
\item If $K_{i}=F\left(i,\rho^{+}\right)$ and $L_{i}=F\left(i,\rho\right)$
then $N\upharpoonright K_{i}=N_{K_{i}}:K_{i}\to L_{i}$. 
\end{itemize}
$N\left(x\right)$ is $\phigh$ in $L$. So $y=x^{q}/N\left(x\right)$
is $\phigh$ in $K'$. Choose $i<\omega$ such that $x,y\in F\left(i+1,\rho^{+}\right)$.
We shall show that $y$ is $\phigh$ in $F\left(i+1,\rho^{+}\right)$.
Suppose that $u\in F\left(\omega,\rho^{+}\right)\backslash F\left(i+1,\rho^{+}\right)$
satisfies $u^{p}\in F\left(i+1,\rho^{+}\right)$ and $y$ is a $p^{m}$
power of $u$ for some $m<\omega$. Let $k=\max\left\{ n\left|\, u\notin F\left(n+1,\rho^{+}\right)\right.\right\} \geq i$
. By Lemma \ref{lem:Kummer}, as $u\in F\left(k+2,\rho^{+}\right)$
and $u^{p}\in F\left(k+1,\rho^{+}\right)$, we have $u=h\cdot\left(z_{k+1}\right)^{b}$
where $h\in F\left(k+1,\rho^{+}\right)$ and $0<b<p$. Hence $N\left(u\right)=N\left(h\right)\cdot N\left(z_{k+1}\right)^{b}=N\left(h\right)\cdot\left(z_{k+1}\right)^{bq}$.
Now, $N\left(h\right)\in F\left(k+1,\rho\right)$, so $N\left(u\right)\notin F\left(k+1,\rho\right)$
because by (\ref{enu:x^p-z_iPf}) $z_{k+1}\notin F\left(k+1,\rho\right)$
and $\left(p,bq\right)=1$. On the other hand, $N\left(y\right)=N\left(u\right)^{p^{m}}$
and $N\left(y\right)=N\left(x^{q}/N\left(x\right)\right)=N\left(x\right)^{q}/N\left(x\right)^{q}=1$,
so $N\left(u\right)$ is algebraic over $F$, which is a contradiction
to (\ref{enu:transExtenPf}). 

By Lemma \ref{lem:PurelyTrans}, $y\in F$ and is $\phigh$ there,
therefore $y\cdot N\left(x\right)=x^{q}$ is $\phigh$ in $L$. By
the induction hypothesis, $x^{q}$ has the form $c\cdot\left(z_{i}\right)^{m}$,
hence $x^{q}\in F\left(z_{i}\right)$. By (\ref{enu:NormalFormPf}),
we get the equation:
\[
c\cdot\left(z_{i}\right)^{m}=x^{q}=d^{q}\left(\frac{f^{q}\left(z_{i}\right)}{g^{q}\left(z_{i}\right)}\right)\prod_{w\in W}\left(T_{w}\left(z_{0}\right)\right)^{l_{w}}
\]
 for some finite $W\subseteq V$, $0<l_{w}<q$. This implies $g=1$,
$q\mid m$, $f\left(z_{i}\right)=\left(z_{i}\right)^{m/q}$, and $W=\emptyset$.
Hence $x=\varepsilon\cdot\left(d\cdot\left(z_{i}\right)^{m/q}\right)$
where $\varepsilon^{q}=1$ (so $\varepsilon\in F$) as promised.

Clause (\ref{enu:noMoreRootsPf}) follows from the previous clauses:
if $x^{p'}=z_{0}$, then: if $p'=q$ then by (\ref{enu:NormalFormPf})
$x=c\cdot\frac{f\left(z_{0}\right)}{g\left(z_{0}\right)}\prod_{v\in W'}\left(t_{1}^{v}\right)^{l_{v}}$
($0<l_{v}<q$), so $z_{0}=x^{q}=c\cdot\left(\frac{f\left(z_{0}\right)}{g\left(z_{0}\right)}\right)^{q}\prod_{v\in W}\left(T_{v}\right)^{l_{v}}$,
so $W=\emptyset$, $g=1$, and we easily derive a contradiction. If
$p'\neq q$, use Lemma \ref{lem:dimOfExtension}.

Clause (\ref{enu:CardinalityPf}): one defines by induction on $\left|\rho\right|,n$
an injective function $\varphi_{n,\rho}:F\left(n,\rho\right)\to F^{\left\langle <\omega\right\rangle }$
such that $\varphi_{n,\rho}\subseteq\varphi_{n',\rho'}$ whenever
$n\leq n'$ and $\rho\leq\rho'$ (i.e. $\rho\left(v\right)\leq\rho'\left(v\right)$
for all $v\in V$). Why is this enough? we shall need:
\begin{prop*}
for all $1\leq m,n<\omega$, $\rho,\rho'\in^{V}\omega$, \\
$F\left(n,\rho\right)\cap F\left(m,\rho'\right)=F\left(\min\left(n,m\right),\min\left(\rho,\rho'\right)\right)$
where $\min\left(\rho,\rho'\right)\left(v\right)=\min\left(\rho\left(v\right),\rho'\left(v\right)\right)$. \end{prop*}
\begin{proof}
The proof is an argument similar to the one used to prove (\ref{enu:transExtenPf})
and Lemma \ref{lem:Intersection}.

Assume $x\in F\left(n,\rho\right)\cap F\left(m,\rho'\right)$. Assume
that $n,\left|\rho\right|$ is minimal with respect to $x\in F\left(n,\rho\right)$.
If $\left(n,\rho\right)\leq\left(m,\rho'\right)$ then we are done.
If not, suppose $m<n$ (the case where $\rho\not\leq\rho'$ is similar).
So $x\notin F\left(m,\rho\right)$. Since $x\in F\left(m,\rho'\right)$,
we can find $\rho\leq\rho_{1},\rho_{2}$ such that $\rho_{1}\left(v\right)=\rho_{2}\left(v\right)$
for all $v\neq v_{0}$ but $\rho_{2}\left(v_{0}\right)=\rho_{1}\left(v_{0}\right)+1$
and $x\in F\left(m,\rho_{2}\right)\backslash F\left(m,\rho_{1}\right)$
and then $F\left(m,\rho_{2}\right)=F\left(m,\rho_{1}\right)\left(x\right)$,
so also $F\left(n,\rho_{2}\right)=F\left(n,\rho_{1}\right)\left(x\right)$
but since $x\in F\left(n,\rho_{1}\right)$, we get that $F\left(n,\rho_{1}\right)=F\left(n,\rho_{2}\right)$
and this contradicts (\ref{enu:x^q-t_iPf}).
\end{proof}
By this proposition, $\bigcup\left\{ \varphi_{n,\rho}\left|\, n\in\omega,\rho\in^{V}\omega\right.\right\} $
will be an injective function from $K$ to $F^{\left\langle <\omega\right\rangle }$. 

For the construction, one should use the fact that we can represent
the sequence $\rho$ as a function from polynomials to $\omega$,
hence it has a code in $F^{\left\langle <\omega\right\rangle }$.
So the idea is that given $x$ with minimal $\left(n,\rho\right)$
such that $x\in F\left(n,\rho\right)$, code $x$ as $\left(n,\rho\right)$
and then for each choice of $\left(n',\rho'\right)$ such that $\left(n',\rho'\right)<\left(n,\rho\right)$
with difference exactly one (either $n'=n-1$ or $\rho'\left(v\right)=\rho\left(v\right)-1$
for some $v$), use the code we already have for $F\left(n',\rho'\right)$
and the representation of $x$ as linear combination of $\left(z_{n-1}\right)^{i}$,
$i<p$ or $\left(t_{\rho\left(v\right)-1}^{v}\right)^{i}$, $i<q$
. 

Now for clause (\ref{enu:FAnyFieldPf}):

Assume then, that $F$ is some field, not necessary containing any
roots of unity. Let $\bar{F}$ be its algebraic closure. The lemma
works for $\bar{F}$ because $z_{0}$ is transcendental over $F$
hence over $\bar{F}$ and the conditions on the polynomials $T_{v}$
still hold. Denote by $K'$ the field corresponding to it. So $K\subseteq K'$,
and for every $n,\rho$, $F\left(n,\rho\right)\subseteq\bar{F}\left(n,\rho\right)$.
(\ref{enu:x^p-z_iPf}) and (\ref{enu:x^q-t_iPf}) are clearly true
(for $F$) as they are true for $\bar{F}$.

Hence, (\ref{enu:transExtenPf}) is true as well: the proof uses only
(\ref{enu:x^q-t_iPf}). (\ref{enu:transExtenPf}) implies that $K\cap\bar{F}=F$,
and this allows us to prove all the other clauses, for example ---
(\ref{enu:p-highPf}) --- If $x$ is $\phigh$ in $K$ then it is
$\phigh$ in $K'$ hence it has the form $c\cdot\left(z_{i}\right)^{m}$
for $c\in\bar{F}$, but then $c\in\bar{F}\cap K=F$.

This completes the proof of this lemma. 
\end{proof}
\bibliographystyle{alpha}
\bibliography{common}

\end{document}